\documentclass[11pt,oneside,reqno,english]{amsart}
\usepackage{amsmath, amsfonts, latexsym, array, amssymb, amscd}
\usepackage{graphicx}
\usepackage[pagewise]{lineno}
\usepackage{color}
\usepackage{enumitem}
\usepackage{epstopdf}
\usepackage[top=1in, bottom=1in, left = 1.25in, right=1.25in, includefoot, headheight=13.6pt]{geometry}
\usepackage{hyperref}
\usepackage{mathtools}
\usepackage{mathrsfs}
\usepackage{parskip}
\usepackage{accents}
\usepackage[svgnames]{xcolor}

\numberwithin{equation}{section}
\numberwithin{figure}{section}

\newtheorem{theorem}{Theorem}[section]
\newtheorem*{theorem*}{Theorem}
\newtheorem{lemma}[theorem]{Lemma}

\newtheorem{proposition}[theorem]{Proposition}
\newtheorem*{proposition*}{Proposition}

\theoremstyle{definition}
\newtheorem{definition}[theorem]{Definition}

\newtheorem{remark}[theorem]{Remark}

\usepackage{indentfirst}
\setlength{\parindent}{1em}

\usepackage{ulem}

\setlength{\parskip}{0em}

\address{Department of Mathematics, The University of Chicago, Chicago, IL 60637, USA} 
\email{kihopark@math.uchicago.edu}

\address{Department of Mathematics,
The Ohio State University, Columbus, OH 43210, USA} 
\email{wang.7828@osu.edu}

\newcommand{\R}{\mathbb{R}}
\newcommand{\D}{\mathcal{D}}
\newcommand{\G}{\mathcal{G}}
\newcommand{\E}{\mathcal{E}}
\newcommand{\J}{\mathcal{J}}
\newcommand{\Z}{\mathbb{Z}}
\newcommand{\N}{\mathbb{N}}
\renewcommand{\P}{\mathcal{P}}

\def\wt{\widetilde}
\def\wh{\widehat}

\def\a{\alpha}
\def\b{\beta}
\def\d{\delta}
\def\g{\gamma}
\def\ep{\varepsilon}
\def\F{\mathcal{F}}
\def\W{\mathcal{W}}
\def\L{\mathcal{L}}

\def\C{\mathcal{C}}
\def\s{\sigma}

\def\k{\kappa}
\def\w{\mathsf{w}}
\def\v{\mathsf{v}}
\def\u{\mathsf{u}}
\def\RR{\mathsf{R}}

\def\lt{\lambda_{T}}

\def\M{\mathcal{M}}

\def\Sing{\mathrm{Sing}}
\def\Reg{\mathrm{Reg}}
\def\vp{\varphi}
\def\vpgeo{\varphi^{geo}}

\normalem

\title{Multifractal analysis of geodesic flows on surfaces without focal points}
\author{Kiho Park, Tianyu Wang \vspace{-2em}}
\date{\today}

\begin{document}

\maketitle
\begin{abstract}
We study multifractal spectra of the geodesic flows on rank 1 surfaces without focal points. We compute the entropy of the level sets for the Lyapunov exponents and estimate its Hausdorff dimension from below. In doing so, we employ and generalize results of Burns and Gelfert.
\end{abstract}

\section{introduction}
In this paper we study the multifractal information of the Lyapunov level sets with respect to the geodesic flow on surfaces with no focal points. In particular, we focus on estimating the topological entropy and the Hausdorff dimension of such level sets. Historically, similar types of problems have been studied in a greater generality for uniformly hyperbolic systems; see \cite{BD04} for flows, \cite{BS01,PW01} for some discrete time examples, and \cite{Pes97} for a systematic introduction. 

While there are some known results for one-dimensional \cite{JJOP10} and conformal systems \cite{Cli11}, in general much less is known regarding the multifractal analysis for non-uniformly hyperbolic systems. Higher dimensional generalization is more difficult because popular methods for estimating the Hausdorff dimension by slicing it with the stable and unstable leaves \cite{mccluskey1983hausdorff} and adding up the respective dimensions are no longer valid. Failure of such methods is due to the fact that the projection map along the holomony is often not bi-Lipschitz for non-uniformly hyperbolic systems, even when the stable and unstable leaves are absolutely continuous. 
%Moreover, the method used in the study of the Lyapunov level sets cannot be directly adapted to the study of the Birkhoff average of arbitrary continuous functions; see \cite{BI11, TV03} for related results. 

In a different setting, Burns and Gelfert \cite{burns2014lyapunov} studied the Lyapunov level sets with respect to the geodesic flow over rank 1 non-positively curved surfaces. For such surfaces, there exists a closed invariant subset of the unit tangent bundle, called the \emph{singular set}, on which the geodesic flow experiences no hyperbolicity. It is the presence of such a singular set that makes the geodesic flow non-uniformly hyperbolic.

In this paper, we extend the results of \cite{burns2014lyapunov} to rank 1 surfaces without focal points by employing similar techniques. Manifolds without focal points are natural generalizations of non-positively curved manifolds, and two classes of manifolds share many geometric features such as the presence of the singular set. On the other hand, there are certain properties that only hold for non-positively curved manifolds; see Remark \ref{rem: difference in setting} for instance.

Let $S$ be a closed surface equipped with a Riemannian metric such that there are no focal points (see Definition \ref{defn: no focal}), and $G=\{g_t\}_{t\in \mathbb{R}}$ be the geodesic flow on its unit tangent bundle $T^1S$. We will assume throughout the paper that the singular set is non-empty because the geodesic flow is uniformly hyperbolic in the absence of the singular set, and the multifractal analysis of the Lyapunov level sets is then well-understood.

The \textit{geometric potential} is defined by
$$\vpgeo(v):=-\lim\limits_{t \to 0} \frac{1}{t}\log\Big\|dg_t|_{E^u_v}\Big\|$$
and its scalar multiples $t\vpgeo$ are intimately related to the \textit{Lyapunov exponent} because the Lyapunov exponent $\chi(v)$ can be defined as the Birkhoff average of $-\vpgeo$ along the orbit of $v$. We say $v\in T^1S$ is \emph{Lyapunov regular} if its Lyapunov exponent $\chi(v)$ exists; see Definition \ref{defn: Lyap exp}. 
The main object of study in this paper is the \emph{Lyapunov level set} defined as
$$\mathcal{L}(\b):=\{v \in T^1S \colon v \text{ is Lyapuonv regular and } \chi(v) = \b\}.$$

We denote by $h(\L(\b))$ the topological entropy of $\L(\b)$, also known as the \emph{entropy spectrum}. 
Here we adopt Bowen's definition \cite{bowen1973topological} of the entropy for non-compact sets as the Lyapunov level sets are non-compact in general. We also denote by $\dim_H\mathcal{L}(\b)$ the Hausdorff dimension of $\L(\b)$.
The pressure function and its Legendre transform
\begin{equation}\label{eq: P and E}
\mathcal{P}(t):=P(t\varphi^{geo})
~\text{ and }~
\mathcal{E}(\alpha) := \inf\limits_{t\in \R}\Big( \mathcal{P}(t)-t\alpha\Big),
\end{equation}
play important roles in computing $h(\L(\b))$ and $\dim_H\mathcal{L}(\b)$. 
Setting
$$\alpha_1:=\lim_{t\rightarrow -\infty}D^{+}\mathcal{P}(t),$$
whose limit is well-defined due to the monotonicity and convexity of $\mathcal{P}$, our main result is as follows:
\begin{theorem}\label{thm: main}  
Let $S$ be a rank 1 Riemannian surface with no focal points. Then,
\begin{enumerate}
\item
The Lyapunov level set $\L(-\a)$ is non-empty if and only if $\alpha \in [\a_1,0]$. 
\item For $\alpha \in (a_1,0)$, we have
$$h(\mathcal{L}(-\alpha)) = \mathcal{E}(\alpha)$$
and
$$\dim_H\mathcal{L}(-\alpha) \geq 1+ 2 \cdot \frac{\mathcal{E}(\a)}{-\a}.$$
\end{enumerate}
\end{theorem}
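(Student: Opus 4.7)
The plan is to adapt the multifractal machinery of Burns--Gelfert \cite{burns2014lyapunov} to the no-focal-points setting and establish the three assertions in order.

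For part (1), since $\vpgeo \leq 0$ and $\chi(v)$ is the Birkhoff average of $-\vpgeo$, every Lyapunov-regular $v$ satisfies $\chi(v) \geq 0$, forcing $\alpha = -\chi(v) \leq 0$. The other endpoint is characterized by convex duality: $\alpha_1 = \lim_{t\to -\infty} D^+\mathcal{P}(t)$ equals the infimum of $\int\vpgeo\,d\mu$ over all flow-invariant probabilities. For each $\alpha$ in the closed interval $[\alpha_1,0]$, an invariant measure realising this average exists, and a Birkhoff-generic point of such a measure is Lyapunov-regular with $\chi=-\alpha$; conversely, an accumulation measure associated with a Lyapunov-regular $v$ places $\alpha$ in this interval.

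For the entropy formula in part (2), I would treat the upper and lower bounds separately. The upper bound $h(\L(-\alpha)) \leq \E(\alpha)$ is the standard pressure-distribution estimate: on $\L(-\alpha)$ we have $\tfrac{1}{n}S_n\vpgeo \to \alpha$, so for every $t\in\R$ one replaces $tS_n\vpgeo$ by $nt\alpha$ up to $o(n)$ in covering sums; Bowen's definition of the entropy for non-compact sets then gives $h(\L(-\alpha)) \leq \mathcal{P}(t) - t\alpha$, and infimizing over $t$ yields $\E(\alpha)$. For the matching lower bound, I would produce, for each $\alpha\in(\alpha_1,0)$, an ergodic equilibrium state $\mu_\alpha$ for $t_\alpha\vpgeo$ at a parameter $t_\alpha$ with $D^+\mathcal{P}(t_\alpha) = \alpha$, whose entropy is $\E(\alpha)$ and whose integral $\int\vpgeo\,d\mu_\alpha$ equals $\alpha$. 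Because $\alpha<0$, the measure $\mu_\alpha$ must give zero mass to $\Sing$ and hence be hyperbolic, so its generic points lie in $\L(-\alpha)$; the generic-point bound for Bowen's entropy then yields $h(\L(-\alpha))\geq \E(\alpha)$. Existence, uniqueness, and regularity of $\mu_\alpha$ rest on specification-type estimates for $t\vpgeo$ on the regular set that carry over from \cite{burns2014lyapunov}.

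For the Hausdorff dimension bound, the factor $1+2\E(\alpha)/(-\alpha)$ splits as a contribution of $1$ from the flow direction and $\E(\alpha)/(-\alpha)$ from each of the stable and unstable directions, consistent with the Young identity $\dim = h/|\chi|$ applied to the Lyapunov exponents $\pm\alpha$ of $\mu_\alpha$. On a Pesin block $\Lambda_N$ of large $\mu_\alpha$-measure with uniform local unstable and stable geometry, I would build a Moran-type Cantor family inside $\L(-\alpha)\cap\Lambda_N$ along the unstable and stable leaves and multiply by the flow interval; a Frostman-type mass estimate then converts the entropy-per-exponent count into the claimed lower bound. The main obstacle lies precisely here: as flagged in the introduction, stable/unstable holonomy on rank-1 no-focal-point surfaces need not be bi-Lipschitz, so one cannot literally add stable and unstable dimensions via the slicing method of \cite{mccluskey1983hausdorff}. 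My plan is to sidestep the holonomy map by constructing the product Cantor set directly inside a flow box through independent refinements along each leaf; this reduces the task to proving that on $\Lambda_N$ the Bowen balls are, up to uniform constants, genuine products of intervals of comparable length in the stable, unstable, and flow directions. Establishing the requisite uniform Jacobi-field estimates adapted to the no-focal-points hypothesis, in place of the non-positive curvature bounds used in \cite{burns2014lyapunov} (cf.\ Remark \ref{rem: difference in setting}), is the technical heart of the argument.
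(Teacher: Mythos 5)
Your proposal breaks down precisely at the phase transition, which is the crux of the problem. For the entropy lower bound you propose to pick, for each $\alpha \in (\alpha_1,0)$, a parameter $t_\alpha$ with $D^+\mathcal{P}(t_\alpha)=\alpha$ and use the corresponding equilibrium state. But $\mathcal{P}(t)=0$ for all $t\geq 1$ and $\mathcal{P}'(1^-)=\alpha_2<0=\mathcal{P}'(1^+)$: the one-sided derivative $D^+\mathcal{P}$ takes no value in the open interval $(\alpha_2,0)$. For $\alpha$ in that range there is no $t_\alpha$ and no equilibrium state $\mu_\alpha$ with $\int\vpgeo\,d\mu_\alpha=\alpha$, and your lower-bound argument stops. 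The same gap affects part (1): after observing that $\{\int\vpgeo\,d\mu:\mu\in\mathcal{M}(G)\}\supset[\alpha_1,0]$, you invoke Birkhoff-generic points, but a measure achieving an intermediate average $\alpha$ need not be ergodic, and for a non-ergodic $\mu$ there is no reason its generic points should be Lyapunov regular with $\chi=-\alpha$. (That inference is legitimate at the extreme value $\alpha_1$, where every ergodic component must share the average, but not inside the interval.)

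The paper's essential ingredient, missing from your outline, is the construction of an increasing nested sequence of basic sets $\widetilde{\Lambda}_n\subset\Reg$ exhausting all basic sets (Proposition \ref{prop: Lambda sequence}, built on Proposition \ref{prop: key}), together with the Katok horseshoe theorem to produce basic sets approximating hyperbolic measures and closed geodesics with Lyapunov exponent arbitrarily close to $0$ and to $-\alpha_1$ (Proposition \ref{small Lya}). On each $\widetilde{\Lambda}_n$ the pressure $\mathcal{P}_n$ is strictly convex and analytic, so the Barreira--Doutor theory (Proposition \ref{prop: basic}) gives exact formulas $h(\L(-\alpha)\cap\widetilde{\Lambda}_n)=\mathcal{E}_n(\alpha)$ and $\dim_H(\L(-\alpha)\cap\widetilde{\Lambda}_n)=1+2\mathcal{E}_n(\alpha)/(-\alpha)$; the key convergence $\mathcal{P}_n\nearrow\mathcal{P}$ (Proposition \ref{prop: main Lambda convergence}) then transfers these to $\L(-\alpha)$ and simultaneously settles non-emptiness for $\alpha\in[\alpha_2,0)$. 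This also bypasses your Hausdorff-dimension difficulty: rather than attempting a Moran/Pesin-block construction in the genuinely non-uniformly hyperbolic ambient system (where, as you note, holonomies need not be bi-Lipschitz), the paper works entirely inside uniformly hyperbolic basic sets where the product structure and the dimension formula are already available. Your upper bound $h(\L(-\alpha))\leq\mathcal{E}(\alpha)$ via the pressure-distribution estimate matches the paper's.
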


Given results of \cite{burns2014lyapunov} and the similarity of the setting there, in some sense Theorem \ref{thm: main} is an expected result. However, we believe it is still worthwhile to provide the proof as relevant modifications need to be made. Such modifications are based on recently introduced tools (see $\mathsection$ \ref{sec: proof}) suitable for studying manifolds without focal points.

We briefly comment on our approach to this result, which is based on \cite{burns2014lyapunov}.
Since the singular set is non-empty, the geometric potential $\varphi^{geo}$ has more than one equilibrium state. Consequently, the pressure function $\P(t)$ is not differentiable at $t=1$, exhibiting a \emph{phase transition}. 
Based on this observation, we study the Lyapunov level sets $\mathcal{L}(\beta)$ in two separate cases depending on the domain of $\b$. 

Setting $\displaystyle \alpha_2:=D^{-}\mathcal{P}(1)$,
the first case concerns with the domain $\beta\in (-\alpha_2,-\alpha_1)$ corresponding to the time before the phase transition. 
In this case, we compute the entropy $h(\L(\b))$
using the fact that $\mathcal{P}$ is $C^1$ when $t<1$, which is based on the uniqueness of the equilibrium state for $t\varphi^{geo}$ obtained in \cite{chen2020unique}; see $\mathsection$ \ref{sec: prelim} for further discussions.

The other case $\beta\in [0,-\alpha_2]$ corresponds to the time past the phase transition, where we estimate the entropy spectrum from both below and above. For the lower bound, we follow the construction in \cite{burns2014lyapunov},
and build an increasingly nested sequence of basic sets (see Definition \ref{defn: basic}) in the complement of the singular set on which the geodesic flow is non-uniformly hyperbolic; see Proposition \ref{prop: Lambda sequence}. As the entropy spectrum is well-understood on the basic sets, and the increasingly nested basic sets are constructed so that they eventually intersect $\L(\b)$ non-trivially, we use such information to establish an effective lower bound for the entropy of $\mathcal{L}(\beta)$. In our case, the construction of such a sequence of basic sets relies on the hyperbolic index function $\lambda_T$ introduced in \cite{chen2020unique}; see $\mathsection$ \ref{sec: proof}. 

The paper is structured as follows. In $\mathsection$ \ref{sec: prelim}, we survey preliminaries on geometry, thermodynamic formalism, and multifractal anlaysis. In $\mathsection$ \ref{sec: 3}, we prove Theorem \ref{thm: main} by assuming the existence of the increasingly nested basic sets (Proposition \ref{prop: Lambda sequence}). In $\mathsection$ \ref{sec: proof}, we establish Proposition \ref{prop: Lambda sequence}. In its proof, we make use of Proposition \ref{prop: key}, a key proposition that allows us to equip local maximality to given hyperbolic sets, whose proof is deferred to $\mathsection$ \ref{sec: 5}. \\

\noindent\textbf{Acknowledgment} The authors would like to thank American Institute of Mathematics where this project was initiated. The authors would also like to thank Amie Wilkinson, Dan Thompson, and Keith Burns for useful discussions.

\section{Preliminaries}\label{sec: prelim}
\subsection{Geometry} 
In this subsection, we introduce and survey geometric features of manifolds with no focal points.
Let $M$ be a closed Riemannian manifold. For $v\in T^1M$, we denote  by $\g_v$ the unique geodesic with the initial velocity $v \in T^1M$; in particular, the geodesic flow $G=\{g_t\}_{t\in \R}$ is defined as $g_t v := \dot{\gamma}_v(t)$. Often we will identify the orbit segment connecting $v$ to $g_t v$ with $(v,t)\in T^1M \times [0,\infty)$. We equip $T^1M$ with following metric
\begin{equation}\label{eq: Knieper}
d(v,w):=\max\{d(\g_v(t),\g_w(t)) \colon t \in [0,1]\}.
\end{equation}
This metric was used by Knieper \cite{knieper1998uniqueness}, and it is locally equivalent to the more commonly used Sasaki metric.

A \textit{Jacobi field} $J(t)$ along a geodesic $\gamma$ is a vector field along $\gamma$ satisfying the following \textit{Jacobi equation}:
$$
J''(t)+R(J(t),\dot{\gamma}(t))\dot{\gamma}(t)=0
$$
where $'$ denotes the covariant derivative along $\gamma$ and $R$ is the Riemmanian curvature tensor. 
We denote the space of all Jacobi fields along $\gamma$ by $\J(\gamma)$. 

It is clear from the Jacobi equation that the behaviors of the Jacobi fields are governed by the Riemannian curvature of the manifold. For negatively curved manifolds, the function $t \mapsto \|J(t)\|$ is strictly convex for any Jacobi field $J$. Non-positively curved manifolds are natural generalizations of negatively curved manifolds, and $t \mapsto \|J(t)\|$ is convex. 
In regards to these behaviors of the Jacobi fields, it is then clear from the following definition that manifolds with no focal points are natural generalizations of non-positively curved manifolds:

\begin{definition}\label{defn: no focal}
A manifold $M$ has \textit{no focal points} if for any initially vanishing Jacobi field $J(t)$, its length $\|J(t)\|$ is strictly increasing.
\end{definition}

We say a Jacobi field $J$ is \textit{orthogonal} if both $J(t_0)$ and $J'(t_0)$ are perpendicular to $\dot{\gamma}(t_0)$ for some $t_0 \in \R$. Note that any such Jacobi field $J$ with this property has an evidently stronger property that $J$ and $J'$ are perpendicular to $\dot{\gamma}$ for \textit{all} $t \in \R$; see \cite{carmo1992riemannian}. 
We say a Jacobi $J$ is \textit{parallel} if $J' \equiv 0$. 

\begin{definition} 
For $v \in T^1M$, the \textit{rank} of $v$ is the dimension of the space of parallel Jacobi fields over $\gamma_v$. We say the manifold is \textit{rank 1} if it has at least one rank 1 vector.
The \textit{singular set} is the set of vectors with rank bigger than or equal 2:
$$\Sing:=\big\{v\in T^1M \colon \text{rank}(v) \geq 2\big\}.$$
The \emph{regular set}, $\Reg:= T^1M \setminus \Sing$, is defined as the complement of the singular set.
\end{definition}

We also denote the space of orthogonal Jacobi fields along $\gamma$ by $\J^\perp(\gamma)$. The set of \textit{stable orthogonal Jacobi fields} $\J^s(\gamma)$ is a subspace of $\J^\perp(\g)$ defined by
$$\J^s(\gamma) =\{J \in \J^\perp(\gamma) \colon \|J(t)\| \text{ is bounded for }t \geq 0\}.$$
Similarly, the set of \textit{unstable orthogonal Jacobi fields} $\J^u(\gamma)$ consists of orthogonal Jacobi fields whose norm is bounded for all $t \leq 0$.
By pulling back these linear subspaces $\J^{s/u}$ via the identification $T_vT^1M \simeq \J(\gamma_v)$, we define subbundles $E^{s/u}$ in $TT^1M$ by
$$E^{s/u}(v):=\{\xi \in T_vT^1M \colon J_\xi \in \J^{s/u}(v)\},$$
where $J_\xi$ is the Jacobi field along $\g_v$ whose initial conditions $J_\xi(0)$ and $J'_\xi(0)$ are specified by $\xi$.
We also define a 1-dimensional subbundle $E^c \subset TT^1M$ given by the geodesic flow direction.

The following proposition summarizes known properties on these subbundles when $M$ has no focal points; these properties are collected from \cite{pesin1977geodesic, eschenburg1977horospheres, hurley1986ergodicity}.
\begin{proposition}\label{prop: properties}
 Let $M$ be a $n$-dimensional closed Riemannian manifold without focal points.
\begin{enumerate}
\item $\dim(E^{s/u}) = n-1$ and $\dim(E^c) = 1$.
\item The subbundles $E^s,E^u,E^{cs}:=E^c\oplus E^s$ and $E^{cu}:=E^c\oplus E^u$ are $G$-invariant.
\item The subbundles $E^\s,\s \in \{s,u,cs,cu\}$, are integrable to $G$-invariant foliations $W^\s$.
\item $E^s(v)$ and $E^u(v)$ have a non-trivial intersection if and only if $v \in \Sing$.
\item For any $J \in \J^s(\gamma)$ (respectively, $\J^u(\gamma$)), $\|J(t)\|$ is non-increasing (respectively, non-decreasing) for all $t \in \R$.
\item The geodesic flow $G  =\{g_t\}_{t\in \R}$ on $T^1M$ is topologically transitive if $M$ is rank 1.
\end{enumerate}
\end{proposition}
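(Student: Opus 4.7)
The plan is to verify the six claims in turn, using classical Jacobi field analysis together with the no-focal-points hypothesis, which in particular implies the absence of conjugate points. Throughout I use the identification $T_vT^1M\simeq \J(\gamma_v)$ and restrict attention to the $2(n-1)$-dimensional orthogonal space $\J^\perp(\gamma)$.

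For item (1), the central direction $E^c$ is one-dimensional by construction. Since the absence of focal points rules out conjugate points, for each $v_0$ perpendicular to $\dot\gamma(0)$ and each $T>0$ there is a unique orthogonal Jacobi field $J_T$ with $J_T(0)=v_0$ and $J_T(T)=0$. A standard normal-families argument produces a subsequential limit $J_\infty$ bounded on $[0,\infty)$, yielding an injection $\dot\gamma(0)^\perp\hookrightarrow\J^s(\gamma)$ and hence $\dim\J^s(\gamma)\geq n-1$; the matching upper bound follows from linear independence, and the unstable count is symmetric. Item (2) is immediate since the boundedness conditions defining $\J^s,\J^u$ are preserved under reparametrization by $g_s$.

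For item (5), the natural idea is to prove monotonicity for the approximating Jacobi fields $J_T$ and pass to the limit: reading $J_T$ backwards from time $T$ with vanishing initial value, the no-focal-points hypothesis makes $\|J_T(T-s)\|$ strictly increasing in $s$, i.e.\ $\|J_T(t)\|$ is non-increasing in $t$ on $[0,T]$, and the limit inherits this. Combining with the shift invariance from item (2) extends the monotonicity to all $t\in\R$, and the unstable case is symmetric. Item (3) is the main obstacle: one must construct invariant foliations $W^{s/u}$ integrating $E^{s/u}$. The standard route, due to Pesin \cite{pesin1977geodesic} and Eschenburg \cite{eschenburg1977horospheres}, is to realize the leaves in the universal cover as horospheres obtained as $C^1$-limits of geodesic spheres $S(\gamma_v(T),T)$ as $T\to\infty$; existence and regularity of these limits is exactly what requires the full no-focal-points hypothesis rather than just non-positive curvature, and constitutes the delicate analytic step. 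Adding the flow direction then yields $W^{cs}$ and $W^{cu}$.

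For item (4), if $v\in\Sing$ then a non-zero parallel orthogonal Jacobi field $J$ satisfies $\|J(t)\|\equiv\|J(0)\|$, so $J\in\J^s(\gamma_v)\cap\J^u(\gamma_v)$. Conversely, for non-zero $J\in\J^s\cap\J^u$, item (5) forces $\|J(t)\|$ to be both non-increasing and non-decreasing, hence constant; the no-focal-points analysis of \cite{pesin1977geodesic, hurley1986ergodicity} then implies that $J$ must in fact be parallel, so $v\in\Sing$. Finally, for item (6) one combines the existence of hyperbolic periodic orbits in $\Reg$ (a consequence of the rank 1 hypothesis together with the $E^{s/u}$-structure above) with density of their stable and unstable manifolds in $T^1M$, as in Hurley \cite{hurley1986ergodicity}. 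The main difficulty throughout remains item (3), whose foundational work in \cite{pesin1977geodesic,eschenburg1977horospheres} is precisely what extends the non-positively curved theory to the no-focal-points setting.
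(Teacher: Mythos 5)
The paper offers no proof for this proposition, only a citation to Pesin, Eschenburg, and Hurley, so your proposal goes beyond it; but your sketch (Jacobi fields vanishing at large $T$ for the dimension count of $E^{s/u}$, shift-invariance of boundedness for item (2), running the no-focal-points monotonicity backwards and passing to the limit for item (5), constant-norm stable-and-unstable Jacobi fields being parallel for item (4)) is the standard route and defers to exactly the same references for the genuinely delicate parts, namely the existence and regularity of the horospherical foliations in item (3) and Eberlein–Hurley-type transitivity in item (6). So this is essentially the paper's approach, just with some of the routine steps spelled out.
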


The foliations $W^\sigma$, $\sigma \in \{s,u,cs,cu\}$, defined in the above proposition are called \emph{stable}, \emph{unstable}, \emph{center-stable}, and \emph{center-unstable}, respectively.
We endow each foliation with the \emph{intrinsic metric} as follows. On $W^s(v)$ we define
$$d^s(u,w) :=\inf \{\ell(\pi \gamma) \mid \g \colon [0,1] \to W^s(v),~ \g(0)=u,~\g(1) = w\}$$
where $\pi\colon T^1M \to M$ is the canonical projection, $\ell$ denotes the length of the curve on $M$, and the infimum is taken over all $C^1$ curves $\g$ connecting $u$ and $v$ in $W^s(v)$. 
Locally on $W^{cs}(v)$, we define 
$$d^{cs}(u,w):=|t|+d^s(g_tu,w)$$
where $t\in \R$ is the unique real number such that $g_tu \in W^s(w)$.  It extends to the metric on the entire center-stable leaf $W^{cs}(v)$ in an obvious way.
Moreover, it follows from the above proposition that the map $t \mapsto d^{\sigma}(g_tu,g_tv)$ is non-increasing for $\s \in \{s,cs\}$. 

Likewise, the metrics $d^u$ and $d^{cu}$ on $W^u$ and $W^{cu}$, respectively, are defined analogously and the map $t \mapsto d^{\sigma}(g_tu,g_tv)$ is non-decreasing for $\s \in \{u,cu\}$. All intrinsic metrics $d^{\s}$, $\s\in \{s,u,cs,cu\}$, are locally equivalent to the metric $d$ from \eqref{eq: Knieper}.

In this paper, we will primarily focus on rank 1 surfaces $S$ without focal points. The rank 1 condition on $S$ is equivalent to its genus being at least 2,
and the singular set admits an alternative description  given by
\begin{equation}\label{eq: char Sing by K}
\Sing = \big\{v \in T^1S \colon K(\pi g_tv) = 0 \text{~for all~} t \in\R\big\},
\end{equation}
where $K$ is the Gaussian curvature; see \cite{hopf1948closed} and \cite[Corollary 3.3, 3.6]{eberlein1973geodesic}. 
Moreover, the Jacobi equation simplifies to
\begin{equation}\label{eq: jacobi surface}
J''(t)+K(\gamma(t))J(t)=0.
\end{equation}
Given any orthogonal Jacobi field $J$ along $\gamma$, we may identify it as a unit vector field along $\gamma$ scaled by a continuous function. By an abuse of notation, we denote such a continuous function also by $J$, and we may treat \eqref{eq: jacobi surface} as a scalar ODE.

For any $v\in T^1S$, the unstable leaf $W^u(v)$ projects to the horosphere $H^u(v) \subset S$, and the one-dimensional symmetric operator on $T_{\pi v}H^u(v)$ defines the \emph{geodesic curvature} $k^u(v)$ of the horosphere $H^u(v)$; see \cite[Section 3]{chen2020unique} for details. Likewise, the geodesic curvature $k^s(v)$ of the stable horosphere $H^s(v)$ is defined analogously, and satisfies $k^s(-v) = k^u(v)$. 

Using the geodesic curvatures $k^{s/u}$, we define a non-negative function $\lambda$ on $T^1S$ given by
$$\lambda(v) := \min\{k^s(v),k^u(v)\}.$$
The horospheres are $C^2$ for manifolds without focal points, so $k^{s/u}$ and $\lambda$ are continuous. Such $\lambda$ first appeared in \cite{burns2018unique} in the setting of non-positively curved manifolds as a mean of measuring hyperbolicity on the unit tangent bundle. It serves the same purpose in our setting, and satisfies the following properties:

\begin{proposition}\label{prop: lambda properties}
\begin{enumerate}
\item $\lambda|_{\Sing} \equiv 0$.
\item If $\lambda(v)=0$, then $K(\pi v)=0$.
\item If $\lambda(g_tv) = 0$ for all $t \in\R$, then $v \in \Sing$.
\end{enumerate}
\end{proposition}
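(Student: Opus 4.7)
The plan is to handle the three parts in order, noting that part (3) reduces immediately to part (2) via the characterization \eqref{eq: char Sing by K}. Throughout I exploit the surface setting: by Proposition \ref{prop: properties}(1) both $E^s$ and $E^u$ are $1$-dimensional, so the norm of any orthogonal Jacobi field is a scalar function satisfying the scalar Jacobi equation \eqref{eq: jacobi surface}. For part (1), suppose $v \in \Sing$. Then Proposition \ref{prop: properties}(4) gives $E^s(v) \cap E^u(v) \neq \{0\}$, and $1$-dimensionality forces $E^s(v) = E^u(v)$. Any non-zero Jacobi field $J$ in this common line is simultaneously stable and unstable, so by Proposition \ref{prop: properties}(5), $\|J(t)\|$ is both non-increasing and non-decreasing, hence constant. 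As $k^u(v)$ and $k^s(v)$ arise as the logarithmic derivatives of the unstable and stable Jacobi norms at $t=0$, both must vanish, giving $\lambda(v) = 0$.

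For part (2), assume $\lambda(v) = 0$ and, without loss of generality, $k^u(v) = 0$. Let $J$ be a non-zero unstable orthogonal Jacobi field along $\gamma_v$, and set $u(t) := \|J(t)\|$. I first argue $u$ never vanishes: if $u(t_0) = 0$ then by linear ODE uniqueness $J'(t_0) \neq 0$ (otherwise $J \equiv 0$), which forces $u(s) > 0$ for $s$ near but distinct from $t_0$; taking $s < t_0$ then violates the globally non-decreasing behavior of $u$ guaranteed by Proposition \ref{prop: properties}(5). Hence $u > 0$ and, after fixing an orientation on the orthogonal line bundle along $\gamma_v$, the scalar $u$ satisfies $u''(t) + K(\gamma(t))\, u(t) = 0$ from \eqref{eq: jacobi surface}. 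The hypothesis $k^u(v) = 0$ translates to $u'(0) = 0$, and since $u$ is non-decreasing, $u' \geq 0$ globally, so $t = 0$ is a minimum of the smooth function $u'$; comparing one-sided difference quotients yields $u''(0) = 0$. Evaluating the scalar Jacobi equation at $t = 0$ then gives $K(\pi v)\, u(0) = 0$, and $u(0) > 0$ concludes $K(\pi v) = 0$.

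Part (3) is then immediate: if $\lambda(g_t v) = 0$ for every $t \in \R$, applying part (2) at each $g_t v$ yields $K(\pi g_t v) = 0$ for all $t$, and the characterization \eqref{eq: char Sing by K} of the singular set delivers $v \in \Sing$. The main technical point lies in part (2), where the step $u'(0) = 0 \Rightarrow u''(0) = 0$ demands that $u$ be $C^2$ near $0$; this is precisely why the non-vanishing of $u$ must be secured first, so that the scalar ODE governs $u$ itself rather than only $|u|$. Everything else amounts to combining the no-focal-points structure encoded in Proposition \ref{prop: properties} with the scalar reduction of the Jacobi equation available in dimension two.
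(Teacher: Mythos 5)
Your proof is correct and follows essentially the same route as the paper: part (1) via the constant-norm Jacobi field on $\Sing$, part (2) via the vanishing of $u''(0)$ at the interior extremum of the monotone $u'$ combined with the scalar Jacobi equation, and part (3) by reduction to part (2) and the curvature characterization of $\Sing$. The only substantive difference is that you explicitly establish the non-vanishing of the Jacobi field norm before treating it as a smooth scalar solution of \eqref{eq: jacobi surface}, a point the paper leaves implicit, and you take the WLOG with $k^u$ rather than $k^s$.
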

\begin{proof}
The first two statements relies on \cite[Lemma 2.9]{burns2018unique} (see also \cite[Lemma 3.11]{chen2020unique}) which states that the unstable Jacobi field $J^u \in \J^u(\g_v)$ with $J^u(0)=1$ satisfies
\begin{equation}\label{eq: J and k}
(J^u)'(t) = k^u(g_tv) J^u(t) \text{ for all }t \in \R.
\end{equation}
Likewise, $(J^s)'(t) = -k^s(g_tv) J^s(t)$ for $J^s \in \J^s(\g_v)$ with $J^s(0)=1$.
 
For the first statement, if $v \in \Sing$, then $E^s(v)$ and $E^u(v)$ coincide, and the norm of the Jacobi field $J_\xi$ corresponding to $\xi \in E^s(v) = E^u(v)$ is constant from Proposition \ref{prop: properties}. Then it follows from \eqref{eq: J and k} and the definition of $\lambda$ that $\lambda(v)=0$.

For the second statement, let $v \in T^1S$ with $\lambda(v) = 0$, and without loss of generality, suppose that $k^s(v) = 0$. Then the stable Jacobi field $J^s\in \J^s(\g_v)$ with $J^s(0)=1$ 
satisfies $(J^s)'(0) = 0$ from \eqref{eq: J and k}. Since $(J^s)'$ is a non-positive function from Proposition \ref{prop: properties}, it follows that $(J^s)''(0)=0$, which then translates to $K(\pi v)=-(J^s)''(0)/J^s(0)=0$ from the Jacobi equation \eqref{eq: jacobi surface}.

The last statement is an easy consequence of the second statement and the alternative characterization \eqref{eq: char Sing by K} of the singular set.
\end{proof}

\begin{remark}\label{rem: difference in setting}
We note here that the behavior of $\lambda$ slightly differs between non-positively curved manifolds and manifolds with no focal points. For a non-positively curved manifold $M$, whenever $\lambda(v) = 0$ for some $v\in T^1M$, then it necessarily follows that $\lambda(g_tv)=0$ either for all $t\geq 0$ or for all $t\leq 0$. This is due to the convexity of the function $t \mapsto \|J(t)\|$ for any Jacobi field $J$. Such a monotonic behavior of $\lambda$ then translates to other geometric properties, including $d(\Sing,g_tv) \to 0$ as $t \to \infty$ or $t \to -\infty$; see \cite[Section 3]{burns2018unique}. 

The analogous property, however, does not hold for manifolds without focal points. Instead, a related function $\lambda_T$ serving a similar purpose was considered in \cite{chen2020unique,chen2021properties}, and we also make use of it in $\mathsection$ \ref{sec: proof}.
\end{remark}

For each $\eta>0$, we define
$$\Reg(\eta):=\{v\in T^1S \colon \lambda(v) \geq \eta\}.$$
While it defines a nested family of compact subsets in $\Reg$, it does not exhaust $\Reg$ as there are vectors $v\in \Reg$ with $\lambda(v) =0$. Nevertheless, the geodesic flow restricted to $\Reg(\eta)$ is uniformly hyperbolic; see Proposition \ref{prop: compact hyperbolic} or \cite[Lemma 3.10]{burns2018unique}. Using this fact together with the uniform continuity of $\lambda$,
we obtain the following version of the shadowing lemma for orbit segments whose endpoints lie in $\Reg(\eta)$. 
This lemma is similar to \cite[Lemma 2.4]{burns2014lyapunov} which establishes the shadowing lemma for orbit segments whose endpoints lie in subsets of the form $\{v\in T^1S : K( \pi v)\leq -\eta\}$. The proof there readily extends to orbit segments with endpoints in $\Reg(\eta)$, and we omit the proof.

%On the other hand, the set $\{v \in T^1S \colon K(\pi v) \neq 0\}$ is contained in the union $\bigcup\limits_{\eta>0} \Reg(\eta)$ from Proposition \ref{prop: lambda properties}

\begin{lemma}[Shadowing lemma]\label{lem: shadowing} Let $S$ be a closed surface without focal points. 
For any $\eta,\ep,\tau>0$, there exists $\delta>0$ such that for any collection of orbit segments $\{(v_i,t_i)\}_{i \in \Z}$ with $v_i,g_{t_i}v_i\in \Reg(\eta),$ $t_i \geq \tau$ and $d(g_{t_i}v_i,v_{i+1})<\delta$ for all $i \in \Z$, there exist a geodesic $\gamma$ and a sequence of times $\{T_i\}_{i\in \Z}$ with $T_0 = 0$, $T_i+t_i-\ep \leq T_{i+1} \leq T_i+t_i+\ep$, and $d(\dot{\gamma}(t),\dot{\gamma}_{v_i}(t-T_i)) <\ep$ for all $t\in [T_i,T_{i+1}]$ and $i \in \Z$.

The geodesic $\gamma$ is unique upto re-parametrization. Moreover, if the orbits being shadowed are periodic, then the shadowing orbit is also periodic. 
\end{lemma}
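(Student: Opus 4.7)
The plan is to deduce this from the classical shadowing lemma for uniformly hyperbolic flows, applied to the compact hyperbolic set $\Reg(\eta)$, after extending the splicing argument to pseudo-orbit segments whose interiors need not lie in $\Reg(\eta)$.

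First, I would exploit the fact that $\Reg(\eta)$ is a compact, $G$-invariant, uniformly hyperbolic set (Proposition \ref{prop: compact hyperbolic} above), together with the continuity of the stable/unstable subbundles from Proposition \ref{prop: properties}. This yields a uniform local product structure near $\Reg(\eta)$: there exist $r_0,\k>0$ such that for any $u,w\in \Reg(\eta)$ with $d(u,w)<r_0$, the leaves $W^{cs}_{loc}(u)$ and $W^u_{loc}(w)$ intersect in a unique point within distance $\k \cdot d(u,w)$ of each. The non-expansion along stable directions and non-contraction along unstable directions from Proposition \ref{prop: properties}(5) let one control errors even across segments $(v_i,t_i)$ whose interiors leave $\Reg(\eta)$, provided the endpoints lie in it.

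Next, I would choose $\delta>0$ small enough so that at each splice the local product intersection is well-defined and lies within $\ep/3$ of both reference points; by uniform continuity of $g_t$ for $|t|\le \tau+\ep$, a perturbation of size $\delta$ propagates to a perturbation of size at most $\ep/3$ over the following segment; and the flow-direction component of each splice, which produces the time shift $T_{i+1}-T_i-t_i$, is bounded by $\ep$. With these constants fixed, I would inductively build finite shadowing orbits $\gamma^{(N)}$ for $|i|\leq N$ by splicing at each $v_{i+1}\in \Reg(\eta)$ via the local product structure applied to $g_{t_i}v_i$ and $v_{i+1}$. A subsequential limit as $N\to \infty$ produces the bi-infinite geodesic $\gamma$. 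Uniqueness up to reparametrization follows from expansivity of the geodesic flow on $\Reg(\eta)$: two $\ep$-shadowing orbits remain uniformly close, forcing them to coincide up to a time reparametrization. For periodic pseudo-orbits, periodicity of the splice data combined with uniqueness (or, equivalently, a fixed-point argument for the shift map on the compact space of splices) delivers a periodic shadowing orbit.

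The main obstacle will be tracking the time drift $T_{i+1}-T_i-t_i$ introduced at each splice. Unlike the stable and unstable directions, the flow direction is not contracted, so this drift must be estimated directly from the jump size $\delta$ rather than absorbed into hyperbolic contractions. This is handled by decomposing each splice displacement using the intrinsic metric $d^{cs}(g_{t_i}v_i,\cdot)$ and reading off the flow-direction component, in direct analogy with the argument in \cite[Lemma 2.4]{burns2014lyapunov}; the only modification is to replace the curvature condition $K(\pi v)\leq -\eta$ used there with membership in $\Reg(\eta)$, which preserves all the uniform hyperbolic estimates that enter the proof.
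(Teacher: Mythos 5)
Your approach matches what the paper does: the paper gives no proof for this lemma, instead asserting that the argument in \cite[Lemma 2.4]{burns2014lyapunov} extends verbatim once $\{v : K(\pi v) \le -\eta\}$ is replaced by $\Reg(\eta)$, and you carry out exactly that adaptation. The ingredients you invoke --- local product structure near the compact set $\Reg(\eta)$, non-expansion/non-contraction of intrinsic leaf metrics from Proposition~\ref{prop: properties}(5) to handle the middles of orbit segments that leave $\Reg(\eta)$, inductive splicing with a limiting argument, expansivity on $\Reg(\eta)$ for uniqueness, and a separate accounting of the flow-direction drift via $d^{cs}$ --- are precisely the ones the Burns--Gelfert argument uses.

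Two small inaccuracies worth flagging, neither of which undermines the proposal. First, you describe $\Reg(\eta) = \{\lambda \ge \eta\}$ as $G$-invariant; it is compact but generally not invariant (the paper only asserts invariance for $\Reg$ itself, and Proposition~\ref{prop: compact hyperbolic} concerns compact $G$-invariant subsets of $\Reg$). What you actually need, and what you go on to use correctly, is the non-invariant statement: uniform contraction/expansion rates hold whenever the relevant orbit segment stays in a compact subset of $\Reg$, combined with monotonicity outside it. Second, your opening sentence frames the argument as a \emph{deduction} from the classical shadowing lemma for uniformly hyperbolic flows, but the situation is not uniformly hyperbolic globally, and the Burns--Gelfert proof is a direct splicing construction rather than a reduction to the classical theorem; the rest of your write-up correctly treats it that way. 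You could also make explicit the point that strict contraction across each splice comes from the endpoints lying in $\Reg(\eta)$ (where, by uniform continuity of $\lambda$ and $t_i \ge \tau$, a definite initial and terminal piece of each segment remains in $\Reg(\eta/2)$, giving a definite contraction factor), while the middle contributes no expansion; this is the precise sense in which ``non-expansion lets one control errors.''
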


Another useful structure coming from the foliations is the local product structure. In the following definition, $B(v,\d)$ denotes an open ball of radius $\d$ around $v$.
Since the foliations $W^\sigma$, $\sigma \in \{s,cs,u,cu\}$, are continuous, any compact subset of $\Reg$ has a uniform local product structure for some $\delta>0$ and $\kappa\geq 1$. 

\begin{definition}[Local product structure]\label{defn: LPS} 
We say the foliations $W^{cs}$ and $W^{u}$ have the \textit{local product structure}
at scale $\delta>0$ with constant $\kappa\geq1$ at $v\in T^1S$ if for any $w_{1},w_{2}\in B(v,\delta)$,
the intersection $[w_{1},w_{2}]:=W_{\kappa\delta}^{u}(w_{1})\cap W_{\kappa\delta}^{cs}(w_{2})$
is a unique point and satisfies 
\begin{align*}
d^{u}(w_{1},[w_{1},w_{2}]) & \leq\kappa d(w_{1},w_{2}),\\
d^{cs}(w_{2},[w_{1},w_{2}]) & \leq\kappa d(w_{1},w_{2}).
\end{align*}
\end{definition}

\subsection{Thermodynamic Formalism}
In this subsection, we briefly survey relevant results in thermodynamic formalism and multifractal analysis. 

A probability measure $\mu$ on $T^1S$ is \textit{$G$-invariant} if $\mu$ is $g_t$-invariant for all $t \in \R$, and we denote the set of all invariant measures by $\M(G)$. We say an $\mu\in \M(G)$ is \emph{hyperbolic} if its Lyapunov exponent (see Definition \ref{defn: Lyap exp}) is non-zero.
We denote by $h(\mu)$ the metric entropy of the time-one map of the flow $g_1$ with respect to $\mu$. For any $G$-invariant subset $Z \subseteq T^1S$, we denote the entropy of $g_1$ restricted $Z$ by $h(Z)$. If $Z$ is non-compact, such as the Lyapunov level sets $\L(\b)$, we adopt Bowen's definition \cite{bowen1973topological} of entropy for non-compact sets. 
 
For any continuous flow $G = \{g_t\}_{t \in \R}$ on a compact metric space $X$ and any continuous function (also known as the \emph{potential}) $\vp$ on $X$, the 
\emph{pressure} $P(\vp)$ of $\vp$ can be defined by the \emph{variational principle}:
\begin{equation}\label{eq: var prin}
P(\vp) = \sup \Big\{h(\mu)+ \int\vp \,d\mu \colon \mu \in \M(G)\Big\}.
\end{equation}
Any invariant measure, if any, achieving the supremum is called an \textit{equilibrium state}. We refer the reader to \cite{walters2000introduction} for more details.

From the entropy-expansiveness of the geodesic flows over manifolds with no focal points \cite{liu2016entropy}, the entropy function $\mu \mapsto h(\mu)$ is upper semi-continuous, and hence, there exists at least one equilibrium state for any potential.

For geodesic flows over rank 1 non-positively curved manifolds, Burns, Climenhaga, Fisher, and Thompson \cite{burns2018unique} showed that $t \vpgeo$ has a unique equilibrium state for $t$ in a small neighborhood of 0. In the case of surfaces, they also established the result for all $t<1$. In our setting of surfaces with no focal points, the analogous result is established by Chen, Kao, and the first named author:

\begin{proposition}\label{prop: CKP}\cite[Theorem C]{chen2020unique}
Let $S$ be a rank 1 Riemannian surface with no focal points. Then $t\vpgeo$ has a unique equilibrium state $\mu_t\in \M(G)$ for each $t < 1$.
\end{proposition}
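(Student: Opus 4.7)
The plan is to apply the abstract uniqueness theorem of Climenhaga--Thompson, which yields a unique equilibrium state once one verifies (i) specification and (ii) the Bowen property for $t\vpgeo$ on a suitable collection $\mathcal{G}$ of ``good'' orbit segments, together with (iii) a strict pressure gap between $P(t\vpgeo)$ and the pressures carried by both $\Sing$ and the bad orbit segments. Fix $t<1$. For a small $\eta>0$, I would build $\mathcal{G}$ out of orbit segments whose endpoints lie in a set of vectors witnessing uniform hyperbolicity over a long time window; in view of Remark \ref{rem: difference in setting}, one must work with the refined index $\lambda_T$ rather than $\lambda$ itself, since $\lambda$ need not behave monotonically along orbits in the no-focal-points setting. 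Each orbit segment $(v,\tau)$ then decomposes as a prefix $\mathcal{C}^p$, a good core in $\mathcal{G}$, and a suffix $\mathcal{C}^s$, where the prefix and suffix account for the leading and trailing portions that fail the hyperbolicity witness condition.

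Specification on $\mathcal{G}$ follows directly from Lemma \ref{lem: shadowing}, whose hypotheses are met precisely because orbit segments in $\mathcal{G}$ start and end in $\Reg(\eta)$. For the Bowen property, $\vpgeo$ is H\"older continuous on any compact subset of $\Reg$, and uniform hyperbolicity along segments in $\mathcal{G}$ provides exponential control on the separation of shadowing orbits; summing along a segment then yields a uniform bound on Bowen sums of $t\vpgeo$. For the pressure gap, note that on $\Sing$ the unstable Jacobi fields are parallel by Proposition \ref{prop: properties}, so $\vpgeo\equiv 0$ on $\Sing$; combined with the vanishing topological entropy of $\Sing$ for a rank 1 surface, this yields $P(\Sing,t\vpgeo)=0$. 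On the other hand, Pesin's entropy formula applied to the Liouville measure $\mu_L$ gives $h(\mu_L)=\chi(\mu_L)$, and $\mu_L$ is hyperbolic by the rank 1 hypothesis, so
\[
\mathcal{P}(t)\;\geq\;h(\mu_L)-t\chi(\mu_L)\;=\;(1-t)\chi(\mu_L)\;>\;0
\]
for all $t<1$. An analogous estimate, tracking the entropy contribution of the bad prefix/suffix portions, bounds $P(\mathcal{C}^p\cup\mathcal{C}^s,t\vpgeo)$ strictly below $\mathcal{P}(t)$.

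The main obstacle is the pressure gap as $t\to 1^-$, where the margin $(1-t)\chi(\mu_L)$ shrinks to zero. This is precisely what forces the strict inequality $t<1$, since at $t=1$ any invariant measure supported on a singular closed orbit is a zero-pressure equilibrium state competing with $\mu_L$, breaking uniqueness. A second source of difficulty, absent in the non-positively curved setting of \cite{burns2018unique}, is that $\lambda$ no longer satisfies the monotonicity used there (Remark \ref{rem: difference in setting}); accordingly, one must systematically substitute the averaged index $\lambda_T$ from \cite{chen2020unique} throughout the decomposition argument to guarantee that the good cores in $\mathcal{G}$ truly exhibit uniform hyperbolicity, which is what enables both the shadowing input for specification and the H\"older control required for the Bowen property.
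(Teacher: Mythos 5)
The paper offers no proof of this proposition: it is quoted verbatim as Theorem C of \cite{chen2020unique} and used as a black box, so there is nothing internal to compare against. Your outline does correctly reproduce the broad strategy of the cited work: the Climenhaga--Thompson decomposition $(\mathcal{C}^p,\mathcal{G},\mathcal{C}^s)$, specification via shadowing, the Bowen property for $t\vpgeo$ along $\mathcal{G}$, and a pressure gap for the obstructions, with the essential adaptation being the substitution of $\lambda_T$ for $\lambda$, exactly as you emphasize.

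One technical point worth correcting is the mechanism behind the Bowen property. You derive it from H\"older continuity of $\vpgeo$ on compact subsets of $\Reg$, but good orbit segments in $\mathcal{G}$ are only required to have their endpoints (and suitable Birkhoff averages of $\lambda_T$) controlled; in their interior they may approach $\Sing$ arbitrarily closely, where no uniform H\"older modulus for $\vpgeo$ is available. Both \cite{burns2018unique} and \cite{chen2020unique} instead establish the Bowen bound directly by comparing solutions of the Riccati equation along nearby geodesics, using only continuity of $\vpgeo$ together with the contraction estimates. Also note that the pressure gap the machinery actually needs is $P([\mathcal{C}^p]\cup[\mathcal{C}^s],t\vpgeo)<\mathcal{P}(t)$, which is strictly stronger than $P(\Sing,t\vpgeo)<\mathcal{P}(t)$; your Liouville/Pesin argument (together with Ruelle's inequality on $\Sing$) gives the latter, whereas the former is the hardest step in the cited proof, requiring a counting estimate on orbit segments that spend most of their time where $\lambda_T$ is small, and your sketch only gestures at it.
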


Recall that the geometric potential is a continuous function on $T^1S$ defined by $$\vpgeo(v):=-\lim\limits_{t \to 0} \frac{1}{t}\log\Big\|dg_t|_{E^u_v}\Big\|.$$
In particular, $-\vpgeo(v)$ captures the instantaneous growth rate of $E^u_v$ under the derivative of the geodesic flow. 

\begin{definition}\label{defn: Lyap exp}
The \textit{forward Lyapunov exponent} $\chi^+(v)$ of $v$ is defined as
$$\chi^+(v):=\lim\limits_{t \to \infty}\frac{1}{t}\log \Big\|dg_t|_{E^u_v}\Big\|=\lim\limits_{t\to\infty}-\frac{1}{t} \int_0^t \vpgeo(g_sv)\,ds.$$
The \textit{backward Lyapunov exponent}
$$\chi^-(v):=\lim\limits_{t \to \infty}\frac{1}{-t}\log\Big \|dg_{-t}|_{E^u_v}\Big\| =\lim\limits_{t\to\infty}-\frac{1}{t} \int^0_{-t} \vpgeo(g_sv)\,ds.$$

We say $v \in T^1S$ is \textit{Lyapunov regular} if $\chi^+(v)=\chi^-(v)$, in which case we denote its common value by $\chi(v)$. The $\b$-\emph{Lyapunov level set} is defined as 
$$\L(\beta) := \{v\in T^1S \colon v \text{ is Lyapunov regular and }  \chi(v) =\beta\}.$$
For $\mu\in \M(G)$, we define its \emph{Lyapunov exponent} by $\displaystyle \chi(\mu):=-\int{\vpgeo \,d\mu}.$
\end{definition}
 
\begin{remark} \label{rem: Sing in L0}
Notice that the singular set is contained in the 0-level set $ \L(0)$.
This follows because for any $v \in \Sing$, its stable subspace $E^s_v$ coincides with its unstable subspace $E^u_v$, and hence, the unstable Jacobi field $J^u$ along $\gamma_v$ has constant length. Hence, $\vpgeo$ vanishes on the singular set, and we have $\chi(v) = 0$ for any $v \in \Sing$.
\end{remark}

As outlined in the introduction, we conduct our multifractal analysis on $\mathcal{L}(\beta)$ by studying the pressure function $\P$ and its Legendre transform $\E$ defined as in the \eqref{eq: P and E}:
$$
\mathcal{P}(t):=P(t\vpgeo) ~\text{ and }~  \mathcal{E}(\a):=\inf_{t\in \mathbb{R}}(\mathcal{P}(t)-t\a).
$$
The following proposition summarizes some useful properties of $\mathcal{P}$ in our setting:

\begin{proposition}\label{prop: BG thermo and multifractal}
With $\mathcal{P}$ as above, we have
\begin{enumerate}
\item $\mathcal{P}$ is non-increasing and convex, and $\mathcal{P}(t)=0$ for all $t\geq 1$.
\item $\mathcal{P}$ is $C^1$ everywhere except for $t=1$. Moreover, $\displaystyle \mathcal{P}'(t)=\int \varphi^{geo}d\mu_t$ for all $t<1$, where $\mu_t$ is the unique equilibrium state for $t\vpgeo$ from Proposition \ref{prop: CKP}.
\item Recalling the definition of $\alpha_1:=\lim\limits_{t\rightarrow -\infty}D^{+}\P(t)$ from the introduction, for every $\alpha\in [\alpha_1,0]$ there exists a unique supporting line $\ell_{\alpha}$ to $\mathcal{P}$ of slope $\alpha$. 
%We denote by such a line.
\item  For $t<1$, the unique supporting line to $\mathcal{P}$ at $(t,\mathcal{P}(t))$  is
$$\ell_{\alpha_t}(s):=h(\mu_t)+s\a_t,$$ 
where $\displaystyle\alpha_t:=\P'(t)=\int \varphi^{geo}d\mu_t$. In particular, $\mathcal{E}(\alpha_t)=h(\mu_{t}).$
\end{enumerate}
\end{proposition}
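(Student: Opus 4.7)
The plan is to follow the proof of the analogous result in the non-positively curved setting \cite{burns2014lyapunov}, which carries over once Proposition \ref{prop: CKP} is in place.

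For (1), convexity follows from the variational principle \eqref{eq: var prin} expressing $\mathcal{P}$ as a supremum of affine functions in $t$. Monotonicity comes from $\varphi^{geo} \leq 0$: by Proposition \ref{prop: properties}(5), $\|J^u(t)\|$ is non-decreasing, so $\log\|dg_t|_{E^u_v}\| \geq 0$. The vanishing of $\mathcal{P}$ on $[1, \infty)$ combines three observations: (i) the Ruelle inequality applied to the time-one map gives $h(\mu) \leq -\int \varphi^{geo}\, d\mu$ for every $\mu \in \M(G)$, so $\mathcal{P}(1) \leq 0$; (ii) any invariant probability measure supported on $\Sing$ (which exists since $\Sing$ is a non-empty closed invariant set) has $\int \varphi^{geo}\, d\mu = 0$ by Remark \ref{rem: Sing in L0}, yielding $\mathcal{P}(t) \geq 0$ for every $t$; and (iii) the monotonicity from the previous step then forces $\mathcal{P}(t) = 0$ on $[1, \infty)$.

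For (2), I would invoke the standard link between uniqueness of equilibrium states and differentiability of the pressure, namely that the subdifferential $\partial \mathcal{P}(t_0)$ equals $\{\int \varphi^{geo}\, d\mu : \mu \text{ is an equilibrium state for } t_0 \varphi^{geo}\}$; this uses upper semicontinuity of the entropy functional, which in our setting is available by entropy-expansiveness \cite{liu2016entropy}. For $t < 1$, Proposition \ref{prop: CKP} provides a unique $\mu_t$, so $\partial \mathcal{P}(t) = \{\int \varphi^{geo}\, d\mu_t\}$ and $\mathcal{P}$ is differentiable at $t$ with $\mathcal{P}'(t) = \int \varphi^{geo}\, d\mu_t$. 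For $t > 1$, differentiability with $\mathcal{P}'(t) = 0$ is immediate from (1).

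Statements (3) and (4) are then routine convex analysis. For (3), the set of slopes of supporting lines to a convex function equals the closure of the range of its derivative, which by (1)--(2) is exactly $[\alpha_1, 0]$; uniqueness of the supporting line of a given slope is automatic, since two parallel supporting lines to a convex function must coincide. For (4), the supporting line at $(t, \mathcal{P}(t))$ is $\ell(s) = \mathcal{P}(t) + \alpha_t(s - t)$, and substituting $\mathcal{P}(t) = h(\mu_t) + t \alpha_t$ from the variational principle gives $\ell_{\alpha_t}(s) = h(\mu_t) + s\alpha_t$; the infimum defining $\mathcal{E}(\alpha_t)$ is attained at the tangency point $s = t$, so $\mathcal{E}(\alpha_t) = \mathcal{P}(t) - t\alpha_t = h(\mu_t)$. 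The main substantive input throughout is Proposition \ref{prop: CKP}, which drives (2); the remaining parts of the proposition reduce to elementary convex analysis combined with upper semicontinuity of entropy.
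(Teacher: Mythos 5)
Your proof is correct and follows essentially the same route as the paper, which itself gives only a brief sketch deferring to Burns--Gelfert \cite{burns2014lyapunov}: the variational principle and non-positivity of $\varphi^{geo}$ give monotonicity and convexity, the Ruelle inequality together with an invariant measure on $\Sing$ pins $\mathcal{P}$ to zero on $[1,\infty)$, uniqueness of equilibrium states (Proposition \ref{prop: CKP}) plus upper semicontinuity of entropy yields differentiability via the subdifferential characterization, and (3)--(4) are convex-analytic consequences. The only point glossed over (by both you and the paper) is the endpoint $\alpha = \alpha_1$ in (3): for a general convex function, $\alpha_1 = \lim_{t\to-\infty}D^+\mathcal{P}(t)$ need not lie in $\bigcup_t \partial\mathcal{P}(t)$, so one cannot conclude existence of a supporting line of slope $\alpha_1$ from convexity alone; here one additionally uses that $\alpha_1$ is realized as $\int\varphi^{geo}\,d\mu$ for some $\mu\in\M(G)$ (cf.\ the proof of Proposition \ref{prop: before phase transition}(3)), which via the variational principle gives $\mathcal{E}(\alpha_1)\geq h(\mu)>-\infty$.
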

\begin{proof}[Proof sketch]
The first statement follows from the fact that the singular set is non-empty. The second statement is due to the variational principle \eqref{eq: var prin} and Proposition \ref{prop: CKP}; see \cite[Proposition 5]{burns2014lyapunov}. The third statement follows from the first statement, and the last statement follows from the second statement. 
\end{proof}

At $t=1$, there is a \emph{phase transition}, which is caused by the discrepancy between $$\alpha_2:=D^{-}\P(1) \text{ and } D^{+}\P(1)=0$$
where $\alpha_2$ is well-defined due to the convexity of $\mathcal{P}$. 
Due to the geometric nature of the setting, all $\alpha$ appearing in this paper, including $\a_1$ and $\a_2$, are non-positive.

As outlined in the introduction, the proof of Theorem \ref{thm: main} appearing in the next section considers two domains $(\a_1,\a_2)$ and $[\a_2,0]$ for $\a$ separately. In analyzing latter case, we will make use of uniformly hyperbolic subsystems, called the basic sets, whose Lyapunov level sets and their multifractal information are better understood.

\begin{definition}\label{defn: basic}
A \textit{basic set} $\Lambda \subset T^1S$ is a compact, $G$-invariant, and locally maximal hyperbolic set on which the geodesic flow is transitive.
\end{definition}

For any basic set $\Lambda \subset T^1S$, analogous to \eqref{eq: P and E} we define 
$$
\mathcal{P}_\Lambda(t):=P_\Lambda(t\vpgeo)
 \text{ and }
\mathcal{E}_\Lambda(\a):=\inf_{t\in \mathbb{R}}(\mathcal{P}_\Lambda(t)-t\a).
$$
Basic sets are useful in our analysis because these functions capture precise information of the level sets $\L(\b) \cap \Lambda$. 
We summarize them in the following proposition. Proofs for the first four properties can be found in \cite[Proposition 6]{burns2014lyapunov} and the last property is due to Barreira and Doutor \cite{BD04}.
	
\begin{proposition}\label{prop: basic} 
Let $\Lambda \subset T^1S$ be a basic set. Setting $\alpha_1(\Lambda):=\lim\limits_{t\to -\infty}\mathcal{P}'(t)$ and $\alpha_2(\Lambda):=\lim\limits_{t\to \infty}\mathcal{P}'(t)$, we have
\begin{enumerate}
\item  $\mathcal{P}_\Lambda(t)$ is strictly convex and real analytic on $\R$.
\item For each $\a\in [\a_1(\Lambda),\a_2(\Lambda)]$, $\mathcal{P}_\Lambda(t)$ has a unique supporting line $\ell_{\Lambda,\a}$ of slope $\a$, which intersects vertical axis at $(0,\mathcal{E}(\alpha))$. 
\item For $\a\in [\a_1(\Lambda),\a_2(\Lambda)]$, we have $\mathcal{L}(-\alpha)\cap \Lambda \neq \emptyset$. Otherwise, $\mathcal{L}(-\alpha)\cap \Lambda = \emptyset$.

\item For every $t$, there is a unique equilibrium state $\mu_t$ for $t\vpgeo|_{\Lambda}$. Moreover, it satisfies $\chi(\mu_t) = -\mathcal{P}_\Lambda\,'(t)$ and
$$\mathcal{E}_\Lambda(-\chi(\mu_t)) = h(\mu_t).$$
\item For every $\a \in (\a_1(\Lambda),\a_2(\Lambda))$, we have
$$\dim_H (\L(-\a) \cap \Lambda)=1+2 \cdot \frac{\mathcal{E}_\Lambda(\a)}{-\a}$$
and 
$$h(\L(-\a) \cap \Lambda) = \mathcal{E}_\Lambda(\a) = \max\{h(\mu)\colon \chi(\mu) = -\a \text{ and }\text{supp}(\mu) \subseteq \Lambda \}.$$
\end{enumerate} 
\end{proposition}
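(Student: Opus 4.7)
The plan is to reduce all five statements to classical thermodynamic formalism and multifractal results for uniformly hyperbolic flows, applied to $G$ restricted to the basic set $\Lambda$. Since $\Lambda$ is compact, $G$-invariant, uniformly hyperbolic, transitive, and locally maximal, the stable and unstable distributions along $\Lambda$ are H\"older continuous, so $\vpgeo|_\Lambda$ is a H\"older potential; combined with a Markov coding, this places us squarely in the Bowen--Ruelle--Pollicott setting for transitive hyperbolic flows, which I would invoke throughout.

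For statement (1), real analyticity of $\mathcal{P}_\Lambda$ follows from the analytic dependence of the leading eigenvalue of the Ruelle transfer operator on H\"older potentials over a subshift of finite type coding $\Lambda$. Strict convexity reduces, via Liv\v{s}ic's theorem, to showing that $\vpgeo|_\Lambda$ is not cohomologous to a constant, which I would obtain by producing two periodic orbits in $\Lambda$ of distinct Lyapunov exponents using Lemma \ref{lem: shadowing} together with the variation of the Gaussian curvature on $\Reg$. Statement (2) is then immediate: a strictly convex $C^1$ function has a unique supporting line of every slope in the interior of the image of its derivative, and $\mathcal{E}_\Lambda(\alpha)$ equals its vertical intercept by definition of the Legendre transform.

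Statement (4) is the Bowen uniqueness theorem for H\"older potentials on transitive hyperbolic flows. The identity $\chi(\mu_t) = -\mathcal{P}_\Lambda'(t)$ comes from differentiating the variational principle $\mathcal{P}_\Lambda(t) = h(\mu_t) + t\int \vpgeo\,d\mu_t$ and using $\chi(\mu_t) = -\int \vpgeo\,d\mu_t$; plugging into the Legendre transform yields $\mathcal{E}_\Lambda(-\chi(\mu_t)) = h(\mu_t)$. Statement (3) then follows: for $\alpha$ in the interior, choose $t$ with $\mathcal{P}_\Lambda'(t) = \alpha$ so that $\mu_t$ is ergodic on $\Lambda$ with Lyapunov exponent $-\alpha$, and by the Birkhoff ergodic theorem $\mu_t$-almost every vector lies in $\mathcal{L}(-\alpha)\cap\Lambda$; the endpoints $\alpha \in \{\alpha_1(\Lambda),\alpha_2(\Lambda)\}$ are realized using compactness of the space of invariant measures supported on $\Lambda$ together with the closedness of the Lyapunov spectrum, while for $\alpha$ outside the interval the variational principle forbids any invariant measure on $\Lambda$ of the required exponent.

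The main obstacle is statement (5). The entropy identity is the conditional variational principle of Barreira--Saussol and Pesin--Weiss, which combined with statement (4) gives $h(\mathcal{L}(-\alpha)\cap\Lambda) = \max\{h(\mu) : \chi(\mu)=-\alpha,\ \mathrm{supp}(\mu)\subseteq\Lambda\} = h(\mu_t) = \mathcal{E}_\Lambda(\alpha)$. For the Hausdorff dimension, the surface setting is essential: the stable and unstable leaves along $\Lambda$ are one-dimensional, so holonomies between local transversals are $C^1$ and in particular conformal, and a Bowen-type dimension formula applies on each unstable slice of the level set. A local product decomposition of $\mathcal{L}(-\alpha)\cap\Lambda$ into stable, unstable, and flow directions, combined with the time-reversal symmetry $k^s(-v)=k^u(v)$ to equate the stable and unstable dimensions, then yields $1 + 2\cdot \mathcal{E}_\Lambda(\alpha)/(-\alpha)$. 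This is precisely what Barreira--Doutor prove in \cite{BD04}, and the plan is to invoke their theorem after verifying that our basic set $\Lambda$ satisfies their conformality hypotheses, which is automatic on surfaces.
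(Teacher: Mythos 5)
Your proposal matches the paper's approach: the paper itself defers to \cite[Proposition 6]{burns2014lyapunov} for items (1)--(4) and to Barreira--Doutor \cite{BD04} for item (5), and your outline faithfully reconstructs what those references prove (Bowen--Ruelle thermodynamic formalism and Liv\v{s}ic for (1)--(4), the conditional variational principle and the conformal dimension formula for (5)). The one subtlety worth flagging is that strict convexity in (1) presupposes $\vpgeo|_\Lambda$ is not cohomologous to a constant, which fails if $\Lambda$ degenerates to a single closed orbit; the paper's blanket statement is silent on this, and your proposed shadowing argument producing two periodic orbits of distinct Lyapunov exponents is exactly the right way to supply the needed nontriviality for the basic sets $\widetilde{\Lambda}_n$ actually used in the proof of the main theorem.
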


\section{Proof of Theorem \ref{thm: main}}\label{sec: 3}
Since we generally follow the strategies of \cite{burns2014lyapunov}, we will either omit or sketch the proofs whenever applicable and refer the reader there for details. 
\subsection{Few simple observations}
We establish some partial statements of Theorem \ref{thm: main}.
\begin{proposition}\label{prop: before phase transition} 
We have
\begin{enumerate}
    \item $\mathcal{L}(-\alpha)=\emptyset$ for every $\alpha<\alpha_{1}$ and every $\alpha>0$.
    \item For every $\alpha\in (\alpha_1,\alpha_2)$, the Lyapunov level set $\L(-\a)$ is non-empty and
    $$h(\L(-\alpha))=\mathcal{E}(\alpha).$$
        \item $\L(-\a_1)$ is non-empty. 
\end{enumerate}
\end{proposition}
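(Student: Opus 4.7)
The plan is to handle the three statements in order, with both directions of (2) being the most substantial. The common engine is the variational principle together with the identification of $\alpha_1$, $\alpha_2$ as extremal slopes of $\P$. The main obstacle I anticipate is the upper bound $h(\L(-\alpha)) \leq \E(\alpha)$ in (2), where the multifractal technicalities concentrate: Bowen's entropy for non-compact level sets must be controlled by covers adapted to a prescribed Birkhoff average, and the resulting estimate must then be tightened to the Legendre transform of $\P$. This is the standard Birkhoff-spectrum computation carried out in \cite{burns2014lyapunov}, and it relies only on the variational principle and continuity of $\vpgeo$, both of which transfer transparently to the no-focal-points setting.

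For (1) I would split at the sign of $\alpha$. When $\alpha > 0$, Proposition \ref{prop: properties}(5) shows $\|dg_t|_{E^u_v}\|$ is non-decreasing in $t \geq 0$, hence $\vpgeo \leq 0$ and every Lyapunov exponent is non-negative, so $\chi(v) = -\alpha < 0$ is impossible. When $\alpha < \alpha_1$, suppose some $v \in \L(-\alpha)$ existed; extracting any weak limit $\mu$ of the empirical averages $\frac{1}{T}\int_0^T \delta_{g_sv}\,ds$ produces $\mu \in \M(G)$ with $\int \vpgeo\,d\mu = \alpha$ by continuity of $\vpgeo$. The variational principle gives $0 \leq h(\mu) \leq \P(t) - t\alpha$ for every $t$, but convexity of $\P$ together with $\alpha_1 = \lim_{t\to -\infty}D^+\P(t)$ forces $\P(t) - t\alpha \to -\infty$ as $t \to -\infty$ whenever $\alpha < \alpha_1$, a contradiction.

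For (2), fix $\alpha \in (\alpha_1,\alpha_2)$. By Proposition \ref{prop: BG thermo and multifractal}(2), $\P'$ is continuous on $(-\infty,1)$ with limits $\alpha_1$ and $\alpha_2$ at the endpoints, so there is a unique $t \in (-\infty,1)$ with $\P'(t) = \alpha$. The equilibrium state $\mu_t$ from Proposition \ref{prop: CKP} is ergodic (otherwise any ergodic component would itself be an equilibrium state by affinity of $h$ and $\mu \mapsto \int \vpgeo\,d\mu$, contradicting uniqueness), satisfies $\chi(\mu_t) = -\alpha$, and has $h(\mu_t) = \E(\alpha)$ by Proposition \ref{prop: BG thermo and multifractal}(4). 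Birkhoff's theorem thus embeds a set of full $\mu_t$-measure inside $\L(-\alpha)$, giving non-emptiness and, via Bowen's inequality $h(\mu_t) \leq h(Z)$ for any $Z$ with $\mu_t(Z) = 1$, the lower bound $h(\L(-\alpha)) \geq h(\mu_t) = \E(\alpha)$. For the matching upper bound I would run the standard Birkhoff-spectrum argument: for $v \in \L(-\alpha)$ the average $\frac{1}{T}\int_0^T \vpgeo(g_sv)\,ds$ tends to $\alpha$, so reweighting Bowen covers by $\exp\!\big(\int_0^T t\vpgeo\,ds\big) \sim e^{t\alpha T}$ and applying the pressure variational principle to the potential $t\vpgeo$ yields $h(\L(-\alpha)) \leq \P(t) - t\alpha$ for every $t \in \R$; taking the infimum gives $h(\L(-\alpha)) \leq \E(\alpha)$.

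For (3), I would produce a Lyapunov regular point with exponent exactly $-\alpha_1$ by a compactness plus ergodic decomposition argument. Pick $t_n \to -\infty$ and pass to a weakly convergent subsequence $\mu_{t_n} \to \mu$; by continuity of $\vpgeo$ and Proposition \ref{prop: BG thermo and multifractal}(2), $\int \vpgeo\,d\mu = \lim_n \P'(t_n) = \alpha_1$, so $\chi(\mu) = -\alpha_1$. Ergodically decomposing $\mu = \int \mu_c\,d\tau(c)$, part (1) already guarantees $\chi(\mu_c) \leq -\alpha_1$ for $\tau$-a.e.\ $c$, and the identity $\int \chi(\mu_c)\,d\tau(c) = \chi(\mu) = -\alpha_1$ forces $\chi(\mu_c) = -\alpha_1$ $\tau$-a.e. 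Any such ergodic $\mu_c$ places a set of full $\mu_c$-measure inside $\L(-\alpha_1)$ by Birkhoff, establishing non-emptiness.
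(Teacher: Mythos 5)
Your argument is correct and follows essentially the same route as the paper: for (2) the lower bound via Birkhoff's theorem plus Bowen's inequality for full-measure sets applied to the (ergodic) equilibrium state $\mu_t$, the upper bound via Bowen covers adapted to the Birkhoff average of $\vpgeo$, and (3) via a weak limit of $\mu_{t_n}$ as $t_n\to-\infty$ followed by ergodic decomposition and Birkhoff. The only minor divergences are that you prove (1) directly (non-positivity of $\vpgeo$, and a weak-limit-of-empirical-measures contradiction using $\P(t)-t\alpha\to-\infty$) where the paper cites \cite{burns2014lyapunov}, and you obtain the upper bound in (2) by a direct cover estimate giving $h(\L(-\alpha))\leq\P(t)-t\alpha$ where the paper instead invokes the duality identity $\P(t)=\sup_\alpha\big(h(\L^+(-\alpha))+t\alpha\big)$ adapted from \cite{CliThe}; these are two standard packagings of the same computation.
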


\begin{proof}
The first statement, which is essentially a reformulation of the variational principle \eqref{eq: var prin}, follows from \cite[Proposition 5 (2)]{burns2014lyapunov}.
For the second statement, we first define 
$$\mathcal{L}^+(\b):=\{v \in T^1S\colon \chi^+(v) = \b\},$$
and denote by $h(\mathcal{L}^+(\beta))$ its topological entropy. Clearly we have $\mathcal{L}(\beta)\subset \mathcal{L}^+(\beta)$. By a routine adaption of the proof for \cite[Lemma 3.3.3]{CliThe} to the case of continuous flows over compact spaces, we can draw a conclusion similar to \cite[Theorem 3.1.1 (1)]{CliThe}: for every $t\in \mathbb{R}$, we have (with the convention of $h(\emptyset)=-\infty$)
$$
    \mathcal{P}(t)=\sup_{\alpha\in \mathbb{R}}(h(\mathcal{L}^+(-\alpha))+t\alpha).
$$
As an immediate consequence, we have
\begin{equation}\label{eq: E general}
    \mathcal{E}(\alpha)=\inf_{t\in \mathbb{R}}(\mathcal{P}(t)-t\a)\geq h(\mathcal{L}^+(-\alpha)) \geq h(\mathcal{L}(-\alpha)).
\end{equation}

%For every $\alpha\in (\alpha_1,\alpha_2)$ there exists $t_{\alpha}<1$ such that $\displaystyle \int \varphi^{geo}d\mu_{t_{\alpha}}=\alpha$, where $\mu_{t_{\alpha}} \in \M(G)$ is the unique equilibrium state for $t_{\alpha}\varphi^{geo}$ from Proposition \ref{prop: BG thermo and multifractal}. Therefore, the unique supporting line to $\mathcal{P}$ at $(t_{\alpha},\mathcal{P}(t_{\alpha}))$ is $\ell_{\alpha}$. 

For the reverse inequality, let $t_\a <1$ be a real number such that $\P'(t_\a) = \a$. We then have from Proposition \ref{prop: BG thermo and multifractal} (4) that
$$
\mathcal{E}(\alpha)=h(\mu_{t_{\alpha}})=\inf\{h(Z)\colon Z\subset T^1S, ~\mu_{t_{\alpha}}(Z)=1\}\leq h(\mathcal{L}(-\alpha)),
$$
as required.

The last statement follows from the fact that the Lyapunov exponent of an invariant measure $\chi(\mu)$ is defined as $-\int \vpgeo d\mu$. Indeed, taking any weak$^*$-limit $\mu \in \M(G)$ of the unique equilibrium state $\mu_{t_\a}$ for $t_\a\vpgeo$ as $\a \to \a_1$, we must have that $\chi(\mu) = -\a_1$. By applying the ergodic decomposition to $\mu$ and using the choice of $\alpha_1$, there exists an ergodic $\mu_e\in \mathcal{M}(G)$ such that $\chi(\mu_e) = -\a_1$. Then the set of generic points for $\mu_e$ is non-empty and belongs to $\mathcal{L}(-\alpha_1)$. 
\end{proof}

We conclude this subsection by noting that the above argument for \eqref{eq: E general} readily extends for any $\alpha\in [\alpha_1,0]$. Hence, for all such $\a$ we have 
$\mathcal{E}(\alpha)\geq h(\mathcal{L}(-\alpha))$.

\subsection{Remaining statements of Theorem \ref{thm: main}}
Throughout this subsection, we will assume the following proposition whose proof appears in $\mathsection$ \ref{sec: proof}.

\begin{proposition} \label{prop: Lambda sequence}
There exists an increasingly nested sequence of basic sets $\{\widetilde{\Lambda}_i\}_{i\in \mathbb{N}}$ such that for any basic set $\Lambda\subset T^1S$, there exists $n\in \mathbb{N}$ such that $\Lambda \subseteq \widetilde{\Lambda}_n$.
\end{proposition}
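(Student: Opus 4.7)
The plan is to construct the nested sequence $\{\widetilde{\Lambda}_i\}$ by exhausting the uniformly hyperbolic region through compact pieces defined using the hyperbolic index function $\lambda_T$ from \cite{chen2020unique}, and then applying Proposition \ref{prop: key} to promote each piece to a basic set.

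First, I would fix parameter sequences $T_i\nearrow\infty$ and $\eta_i\searrow 0$ and form the compact sets
$$R_i := \{v\in T^1S : \lambda_{T_i}(v)\geq \eta_i\},$$
choosing the parameters so that $R_i \subseteq R_{i+1}$ and so that the family $\{R_i\}$ is cofinal among all such sublevel sets of $\lambda_T$. Each $R_i$ lies in $\Reg$. Let $\Gamma_i$ denote the union of all closed $G$-orbits whose trace lies entirely in $R_i$, and put $\Lambda_i^0 := \overline{\Gamma_i}$. The shadowing estimates associated with $\lambda_T$ in \cite{chen2020unique}, together with Lemma \ref{lem: shadowing} applied to orbit segments with endpoints in $R_i$, imply that $\Lambda_i^0$ is a compact, $G$-invariant, uniformly hyperbolic set admitting a uniform local product structure. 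Topological transitivity on $\Lambda_i^0$ follows from a specification-type chaining: any two periodic orbits in $\Gamma_i$ can be linked by $\delta$-pseudo-orbits with endpoints in $R_i$, and Lemma \ref{lem: shadowing} produces a single shadow orbit realizing the chain.

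Second, I would apply Proposition \ref{prop: key} to each $\Lambda_i^0$ to obtain a locally maximal hyperbolic extension $\widetilde{\Lambda}_i \supseteq \Lambda_i^0$. Because Proposition \ref{prop: key} only adjoins points from a small hyperbolic neighborhood of $\Lambda_i^0$, the transitivity of $G|_{\Lambda_i^0}$ transfers to $\widetilde{\Lambda}_i$, so $\widetilde{\Lambda}_i$ is a basic set. To force the nestedness $\widetilde{\Lambda}_i\subseteq \widetilde{\Lambda}_{i+1}$, I would inductively replace $\Lambda_{i+1}^0$ with $\Lambda_{i+1}^0 \cup \widetilde{\Lambda}_i$ before applying Proposition \ref{prop: key}; the inclusion $R_i \subseteq R_{i+1}$ ensures that uniform hyperbolicity is preserved on the union.

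Third, to verify the absorption property, let $\Lambda$ be an arbitrary basic set. Compactness of $\Lambda$, together with its uniform hyperbolicity and the behavior of $\lambda_T$ on uniformly hyperbolic sets established in \cite{chen2020unique}, yields constants $T_\Lambda,\eta_\Lambda>0$ such that $\Lambda \subseteq \{v:\lambda_{T_\Lambda}(v)\geq \eta_\Lambda\}$. Pick $n$ large enough that $R_n$ contains this sublevel set, so that $\Lambda \subseteq R_n$. Every periodic orbit of $G|_\Lambda$ then lies in $\Gamma_n$; since periodic orbits are dense in $\Lambda$ by the Anosov closing lemma applied on $\Lambda$, we conclude $\Lambda \subseteq \Lambda_n^0 \subseteq \widetilde{\Lambda}_n$.

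The main obstacles I foresee are two-fold. First, passing from the non-invariant level set $R_i$ to a $G$-invariant uniformly hyperbolic set $\Lambda_i^0$ requires the quantitative shadowing estimates for $\lambda_T$ from \cite{chen2020unique}; the subtlety flagged in Remark \ref{rem: difference in setting}---that $\lambda$ lacks monotonicity along orbits in the no-focal-point setting---is precisely why one must work with the time-averaged $\lambda_T$ rather than with $\lambda$ itself. Second, one must ensure that the extension supplied by Proposition \ref{prop: key} neither destroys transitivity nor breaks the nested inclusions when iterated; this is where careful bookkeeping of the local product structure neighborhoods at a uniform scale is needed, and is the step I expect to demand the most care.
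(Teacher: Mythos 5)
The overall skeleton of your construction parallels the paper's (exhaust $\Reg$ by compact pieces, take closures of periodic orbits inside them, apply Proposition \ref{prop: key}, and verify the absorption property using density of periodic orbits in a basic set), but there is a genuine gap in how you obtain transitivity, and it occurs at two points.

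First, you claim $\Lambda_i^0 = \overline{\Gamma_i}$ is transitive by chaining periodic orbits and invoking Lemma \ref{lem: shadowing}. But the shadowing lemma only produces an orbit that is $\ep$-close to the chain; it gives no reason for that orbit to lie inside $R_i$, hence no reason for it to belong to $\Gamma_i$ or even to $\Lambda_i^0$. In fact, $\Lambda_i^0$ is in general not transitive (the paper makes no such claim for the analogous set $\wh{\Lambda}_n$). Second, and more seriously, you assert that applying Proposition \ref{prop: key} preserves transitivity because the extension ``only adjoins points from a small hyperbolic neighborhood.'' This is false in general: a locally maximal hyperbolic extension typically adjoins a horseshoe-like set whose non-wandering part can decompose into several transitive pieces, and the original set can fail to be one of them. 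Proposition \ref{prop: key} in this paper guarantees local maximality, not transitivity.

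The paper resolves both issues without ever proving transitivity of the intermediate sets: after applying Proposition \ref{prop: key} to $\wh{\Lambda}_n$ to get a locally maximal hyperbolic set, it passes to the non-wandering set, invokes Smale's spectral decomposition theorem to write it as a finite union of basic sets, and then uses Lemma \ref{lem: bridging} (which does rely on Lemma \ref{lem: shadowing}) to merge those finitely many basic sets into a single basic set $\wt{\Lambda}_n$; another application of Lemma \ref{lem: bridging} arranges $\wt{\Lambda}_{n-1} \subset \wt{\Lambda}_n$. Your argument as written does not invoke spectral decomposition or the bridging lemma and thus leaves the passage from ``closed, invariant, locally maximal, hyperbolic'' to ``basic'' unjustified. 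If you insert those two ingredients in place of your transitivity claims, the rest of your proposal (using $\Reg_{T_i}(\eta_i)$ in place of the complement of $N_{1/n}(\Sing)$, and the absorption argument via density of periodic orbits in a basic set) goes through and is essentially the same as the paper's proof.
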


As we will see, such a family $\{\wt{\Lambda}_n\}_{n\in \N}$ of basic sets is used to extract information of the Lyapunov level sets. It is used in showing that $\L(-\a)$ is non-empty for $\a \in (\a_1,0)$. 
Moreover, it is also used in showing that $\mathcal{P}_{n}(t):=\mathcal{P}_{\wt{\Lambda}_n}(t)$ converges to $\mathcal{P}(t)$ for each $t$. This then implies that the supporting line $\ell^n_\a$ to $\mathcal{P}_n(t)$ of slope $\a$ converges to the supporting line $\ell_\a$ to $\mathcal{P}(t)$ of slope $\a$ for all $\alpha\in (\alpha_1,0)$. This observation then translates to the statements of Theorem \ref{thm: main}.

We begin by relating the Lypaunov exponent $\chi(v)$ of a Lyapunov regular vector $v\in T^1S$ to the solution of the Riccati equation over $\gamma_v$.
For any orthogonal Jacobi field $J$, it follows from the Jacobi equation \eqref{eq: jacobi surface} that $u:=J'/J$ satisfies the \textit{Riccati equation} given by
\begin{equation}\label{eq: riccati}
u'(t)+u(t)^2+K(\gamma(t)) = 0.
\end{equation}
Then the Lyapunov exponent $\chi(v)$ may be described by
\begin{equation}\label{eq: chi(v) formula}
\chi(v)=\lim_{T\rightarrow{\infty}}\frac{1}{T}\int_0^T u(t)\,dt,
\end{equation}
where $\displaystyle u(t):=J_{\xi}'(t)/J_{\xi}(t)$ with $\xi\in E^u_v$ is a solution to the Riccati equation \eqref{eq: riccati}; see \cite[Lemma 2.6]{burns2014lyapunov}. 
Such a description provides an upper bound for the Lyapunov exponent of a closed geodesic.

\begin{lemma}\label{lem: chi(v) periodic}\cite[Lemma 2.9]{burns2014lyapunov} For any closed geodesic $(v,t) \in T^1S \times [0,\infty)$, we have 
$$\chi(v)\leq \sqrt{-\frac{1}{t}\int_0^{t}K(\g_v(s))\,ds}.$$
\end{lemma}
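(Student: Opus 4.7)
The plan is to exploit the periodicity of the Riccati solution associated to the unstable Jacobi field along the closed geodesic, together with Cauchy--Schwarz.

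First I would set up the relevant periodic function. Let $\gamma = \gamma_v$ be a closed geodesic of period $t$, and let $J^u$ be the unstable orthogonal Jacobi field along $\gamma$ with $J^u(0) = 1$. By the identity $(J^u)'(s) = k^u(g_s v)\, J^u(s)$ recalled in \eqref{eq: J and k}, the function $u(s) := (J^u)'(s)/J^u(s) = k^u(g_s v)$ is continuous, non-negative (by Proposition \ref{prop: properties}(5), $\|J^u\|$ is non-decreasing, so $(J^u)' \geq 0$), and $t$-periodic, since $g_t v = v$. By \eqref{eq: chi(v) formula}, the Birkhoff average of $u$ along the orbit converges and, by periodicity, equals the average over a single period:
$$\chi(v) = \frac{1}{t}\int_0^t u(s)\, ds.$$

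Next I would integrate the Riccati equation \eqref{eq: riccati} over one period. Periodicity of $u$ forces $\int_0^t u'(s)\, ds = u(t) - u(0) = 0$, so integrating $u'(s) + u(s)^2 + K(\gamma(s)) = 0$ over $[0,t]$ yields
$$\int_0^t u(s)^2\, ds \;=\; -\int_0^t K(\gamma(s))\, ds.$$
In particular, the right-hand side is non-negative.

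Now I would apply the Cauchy--Schwarz inequality, which gives
$$\left(\int_0^t u(s)\, ds\right)^2 \leq t \int_0^t u(s)^2 \, ds = -t \int_0^t K(\gamma(s))\, ds.$$
Dividing by $t^2$, taking square roots (using $\chi(v) \geq 0$), and combining with the expression for $\chi(v)$ above yields the claimed bound.

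I do not anticipate any serious obstacle: the argument is essentially a one-line application of Cauchy--Schwarz, and the only delicate points are (i) checking that the relevant Riccati solution is genuinely periodic (which uses $G$-invariance of $E^u$ and the closed orbit condition $g_t v = v$), and (ii) observing that $\chi(v) \geq 0$ so that taking square roots is harmless, both of which follow immediately from Proposition \ref{prop: properties} and \eqref{eq: J and k}.
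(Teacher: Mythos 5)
Your proof is correct and is the standard argument from the cited \cite[Lemma 2.9]{burns2014lyapunov}; the paper itself does not re-prove the lemma but only cites it. The remark immediately following the lemma statement already carries out your central Riccati integration step, $\int_0^t u(s)^2\,ds = -\int_0^t K(\gamma_v(s))\,ds$ (in the service of showing the right-hand side is well-defined), and your application of Cauchy--Schwarz correctly completes that calculation to the stated inequality.
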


The right hand side of the expression in the lemma above is well-defined 
because for such a closed geodesic we have $u(s)=u(s+t)$ for all $s\in \mathbb{R}$. This then implies that $\displaystyle \int_0^t u'(s)\,ds=0$. Plugging this into \eqref{eq: riccati} gives
$\displaystyle \int_0^t u^2(s)+K(\gamma_v(s))\,ds=0,
$
and hence
$$
    \int_0^t K(\gamma_v(s))ds=-\int_0^t u^2(s)\,ds\leq 0.
$$

Using  Lemma \ref{lem: shadowing} and \ref{lem: chi(v) periodic}, we obtain closed geodesics with arbitrarily small Lyapunov exponents:
\begin{proposition} \label{small Lya} 
If $\Sing$ is non-empty, then there exist closed geodesics with Lyapunov exponents arbitrarily close to 0 and $-\a_1$. 
\end{proposition}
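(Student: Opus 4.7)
The two claims are handled independently. For Lyapunov exponents close to $0$, I will close up orbit segments that spend most of their time in a tube around a singular orbit and then apply the curvature bound of Lemma \ref{lem: chi(v) periodic}. For exponents close to $-\a_1$, I will use the nested basic sets of Proposition \ref{prop: Lambda sequence} and approximate the most hyperbolic equilibrium measures by closed geodesics living on them.

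\textbf{Closed geodesics with $\chi$ near $0$.} Fix a hyperbolic periodic vector $v_0\in T^1S$ (which exists by Proposition \ref{prop: Lambda sequence}) and choose $\eta>0$ so that $v_0\in \Reg(2\eta)$; by continuity of $\lambda$, a small neighborhood $U$ of $v_0$ lies in $\Reg(\eta)$. Fix any $w\in \Sing$, for which $K(\pi g_sw)\equiv 0$ by \eqref{eq: char Sing by K}. Given $T\gg 1$ and $\delta>0$, topological transitivity of the geodesic flow (Proposition \ref{prop: properties}(6)) together with uniform continuity of $G$ on bounded time intervals lets me find $v_*\in U$ and $T_*$ of order $T$ such that $g_{T_*}v_*\in U$ and the trajectory $\{g_sv_*:s\in[s_0,s_0+T]\}$ remains within a $\delta$-tube of $\gamma_w|_{[0,T]}$ for some $s_0$; the two connector pieces (from $U$ into the tube and back into $U$) have length bounded independently of $T$. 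Applying the shadowing lemma (Lemma \ref{lem: shadowing}) to the periodic pseudo-orbit obtained by iterating $(v_*,T_*)$ yields a closed geodesic $(v,t)$ of period $t\approx T_*$ that stays within $\ep$ of $\gamma_{v_*}|_{[0,T_*]}$. Continuity of $K$ and the pointwise vanishing of $K$ along $\gamma_w$ then make $-\tfrac{1}{t}\int_0^t K(\g_v(s))\,ds$ arbitrarily small (bounded integrand on bounded connectors, arbitrarily small integrand on the long middle segment), and Lemma \ref{lem: chi(v) periodic} concludes $\chi(v)\to 0$.

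\textbf{Closed geodesics with $\chi$ near $-\a_1$.} Let $\{\wt{\Lambda}_n\}$ be the nested basic sets from Proposition \ref{prop: Lambda sequence} and write $\P_n:=\P_{\wt{\Lambda}_n}$. I will first show $\P_n(t)\to \P(t)$ for every $t<1$. The bound $\P_n\leq \P$ is immediate from $\wt{\Lambda}_n\subset T^1S$. For the reverse, the equilibrium state $\mu_t$ from Proposition \ref{prop: CKP} is hyperbolic for $t<1$, so Katok's horseshoe approximation produces basic sets $\Lambda$ with $\P_\Lambda(t)$ arbitrarily close to $\P(t)$; by Proposition \ref{prop: Lambda sequence} each such $\Lambda$ lies in some $\wt{\Lambda}_n$. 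Pointwise convergence of convex functions then implies $\P_n'(t)\to \P'(t)$ at all points of differentiability of $\P$. Using the variational characterization $\a_1=-\sup_{\mu\in\M(G)}\chi(\mu)$ and $\M(G|_{\wt{\Lambda}_n})\subset \M(G)$, one obtains $\a_1(\wt{\Lambda}_n)\geq \a_1$; combined with the convergence of derivatives at some very negative $t^*$, the monotonicity of $\P_n'$ forces $\a_1(\wt{\Lambda}_n)\to \a_1$. Given $\ep>0$, pick $n$ with $\a_1(\wt{\Lambda}_n)<\a_1+\tfrac{\ep}{2}$ and $\a\in(\a_1(\wt{\Lambda}_n),\a_2(\wt{\Lambda}_n))$ with $\a<\a_1+\tfrac{\ep}{2}$. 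Proposition \ref{prop: basic}(4) supplies an invariant measure $\mu$ on $\wt{\Lambda}_n$ with $\chi(\mu)=-\a$, and since $\wt{\Lambda}_n$ is a uniformly hyperbolic basic set, Sigmund's theorem makes periodic-orbit measures weak$^*$-dense in $\M(G|_{\wt{\Lambda}_n})$; weak$^*$-continuity of $\mu\mapsto\chi(\mu)$ on $\wt{\Lambda}_n$ then produces a closed geodesic $v\subset \wt{\Lambda}_n$ with $|\chi(v)-(-\a)|<\tfrac{\ep}{2}$, giving $|\chi(v)+\a_1|<\ep$.

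\textbf{Main obstacle.} The subtle step in the first part is that by continuity of $\lambda$ the compact set $\Reg(\eta)$ sits at a strictly positive distance from $\Sing$, so the endpoints prescribed by Lemma \ref{lem: shadowing} cannot themselves be placed near $w$. The resolution is that the shadowing lemma constrains only the endpoints of each segment, leaving the interior free to drift arbitrarily close to $\Sing$; one must therefore arrange the pseudo-orbit so that the endpoints stay inside a small neighborhood of a fixed hyperbolic periodic vector while the long middle piece tracks the singular orbit. In the second part, the corresponding technical point is the pressure convergence $\P_n\to \P$, which I address via Katok's horseshoe theorem applied to the hyperbolic equilibrium states of Proposition \ref{prop: CKP}.
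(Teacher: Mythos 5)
Your two arguments take genuinely different routes from the paper, and the first one has a gap.

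\textbf{Closed geodesics with $\chi$ near $0$.} You track a single singular orbit $\gamma_w$ for a long time $T$, close up the resulting pseudo-orbit with two ``connector'' pieces into $U\ni v_0$, and assert that these connectors have length bounded independently of $T$. This is not something topological transitivity gives you. The issue is the return leg: as $T$ increases you land near $g_Tw$, a point that moves around a (possibly non-periodic) singular orbit, and transitivity only says that \emph{some} nearby point eventually enters $U$; there is no uniform bound on that return time over all $T$, and the required initial proximity to $w$ (hence the approach time from $U$ to the tube) must also shrink as $T$ grows. Without a bound on the connector length the time-average of $K$ over the closed geodesic need not go to $0$. The paper avoids this entirely: it chooses long orbit segments $(v_n,t_n)$ contained in $\{|K(\pi\cdot)|\leq\ep\}$ with endpoints on $\{|K(\pi\cdot)|=\ep\}\subset\Reg(\eta)$, passes to a subsequence $v_n\to v$, $g_{t_n}v_n\to w$, and uses one fixed connector from near $w$ to near $v$ that works for all large $n$. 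Your construction could be repaired the same way (pass to a subsequence of your landing points and use a fixed connector), but as written the step is unjustified.

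\textbf{Closed geodesics with $\chi$ near $-\a_1$.} Here you take a different and valid route: prove $\P_n(t)\to\P(t)$ for $t<1$ via Katok's horseshoe applied to the hyperbolic equilibrium states of Proposition~\ref{prop: CKP}, deduce $\a_1(\wt\Lambda_n)\to\a_1$ from convexity and $\a_1=-\sup_\mu\chi(\mu)$, then use Proposition~\ref{prop: basic}(4) together with Sigmund's density of periodic measures on a basic set and continuity of $\mu\mapsto\chi(\mu)$. There is no circularity, since the $t<1$ half of the pressure convergence does not invoke the proposition you are proving (only the $t\geq 1$ half in Proposition~\ref{prop: main Lambda convergence} does). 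This is heavier machinery than the paper's argument, which is more elementary and self-contained: starting from a fixed $v_0\in\L(-\a_1)$ (supplied by Proposition~\ref{prop: before phase transition}), it analyzes the Riccati solution along $\gamma_{v_0}$ to find returns to a negative-curvature region with the Birkhoff averages of $-\vpgeo$ over $[s_n,t_n]$ converging to $-\a_1$, then closes up via the shadowing lemma. Your route buys you a slick reduction to uniformly hyperbolic subsystems and standard Anosov facts; the paper's route buys you independence from Proposition~\ref{prop: Lambda sequence} and Katok's theorem, using only the shadowing lemma and a direct computation.
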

\begin{proof} We treat two cases separately.
Let $\ep>0$ be arbitrary. 
Recalling the nested family of compact subsets $\Reg(\eta)$ defined below Remark \ref{rem: difference in setting}, Proposition \ref{prop: lambda properties} shows that their union $\bigcup\limits_{\eta>0} \Reg(\eta)$ contains the set $\{v \in T^1S \colon K(\pi v) \neq 0\}$.
In particular, there exists $\eta=\eta(\ep)>0$ such that $$\{v\in T^1S: |K(\pi v)| \geq \ep\}\subseteq \text{Reg}(\eta).$$

From the transitivity of the geodesic flow (Proposition \ref{prop: properties}), we can then choose a sequence of orbits segments $\{(v_n,t_n)\}_{n\in \N}$ lying entirely in the set $\{v\in T^1S: |K(\pi v)| \leq \ep\}$ such that the footprints of both their endpoints $v_n$ and $w_n:=g_{t_n}v_n$ have the Gaussian curvature equal to either $\ep$ or $-\ep$ and that $t_n \to \infty$. By passing to a subsequence if necessary, suppose that $v_n \to v$ and $w_n \to w$.

Again from the transitivity, we can find an orbit segment starting near $w$ and terminating near $v$. Then using the shadowing lemma (Lemma \ref{lem: shadowing}) applied to $\Reg(\eta/2)$, we obtain a closed geodesic which shadows $v_n$ to $w_n$ (for some sufficiently large $n\in \N$) followed by $w$ to $v$. This closed geodesic spends most of its time in $\{v\in T^1S: |K(\pi v)| \leq 2\ep\}$, and hence, its Lyapunov exponent can be bounded above by $2\sqrt{\ep}$ using Lemma \ref{lem: chi(v) periodic}. Since $\ep>0$ was arbitrary, this constructs closed geodesics whose Lyapunov exponents are arbitrarily close to 0. See \cite[Proposition 3]{burns2014lyapunov} for details.

For closed geodesics with Lyapunov exponents arbitrarily close to $-\a_1$, we fix any $v_0\in \mathcal{L}(-\alpha_1)$ and consider the solution $u(t)$ to the Riccati equation \eqref{eq: riccati} over $\gamma_{v_0}$. The existence of such $v_0\in T^1S$ is guaranteed from Proposition \ref{prop: before phase transition}.
Since $\displaystyle -\alpha_1=\chi(v_0)=\lim_{T\rightarrow{\infty}}\frac{1}{T}\int_0^T u(t)\,dt$, it is not hard to see that $\liminf\limits_{t\rightarrow \pm\infty}K(\gamma_{v_0}(t))<0$ by studying the evolution of $u(t)$. In particular, there exists $\ep>0$ such that $g_tv_0$ enters $\{v\in T^1S: K(\pi v) \leq -\ep\}$ infinitely often in both forward time at $\{t_n\}_{n\in \N} \subset \R_+$ and backward time $\{s_n\}_{n\leq 0} \subset \R_-$. Moreover, we have
\begin{equation}\label{eq: sntn}
\frac{1}{t_n-s_n} \int_{s_n}^{t_n}-\vpgeo (g_\tau v_0)\,d\tau \to -\a_1.
\end{equation}

Using \eqref{eq: sntn} instead of Lemma \ref{lem: chi(v) periodic}, the rest of the proof can be completed as above. By passing to a subsequence if necessary, we may assume that $g_{s_n}v_0 \to v$ and $g_{t_n}v_0 \to w$ and fix an orbit segment starting near $w$ and terminating near $v$. Then it follows from \eqref{eq: sntn} that the Lyapunov exponent of the closed geodesic obtained from shadowing $v_n$ to $w_n$ followed by $w$ to $v$ limits to $-\a_1$ as $n\to \infty$.
\end{proof}

Recall that $\ell_\a$ is the supporting line to $\mathcal{P}(t)$ of slope $\a$.
The following proposition shows that $\mathcal{P}_n(t)$ and $\ell^n_\a$ converge to $\mathcal{P}(t)$ and $\ell_\a$, respectively. Its proof makes uses of the Katok's horseshoe theorem (see \cite{katok1980lyapunov} and also \cite{gelfert2016horseshoes,burns2014lyapunov}) 	which states that for any $\ep>0$, hyperbolic ergodic measure $\mu \in \M(G)$, and potential $\vp \colon T^1S \to \R$, there exists a basic set $\Lambda \subseteq T^1S$ such that $$P_{\Lambda}(\varphi)>P_{\mu}(\varphi)-\ep$$
where $\displaystyle P_\mu(\vp):= h_{\mu}(G)+\int \vp \,d\mu$.

\begin{proposition}\label{prop: main Lambda convergence}
The sequence of basic sets $\{\wt{\Lambda}_n\}_{n\in \mathbb{N}}$ from Proposition \ref{prop: Lambda sequence} satisfies $$\mathcal{P}_n(t):=\mathcal{P}_{\wt{\Lambda}_n}(t)\nearrow\mathcal{P}(t)$$ pointwise. Moreover, $\ell^n_\a$ converges to $\ell_\a$ for each $\a \in (\a_1,0).$
\end{proposition}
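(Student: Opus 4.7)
The plan is to establish the pointwise convergence $\mathcal{P}_n(t) \nearrow \mathcal{P}(t)$ for every $t \in \R$ via Katok's horseshoe theorem and Proposition \ref{prop: Lambda sequence}, and then to deduce the convergence of supporting lines from convex analysis. The monotonicity $\mathcal{P}_n(t) \leq \mathcal{P}_{n+1}(t) \leq \mathcal{P}(t)$ is an immediate consequence of the variational principle \eqref{eq: var prin} together with the inclusions $\wt{\Lambda}_n \subseteq \wt{\Lambda}_{n+1} \subseteq T^1S$, so only the matching lower bound requires real work.

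For this lower bound, I would fix $t \in \R$ and $\ep > 0$, and produce an ergodic hyperbolic measure $\mu \in \M(G)$ satisfying $P_\mu(t\vpgeo) > \mathcal{P}(t) - \ep/2$. Once $\mu$ is in hand, Katok's theorem supplies a basic set $\Lambda$ with $\mathcal{P}_\Lambda(t) > P_\mu(t\vpgeo) - \ep/2 > \mathcal{P}(t) - \ep$, and Proposition \ref{prop: Lambda sequence} embeds $\Lambda$ into some $\wt{\Lambda}_n$, yielding $\mathcal{P}_n(t) > \mathcal{P}(t) - \ep$. The construction of $\mu$ splits into two regimes. For $t < 1$, I would take $\mu = \mu_t$ from Proposition \ref{prop: CKP}, which attains $P_{\mu_t}(t\vpgeo) = \mathcal{P}(t)$ exactly and is hyperbolic because $\chi(\mu_t) = -\mathcal{P}'(t) > 0$ by Proposition \ref{prop: BG thermo and multifractal}(2); ergodicity follows from uniqueness. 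For $t \geq 1$, where $\mathcal{P}(t) = 0$, I would instead invoke Proposition \ref{small Lya} to produce a closed geodesic whose Lyapunov exponent is smaller than $\ep/(2\max(t,1))$; its periodic orbit measure $\mu$ satisfies $h(\mu) = 0$, so $P_\mu(t\vpgeo) = -t\chi(\mu) > -\ep/2$.

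For the convergence $\ell^n_\a \to \ell_\a$ at $\a \in (\a_1, 0)$, note that both lines have slope $\a$, with $\ell_\a(0) = \mathcal{E}(\a)$ and $\ell^n_\a(0) = \mathcal{E}_n(\a)$, so the task reduces to showing $\mathcal{E}_n(\a) \to \mathcal{E}(\a)$. The bound $\mathcal{E}_n(\a) \leq \mathcal{E}(\a)$ is immediate from $\mathcal{P}_n \leq \mathcal{P}$. For the reverse, monotone pointwise convergence of convex functions upgrades to locally uniform convergence on $\R$ and to pointwise convergence of derivatives on the open set where $\mathcal{P}$ is differentiable (i.e.\ $\R \setminus \{1\}$, by Proposition \ref{prop: BG thermo and multifractal}(2)). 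This in turn forces the slope interval $(\a_1(\wt{\Lambda}_n), \a_2(\wt{\Lambda}_n))$ to contain any fixed $\a \in (\a_1, 0)$ for $n$ large, so the infimum defining $\mathcal{E}_n(\a)$ is attained on a compact $t$-interval. A standard compactness argument combined with the locally uniform convergence of $\mathcal{P}_n$ then produces $\liminf \mathcal{E}_n(\a) \geq \mathcal{E}(\a)$.

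The main obstacle will be producing the approximating hyperbolic measure $\mu$ in the regime $t \geq 1$, since $\mathcal{P}(t) = 0$ there is realized only by measures supported essentially on $\Sing$, which carry no hyperbolicity and are therefore invisible to Katok's theorem. The geometric resolution is precisely Proposition \ref{small Lya}, which uses the shadowing lemma on the uniformly hyperbolic regions $\Reg(\eta)$ to build closed hyperbolic geodesics whose Lyapunov exponents are arbitrarily small, thus mimicking the zero-exponent behavior seen on the singular set while remaining amenable to Katok's machinery.
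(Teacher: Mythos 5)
Your proof of the pointwise convergence $\mathcal{P}_n \nearrow \mathcal{P}$ follows the paper exactly: split into $t<1$ (using the unique hyperbolic equilibrium state $\mu_t$ from Proposition \ref{prop: CKP}) and $t \geq 1$ (using Proposition \ref{small Lya} to produce a periodic orbit with small Lyapunov exponent), then invoke Katok's horseshoe theorem and Proposition \ref{prop: Lambda sequence}. For the convergence of supporting lines the paper simply cites \cite[Proposition 12]{burns2014lyapunov}; your convex-analysis argument fills in that citation and is sound, so the two proofs are essentially the same in substance.
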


\begin{proof}
Let $\ep>0$ be given. We divide the proof into two cases $t<1$ and $t \geq 1$. 

For $t <1$, let $\mu_t \in \M(G)$ be the unique equilibrium state of $t\vpgeo$ which is necessarily hyperbolic; see \cite{chen2020unique}. Applying Katok's horseshoe theorem to $\mu_t$ and $t\vpgeo$ produces a basic set $\Lambda$ satisfying
 $$P_{\Lambda}(t\vpgeo)>P_{\mu_t}(t\vpgeo)-\ep=\P(t)-\ep.$$ Then Proposition \ref{prop: Lambda sequence} implies that $\P_{n}(t) \geq \P(t)-\ep$ for all $n$ large enough. 

When $t\geq 1$, note that $\P(t) = 0$ from Proposition \ref{prop: BG thermo and multifractal}. Proposition \ref{small Lya} produces a hyperbolic measure $\mu_{\ep} \in \M(G)$ whose Lyapunov exponent $\chi(\mu_\ep)$ lies in the interval $(0,\ep/t)$. Since $\mu_\ep$ is supported on a closed geodesic, it has zero entropy, and hence $P_{\mu_{\ep}}(t\vpgeo)$ is equal to $-t\chi(\mu_\ep)$. By applying Katok's horseshoe theorem to $\mu_\ep$ and $t\vpgeo$, we obtain a basic set $\Lambda$ such that $$\P_{\Lambda}(t)>P_{\mu_{\ep}}(t\vpgeo)-\ep>-2\ep.$$ Proposition \ref{prop: Lambda sequence} then implies that $\P_{n}(t)>-2\ep$ for all sufficiently large $n$. Since $\ep$ was arbitrary, this completes the proof of the first statement.

The proof of the second statement can be found in \cite[Proposition 12]{burns2014lyapunov}.
\end{proof}

We will prove each statement of Theorem \ref{thm: main} separately.

\begin{proof}[Proof of the first statement of Theorem \ref{thm: main}]
From Proposition \ref{prop: before phase transition}, $\L(-\a)$ is empty if $\a \in (-\infty,\a_1)$ and $\a \in (0,\infty)$. From the same proposition, $\mathcal{L}(-\alpha)$ is non-empty for all $\alpha\in [\alpha_1,\alpha_2)$. Moreover, $\mathcal{L}(0)$ is non-empty as $\Sing\subset \mathcal{L}(0)$; see Remark \ref{rem: Sing in L0}. Therefore, it suffices to show that $\L(-\a)$ is non-empty for $\a \in [\a_2,0)$.

In fact, the following method is general enough that it shows the domain in which $\L(-\a)$ is non-empty contains a closed interval whose endpoints are arbitrarily close to $\a_1$ and $0$.
Using Proposition \ref{small Lya}, we begin by choosing closed geodesics with the Lyapunov exponents arbitrarily close to 0 and $-\a_1$, each of which is a basic set. Then the increasingly nested family $\{\wt{\Lambda}_n\}_{n\in \N}$ from Proposition \ref{prop: Lambda sequence} eventually contains both such closed geodesics for all sufficiently large $n\in \N$. The claim then follows from the fact that the set of $\b \in \R$ in which $\Lambda \cap \L(\b)$ is non-empty is a closed interval for any basic set $\Lambda$; see Proposition \ref{prop: basic}.
\end{proof} 

In view of Proposition \ref{prop: before phase transition} and the comment after its proof, the only thing left to show in Theorem \ref{thm: main} is the lower bound for the entropy and dimension spectrum. Again, the proof of the second statement of Theorem \ref{thm: main} relies on the increasingly sequence of basic sets $\{\widetilde{\Lambda}_n\}_{n\in \mathbb{N}}\subset \Reg$ constructed in Proposition \ref{prop: Lambda sequence}. Similar to \eqref{eq: P and E}, we define $$\mathcal{E}_{n}(\alpha)=\mathcal{E}_{\widetilde{\Lambda}_n}(\alpha):=\inf_{t\in \mathbb{R}}(\mathcal{P}_{n}(t)-t\alpha)$$ as the Legendre transform of $\mathcal{P}_n$ at $\alpha$.

\begin{proof}[Proof of the second statement of Theorem \ref{thm: main}]
From Proposition \ref{prop: main Lambda convergence}, the supporting lines $\ell^n_\a$ to $\mathcal{P}_n(t)$ converges to the supporting line $\ell_\a$ to $\mathcal{P}(t)$ for each $\a \in (\a_1,0)$. Since $\ell_\a$ intersects the vertical axis at $(0,\mathcal{E}(\a))$, and likewise for $\ell^n_\a$ at $(0,\mathcal{E}_n(\a)$), it follows that $\mathcal{E}_n(\a)$ also monotonically converges to $\mathcal{E}(\a)$ for all $\a \in (\a_1,0)$. Using Proposition \ref{prop: basic}, this then translates to 
$$h(\L(-\a))\geq \lim_{n\rightarrow \infty}h(\L(-\alpha) \cap \wt{\Lambda}_n)=\lim_{n\rightarrow \infty}\mathcal{E}_n(\alpha)=\mathcal{E}(\alpha)$$ and $$\dim_H(\L(-\a))\geq \lim_{n\rightarrow \infty}\dim_H(\L(-\a) \cap \wt{\Lambda}_n)= \lim_{n\rightarrow \infty}1+2\cdot \frac{\mathcal{E}_{n}(\alpha)}{-\alpha}=1+2\cdot \frac{\mathcal{E}(\alpha)}{-\alpha},$$ resulting in the required lower bounds for $h(\L(-\a))$ and $\dim_H(\L(-\a))$.
\end{proof}

\section{Proof of Proposition \ref{prop: Lambda sequence}}\label{sec: proof}

This section is devoted to the proof of Proposition \ref{prop: Lambda sequence}. In doing so, we will make use of a family of non-negative functions $\lt$ defined for each $T>0$. It first appeared in \cite{chen2020unique} and is closely related to $\lambda$:
$$
\lt(v) := \int^T_{-T} \lambda(g_tv)dt. 
$$
From Proposition \ref{prop: lambda properties}, if $\lt(v)=0$ for all $T>0$, then $v \in \Sing$. In particular, setting 
$$\Reg_T(\eta):=\{v \in T^1S \colon \lt(v) \geq \eta\},$$
we have
$$
\Reg = \bigcup\limits_{T,\eta>0} \Reg_T(\eta).
$$
Compared to $\Reg(\eta)$ which cannot exhaust the regular set, this has an advantage of being able to find $T,\eta>0$ such that $\Reg_T(\eta)$ contains a given compact subset of the regular set.

\subsection{Criteria for hyperbolicity}
Let 
$k_{\max} := \max\limits_{v\in T^1S}k^u(v)=\max\limits_{v\in T^1S}k^s(v)$
be the maximum geodesic curvature on $T^1S$.
It can be used in comparing $\lambda$ and $\lambda_T$ (see \cite[Subsection 4.2]{chen2020unique}): for any $v\in T^1S$ and $t >0$, we have
$$
\int_0^t \lambda(g_s v)\,ds \geq \frac{1}{2T}\int_0^t \lambda_T(g_s v)\,ds-2T k_{\text{max}}
$$
This inequality can be used in the following lemma to establish the uniform hyperbolicity on $\Reg_T(\eta)$ with the constant $C:=\exp(2Tk_{\max})$. We omit the proof as it is an easy generalization of \cite[Lemma 3.1]{burns2018unique} and \cite[Lemma 4.5]{chen2020unique}.
\begin{lemma}\label{lem: dist formula}
For any $T,\eta>0$, there exists $C>0$ such that the following holds: for any $v\in T^1S$, $w,w' \in W^s(v)$, and $t \geq 0$, suppose that the segment connecting $w$ and $w'$ along $W^s(v)$ remains in $\Reg_T(\eta)$ under the action of $g_\tau$ for all $\tau \in [0,t]$.
Then
$$d^s(g_tw,g_tw') \leq  C\cdot d^s(w,w') \cdot \exp\Big(-\frac{\eta t}{2T}\Big).$$
Similarly, for any $v\in T^1S$, $w,w' \in W^u(v)$, and $t \geq 0$ such that the segment connecting $w$ and $w'$ along $W^u(v)$ remains in $\Reg_T(\eta)$ under the action of $g_{-\tau}$ for all $\tau \in [0,t]$, 
$$d^u(g_{-t}w,g_{-t}w') \leq  C\cdot d^u(w,w') \cdot \exp\Big(-\frac{\eta t}{2T}\Big).$$
\end{lemma}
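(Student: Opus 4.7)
The plan is to reduce the intrinsic-distance bound to a pointwise bound on the norms of stable Jacobi fields along the segment, and then to apply the scalar Riccati-type ODE together with the comparison inequality stated immediately before the lemma. I do not expect any substantial obstacle here; the argument is a routine generalization of \cite[Lemma 3.1]{burns2018unique} and \cite[Lemma 4.5]{chen2020unique}, the only new ingredient being the replacement of $\lambda$ by $\lt$ via that inequality.

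First, I would parametrize the segment of $W^s(v)$ from $w$ to $w'$ by its intrinsic arc length, obtaining a curve $\gamma\colon [0,L]\to W^s(v)$ with $L=d^s(w,w')$ and $\gamma'(s)\in E^s(\gamma(s))$ of unit norm. Under the identification $T_vT^1S\simeq \J(\gamma_v)$ recalled in Section~\ref{sec: prelim}, each $\gamma'(s)$ corresponds to a stable Jacobi field $J_s\in\J^s(\gamma_{\gamma(s)})$ with $\|J_s(0)\|=1$. Since $g_t$ sends the segment to a curve in $W^s(g_tv)$ whose length majorizes $d^s(g_tw,g_tw')$, we obtain
$$
d^s(g_tw,g_tw')\leq \int_0^L \|J_s(t)\|\,ds.
$$
On a surface, $J_s$ reduces to a positive scalar satisfying the stable analogue of \eqref{eq: J and k}, namely $(J_s)'(\tau)=-k^s(g_\tau\gamma(s))J_s(\tau)$, which integrates to $J_s(t)=\exp\bigl(-\int_0^t k^s(g_\tau\gamma(s))\,d\tau\bigr)$. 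Combining $k^s\geq \lambda$ (immediate from $\lambda=\min\{k^s,k^u\}$) with the hypothesis that the segment stays in $\Reg_T(\eta)$ for all $\tau\in[0,t]$, the comparison inequality recalled just before the lemma gives
$$
\int_0^t k^s(g_\tau\gamma(s))\,d\tau \;\geq\; \frac{1}{2T}\int_0^t \lt(g_\tau\gamma(s))\,d\tau - 2Tk_{\max} \;\geq\; \frac{\eta t}{2T}-2Tk_{\max}.
$$
Setting $C:=\exp(2Tk_{\max})$, one concludes $J_s(t)\leq C\exp(-\eta t/(2T))$ uniformly in $s\in[0,L]$, and integrating over $[0,L]$ produces the claimed estimate on $d^s$.

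The unstable statement follows by a symmetric argument: run the flow in backward time along $W^u$, use the unstable form of \eqref{eq: J and k} to get the corresponding explicit formula for the unstable Jacobi field under $g_{-t}$, and then invoke $k^u\geq \lambda$ together with the same comparison inequality. The main delicate point relative to the non-positively curved setting of \cite{burns2018unique} is justifying the leaf regularity required for the arc-length parametrization and for the reduction to Jacobi norms; this is ensured by the $C^2$ regularity of horospheres in the no-focal-points case (cited below \eqref{eq: jacobi surface}) and by the continuity of the subbundles $E^s,E^u$ established in Proposition~\ref{prop: properties}. Once these regularity facts are in hand, the remainder of the argument is a direct ODE computation, which is why the author has chosen to omit the proof.
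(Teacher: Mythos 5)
Your proposal is correct and follows exactly the route the paper indicates (the paper omits the proof but points to \cite[Lemma 3.1]{burns2018unique} and \cite[Lemma 4.5]{chen2020unique}, supplies the comparison inequality between $\lambda$ and $\lambda_T$ immediately before the lemma, and even names the constant $C=\exp(2Tk_{\max})$): reduce $d^s$ to the projected lengths of stable Jacobi fields, integrate the scalar ODE $(J^s)'=-k^s J^s$ along the flow, bound $\int_0^t k^s\geq\int_0^t\lambda\geq\frac{1}{2T}\int_0^t\lambda_T-2Tk_{\max}\geq\frac{\eta t}{2T}-2Tk_{\max}$, and integrate over the segment. The unstable case is symmetric, as you note.
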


Using this lemma, the following proposition characterizes the uniformly hyperbolicity among compact subsets of $T^1S$.

\begin{proposition}\label{prop: compact hyperbolic}
Any compact $G$-invariant subset $\Lambda \subseteq T^1S$ is uniformly hyperbolic if and only if $\Lambda \subseteq \Reg$.
\end{proposition}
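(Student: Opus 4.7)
The plan is to treat the two implications separately. For the ``only if'' direction, I would use the surface dimension hypothesis together with Proposition \ref{prop: properties}(4). Since $\dim T^1S = 3$, a uniformly hyperbolic splitting $E^s \oplus E^c \oplus E^u$ along $\Lambda$ must consist of one-dimensional, pairwise linearly independent bundles. At any $v \in \Lambda \cap \Sing$, however, Proposition \ref{prop: properties}(4) gives $E^s(v) \cap E^u(v) \neq \{0\}$, which by dimension forces $E^s(v) = E^u(v)$ and contradicts the directness of the sum. Hence $\Lambda \subseteq \Reg$.

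For the converse, my overall strategy is a compactness reduction from the exhaustion $\Reg = \bigcup_{T,\eta > 0} \Reg_T(\eta)$ to a single pair $(T, \eta)$. First I would observe that $\lambda_T$ is continuous (as an integral of the continuous function $\lambda$), that $T \mapsto \lambda_T$ is non-decreasing (since $\lambda \geq 0$), and that Proposition \ref{prop: lambda properties}(3) guarantees $\lambda_T(v) > 0$ for every $v \in \Reg$ once $T$ is taken large enough. Thus the open sets $\{v : \lambda_T(v) > \eta\}$ cover $\Reg$, and compactness of $\Lambda$ combined with the monotonicity in $T$ delivers a single pair $(T, \eta)$ for which $\Lambda \subseteq \Reg_T(\eta)$.

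With this containment in hand, I would use $G$-invariance of $\Lambda$ to ensure that every orbit in $\Lambda$ stays in $\Reg_T(\eta)$ for all time. The comparison inequality stated just above Lemma \ref{lem: dist formula} then yields, for every $v \in \Lambda$ and $t \geq 0$,
\[
\int_0^t \lambda(g_s v)\,ds \;\geq\; \frac{\eta t}{2T} - 2T k_{\max}.
\]
Combined with $\lambda \leq k^u$ and the identity \eqref{eq: J and k}, this produces a uniform lower bound $J^u(t) \geq e^{-2T k_{\max}} e^{\eta t /(2T)} J^u(0)$ on unstable Jacobi fields along any orbit in $\Lambda$. The symmetric computation using $(J^s)' = -k^s J^s$ and $\lambda \leq k^s$ gives the matching contraction $J^s(t) \leq e^{2T k_{\max}} e^{-\eta t/(2T)} J^s(0)$ on the stable side. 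Translated back via the identification $T_v T^1 S \simeq \J(\gamma_v)$, these are precisely uniform expansion of $E^u$ and uniform contraction of $E^s$, i.e., uniform hyperbolicity of the splitting along $\Lambda$.

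The main point to watch, as emphasized in Remark \ref{rem: difference in setting}, is that no single $\Reg(\eta)$ exhausts $\Reg$ in our no-focal-points setting, so $\lambda$ on its own cannot yield uniform hyperbolicity bounds on $\Lambda$. The whole argument therefore hinges on replacing $\lambda$ with the averaged quantity $\lambda_T$: its monotonicity in $T$ is what lets finitely many local scales be merged into a single pair $(T, \eta)$ controlling all of $\Lambda$, after which the Jacobi-field bounds above become uniform in $v \in \Lambda$.
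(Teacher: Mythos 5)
Your converse direction matches the paper's argument: cover $\Lambda$ by the open sets $\{\lambda_T > \eta\}$, use compactness and monotonicity of $\lambda_T$ in $T$ to extract a single pair $(T,\eta)$ with $\Lambda \subseteq \Reg_T(\eta)$, and then derive exponential growth and decay of Jacobi fields from the comparison inequality. The paper cites Lemma \ref{lem: dist formula} at this point, whereas you unwind that lemma's content directly via \eqref{eq: J and k}; both give the rate $\eta/(2T)$, so this half is correct and essentially the paper's proof written out in slightly more detail.

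Your ``only if'' direction, however, has a gap. You assert that a uniformly hyperbolic splitting of $T_\Lambda T^1S$ must be the Jacobi splitting $E^s \oplus E^c \oplus E^u$ and then invoke Proposition \ref{prop: properties}(4) at a singular point. But uniform hyperbolicity a priori only provides \emph{some} $dg_t$-invariant splitting $E^- \oplus E^c \oplus E^+$ with uniform contraction and expansion; identifying $E^\pm$ with the Jacobi bundles $E^{s/u}$ is itself a step that needs an argument (e.g.\ $E^- \subseteq E^s$ because exponentially decaying vectors have bounded forward norm, similarly $E^+ \subseteq E^u$, then equality by dimension count — and at a singular point this already fails). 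The paper avoids the identification entirely: for $v \in \Lambda \cap \Sing$ and $0 \neq \xi \in E^s(v) = E^u(v)$, the Jacobi field $J_\xi$ lies in $\J^s(\gamma_v) \cap \J^u(\gamma_v)$, so by Proposition \ref{prop: properties}(5) its norm is simultaneously non-increasing and non-decreasing, hence constant; since $\xi$ is orthogonal to the flow direction, a vector with $\|dg_t\xi\|$ constant in $t$ is incompatible with \emph{any} uniformly hyperbolic splitting, not just the Jacobi one. You should either supply the bundle identification explicitly or switch to this constant-norm argument.
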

\begin{proof}
One direction is clear. If $\Lambda \cap \Sing \neq \emptyset$, then for any $v \in \Lambda \cap \Sing$, the geodesic $\gamma_v$ admits a Jacobi field $J_\xi$ with $\xi$ belonging to $ E^s(v) = E^u(v)$. From Proposition \ref{prop: properties}, such a Jacobi field $J_\xi$ has constant length for all $t\in \R$. This implies that $\Lambda$ is not uniformly hyperbolic.

Conversely, suppose that $\Lambda$ is a compact subset of $\Reg$. Then, there exist $T,\eta>0$ such that $\Lambda$ is contained in $\Reg_T(\eta)$. Then it follows from Lemma \ref{lem: dist formula} that $\Lambda$ is uniformly hyperbolic with rate $\eta/2T$.
\end{proof}

\begin{remark}\label{rem: top 1}
In addition to being uniformly hyperbolic, any compact $G$-invariant subset $\Lambda \subsetneq \Reg$ has topological dimension 1. This follows from \cite[Proposition 7]{burns2014lyapunov}
\end{remark}

\subsection{Concluding the proof of Proposition \ref{prop: Lambda sequence}}

The construction of the sequence of increasingly nested basic sets $\{\wt{\Lambda}_n\}_{n\in \N}$ from Proposition \ref{prop: Lambda sequence} begins with the following key proposition. 
This proposition corresponds to \cite[Proposition 8]{burns2014lyapunov}, and we prove it in the next section. While we follow their general strategy, due to limitations of $\lambda$ in characterizing the singular set (see Remark \ref{rem: difference in setting}), our construction relies on the function $\lambda_T$ more suited to our setting.  
\begin{proposition} \label{prop: key}
For any closed, $G$-invariant, hyperbolic set $\Lambda\subset T^1S$ and any neighborhood $U$ of $\Lambda$, there exists a closed, $G$-invariant, locally maximal, hyperbolic set $\widetilde{\Lambda}$ such that $\Lambda\subseteq \widetilde{\Lambda} \subseteq U$. 
\end{proposition}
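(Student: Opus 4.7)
The plan is to follow Bowen's classical construction of embedding a hyperbolic set into a locally maximal one, adapted to the no-focal-points setting by using $\lt$-based estimates (see Remark \ref{rem: difference in setting}) rather than the $\lambda$-based ones available under non-positive curvature.

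First, Proposition \ref{prop: compact hyperbolic} gives $\Lambda \subseteq \Reg$. Since $\Reg = \bigcup_{T,\eta>0} \Reg_T(\eta)$, compactness of $\Lambda$ together with continuity of $\lt$ produce $T,\eta>0$ and an open neighborhood $V$ of $\Lambda$ with $\overline{V} \subseteq U \cap \Reg_T(\eta/2)$. On the compact set $\overline{V} \subseteq \Reg$, Lemma \ref{lem: dist formula} furnishes uniform stable/unstable contraction at rate at least $\eta/(4T)$, and continuity of the foliations $W^\sigma$ supplies a uniform local product structure (Definition \ref{defn: LPS}) at some scale $\delta_0>0$ with constant $\kappa$.

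Next, I would define the candidate as the maximal $G$-invariant subset of $\overline{V}$,
$$\widetilde{\Lambda} := \{v \in T^1S : g_t v \in \overline{V}\text{ for all }t \in \R\},$$
which is manifestly closed, $G$-invariant, contains $\Lambda$, and lies in $\overline{V} \subseteq U$. Hyperbolicity then comes essentially for free: any $v \in \widetilde{\Lambda}$ satisfies $\lt(g_tv) \geq \eta/2 > 0$ for every $t \in \R$, so $\lambda$ does not vanish identically on the orbit of $v$; by the contrapositive of Proposition \ref{prop: lambda properties}(3) this forces $v \in \Reg$, making $\widetilde{\Lambda}$ a compact $G$-invariant subset of $\Reg$, and Proposition \ref{prop: compact hyperbolic} then delivers uniform hyperbolicity.

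The remaining task, and the main obstacle, is verifying local maximality: I must exhibit an open neighborhood $W$ of $\widetilde{\Lambda}$ with $\widetilde{\Lambda} = \bigcap_{t \in \R} g_{-t} W$. Following the strategy of \cite[Proposition 8]{burns2014lyapunov}, I would nest open neighborhoods $V_1 \subset \overline{V_1} \subset V_2$ with $\overline{V_2} \subseteq U \cap \Reg_T(\eta/2)$, carry out the construction above with $V_2$ in place of $V$, and then argue that the resulting $\widetilde{\Lambda}$ is in fact contained in $V_1$, so that $V_1$ serves as the open witness. Establishing this strict containment requires combining the uniform expansivity coming from hyperbolicity on $\overline{V_2}$, the local product structure at scale $\delta_0$, and the shadowing Lemma \ref{lem: shadowing} to show that any orbit remaining in $V_2$ must be tracked by (and therefore equal to) one in $\widetilde{\Lambda}$. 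A subtle point specific to our setting is that Lemma \ref{lem: shadowing} demands orbit endpoints in $\Reg(\eta')$ for some $\eta'>0$, not merely in $\Reg_T(\eta/2)$; however, every $v \in \Reg_T(\eta/2)$ admits some $s \in [-T,T]$ with $\lambda(g_sv) \geq \eta/(4T)$ by the mean value theorem, so shadowing anchors can always be located within bounded time along any orbit in $\widetilde{\Lambda}$. Writing out this bridge between $\Reg_T$-control (natural for no focal points) and $\Reg$-control (required by Lemma \ref{lem: shadowing}) is the principal technical modification of the Burns--Gelfert argument needed in our setting.
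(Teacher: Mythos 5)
Your proposal takes a genuinely different route from the paper, and the route has a real gap at its most critical step.

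The paper constructs $\widetilde{\Lambda}$ as the image of a subshift of finite type under a continuous injective coding map $\psi$. This symbolic construction is not merely one of several options: it exploits the fact (Remark \ref{rem: top 1}) that hyperbolic sets in $\Reg$ for surfaces have topological dimension $1$, so that $\Lambda \cap \C$ is zero-dimensional in the chosen cross-section $\C$, which is what makes the Markov partition $\mathcal{R}$ and the alphabet $\mathcal{R}_N$ work. Local maximality is then inherited for free from the local product structure of the shift space $\mathcal{A}_{2N_0}$. Your proposal instead defines $\widetilde{\Lambda} := \bigcap_{t \in \R} g_{-t}(\overline{V})$ as the maximal invariant set of a compact neighborhood; your verification that this set is closed, invariant, and (via $\Reg_T(\eta)$, the contrapositive of Proposition \ref{prop: lambda properties}(3), and Proposition \ref{prop: compact hyperbolic}) uniformly hyperbolic is fine and matches the paper's opening reductions. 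But the plan for local maximality --- show the maximal invariant set of $\overline{V_2}$ actually lies inside the smaller open $V_1$ --- is the step that fails. This containment is false for general hyperbolic sets: the maximal invariant set of a neighborhood routinely contains orbits that approach the boundary, and shrinking the outer neighborhood changes the maximal invariant set rather than trapping it. Indeed, there exist hyperbolic sets which are contained in \emph{no} locally maximal hyperbolic set (Fisher, ``Hyperbolic sets that are not locally maximal''), so any argument that treats $\Lambda$ as a generic hyperbolic set and only invokes expansivity, local product structure, and shadowing cannot succeed. The dimension-one hypothesis, which your proposal never uses, is exactly what rules out such pathologies here, and the mechanism by which the paper exploits it is the cross-section plus su-rectangle Markov partition plus subshift encoding. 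Your observation about bridging $\Reg_T(\eta)$-control to $\Reg(\eta')$-anchors via the mean value theorem is sound and in the spirit of the paper's use of $\lambda_T$, but it addresses a secondary technicality, not the missing core.
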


Using Proposition \ref{prop: key}, the rest of the proof can be completed as in the proof of \cite[Theorem 1.4]{burns2014lyapunov}, which we briefly sketch here.	
The following lemma from \cite[Proposition 9]{burns2014lyapunov} allows to glue two basic sets into another basic set; it uses the shadowing lemma and its proof readily extends to our setting using our analogous version of the shadowing lemma (Lemma \ref{lem: shadowing}). 

\begin{lemma} \cite[Proposition 9]{burns2014lyapunov}   \label{lem: bridging}
Given any two basic sets $\Lambda_1,\Lambda_2 \subseteq T^1S$, there is a third basic set $\Lambda$ which contains them both.
\end{lemma}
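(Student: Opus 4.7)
The plan is to adapt the argument of \cite[Proposition 9]{burns2014lyapunov}, combining the shadowing lemma (Lemma \ref{lem: shadowing}) with Proposition \ref{prop: key}. I would first build a compact, $G$-invariant, transitive hyperbolic set $\Lambda' \supseteq \Lambda_1 \cup \Lambda_2$ by shadowing, and then enlarge $\Lambda'$ to a locally maximal basic set $\Lambda$.

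For the shadowing stage, I would select periodic orbits $\gamma_i \subset \Lambda_i$ and base points $p_i \in \gamma_i$ with $\lambda(p_i) \geq \eta$ for a common $\eta > 0$. This is possible because basic sets contain dense periodic orbits and, by Proposition \ref{prop: lambda properties}(3), no periodic orbit contained in $\Reg$ can satisfy $\lambda \equiv 0$ along its orbit. Using the topological transitivity of the geodesic flow on $T^1S$ (Proposition \ref{prop: properties}(6)), I would then pick connecting orbit segments $(u_{12}, t_{12})$ running from near $p_1$ to near $p_2$ and $(u_{21}, t_{21})$ going back, arranged so that their endpoints also lie in $\Reg(\eta/2)$.

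The heart of the argument is to consider all bi-infinite pseudo-orbits obtained by concatenating arbitrary numbers of loops around $\gamma_1$, the transition $u_{12}$, arbitrary numbers of loops around $\gamma_2$, the transition $u_{21}$, and so on. Since every joining point lies in $\Reg(\eta/2)$, Lemma \ref{lem: shadowing} produces a geodesic shadowing each such pseudo-orbit. Letting $\Lambda'$ denote the closure of all such shadowing orbits together with $\Lambda_1 \cup \Lambda_2$ and the connecting segments, one obtains a compact $G$-invariant set confined to a small neighborhood of the compact subset $\Lambda_1 \cup \Lambda_2 \cup u_{12}([0,t_{12}]) \cup u_{21}([0,t_{21}]) \subset \Reg$. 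Hence $\Lambda' \subset \Reg$, so Proposition \ref{prop: compact hyperbolic} yields uniform hyperbolicity. Transitivity of $\Lambda'$ then follows by a standard density argument on the symbolic input: letting the combinatorics of loops range through a countable dense collection of itineraries produces an orbit dense in $\Lambda'$.

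The step I expect to be the main obstacle is the final one. Applying Proposition \ref{prop: key} to $\Lambda'$ yields a closed, locally maximal hyperbolic set $\Lambda \supseteq \Lambda'$, but transitivity of the flow on $\Lambda$ is not automatic. I would handle this by shrinking the neighborhood in Proposition \ref{prop: key} enough that the local product structure (Definition \ref{defn: LPS}) represents every point of $\Lambda \setminus \Lambda'$ as a bracket $[w_1, w_2]$ of nearby points $w_1, w_2 \in \Lambda'$; a dense orbit in $\Lambda'$ then accumulates on each such bracket point through its stable and unstable manifolds, yielding a dense orbit in $\Lambda$. The shadowing construction itself is essentially routine once the $\Reg(\eta)$ anchor points are secured, so I anticipate that verifying this transitivity-inheritance step is the most delicate technical piece of the proof.
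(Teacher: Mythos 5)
The step you flag as the ``main obstacle'' is indeed problematic, but your argument has a more basic gap one stage earlier: the set $\Lambda'$ you build is not transitive. The symbolic density argument produces an orbit dense in the \emph{set of shadowing orbits of $\gamma_1$--$\gamma_2$ itineraries}, but as the number of consecutive $\gamma_1$-loops in a block tends to infinity, the shadowing orbits accumulate only on the periodic orbit $\gamma_1$, not on all of $\Lambda_1$. So the closure of the shadowing orbits is a horseshoe containing $\gamma_1 \cup \gamma_2$ and the heteroclinic transitions, but it misses $\Lambda_1 \setminus \gamma_1$ and $\Lambda_2 \setminus \gamma_2$ entirely. Once you explicitly adjoin $\Lambda_1 \cup \Lambda_2$ to restore the inclusion $\Lambda' \supseteq \Lambda_1 \cup \Lambda_2$, the dense orbit you constructed no longer visits a neighborhood of a typical point of $\Lambda_1$, and transitivity of $\Lambda'$ is lost. (A minor further point: the finite connecting segments $(u_{12}, t_{12})$, $(u_{21}, t_{21})$ are not $G$-invariant, so they cannot be added to $\Lambda'$ directly.)

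The tool you are missing is Smale's spectral decomposition theorem, which the paper itself invokes in the proof of Proposition \ref{prop: Lambda sequence} and which makes the transitivity issue evaporate. The intended line of argument is: first, use the shadowing lemma (with anchor points in $\Reg(\eta)$, exactly as you set up) to produce periodic orbits that cycle between $\gamma_1 \subset \Lambda_1$ and $\gamma_2 \subset \Lambda_2$; these together with $\Lambda_1 \cup \Lambda_2$ form a compact $G$-invariant hyperbolic set inside a small neighborhood $U \subset \Reg$. Then apply Proposition \ref{prop: key} to obtain a compact, locally maximal hyperbolic set $\wt\Lambda$ inside $U$ containing all of this. Apply spectral decomposition to the non-wandering set $\Omega(\wt\Lambda)$: since $\Lambda_1$ and $\Lambda_2$ are transitive, each is contained in a single piece of the decomposition, and the cycling periodic orbits (which are recurrent, hence lie in $\Omega(\wt\Lambda)$) come arbitrarily close to both $\gamma_1$ and $\gamma_2$, so they chain-connect the piece containing $\Lambda_1$ to the piece containing $\Lambda_2$. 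By the defining property of the spectral decomposition these must be the same piece, and that single basic set is the desired $\Lambda$. This route sidesteps both the transitivity of your intermediate set $\Lambda'$ and the delicate ``transitivity-inheritance'' argument you were anticipating, replacing them with a clean application of a standard structural theorem.
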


For each $n\in \N$, we denote by $\wh{\Lambda}_n$ the closure of the set of all closed geodesics contained in the complement of the $\frac{1}{n}$-neighborhood $N_{\frac{1}{n}}(\Sing)$ of the singular set. It is a closed $G$-invariant hyperbolic set (Proposition \ref{prop: compact hyperbolic}). Then Proposition \ref{prop: key} creates a closed $G$-invariant hyperbolic set containing $\wh{\Lambda}_n$ with the added structure of the local maximality. The non-wandering set of such a set, which necessarily contains $\wh{\Lambda}_n$, can be decomposed into basic sets by Smale's spectral decomposition theorem, and repeatedly applying Lemma \ref{lem: bridging} produces a basic set $\wt{\Lambda}_n$ containing all such basic sets. In particular, $\wt{\Lambda}_n$ contains $\wh{\Lambda}_n$. By applying Lemma \ref{lem: bridging} again to $\wt{\Lambda}_{n-1}$, if necessary, we may assume that $\wt{\Lambda}_n$ contains $\wt{\Lambda}_{n-1}$ also.

It can be easily checked that such a nested sequence $\{\wt{\Lambda}_n\}_{n\in \N}$ is the required sequence basic sets. In fact, for any basic set $\Lambda$, the closed geodesics in the regular set are dense in $\Lambda$, and hence, there exists some $n\in\N$ such that $\Lambda \subset \wh{\Lambda}_n \subset \wt{\Lambda}_n$. This completes the proof of Proposition \ref{prop: Lambda sequence}.

\section{Proof of Proposition \ref{prop: key}}
\label{sec: 5}

\subsection{Choice of constants and preliminary construction}\label{sec: t}
 Let $\Lambda$ and $U$ be given as in Proposition \ref{prop: key}. We may assume that $U$ is sufficiently close to $\Lambda$ so that every $G$-invariant compact set contained in $U$ is hyperbolic and that $\overline{U}$ does not intersect the singular set. By Proposition \ref{prop: compact hyperbolic}, there exist $\eta,T>0$ such that $\overline{U}\subseteq \text{Reg}_{T}(\eta)$.
 
Since a compact hyperbolic set is locally maximal if and only if it has a local product structure, we will construct $\wt{\Lambda}$ as the image of a subshift of finite type under a continuous injective map. In this way, $\wt{\Lambda}$ inherits the natural local product structure from the shift space and will be locally maximal. 

Instead of building $\wt{\Lambda}$ directly, we will construct its intersection with a suitably chosen cross section $\C$ to the flow. This is because $\Lambda$ has topological dimension 1 from Remark \ref{rem: top 1}, so its intersection with a cross section has topological dimension 0 (i.e., discrete points). This helps with the construction of the Markov partition around $\Lambda \cap \C$, which will later be used to define the alphabets in the required subshift.

We first begin with the construction of the cross section $\C$. Denoting by $V \subset TT^1S$ the codimension-1 orthogonal complement to the vector field generating the geodesic flow, for any $v\in T^1S$ and $\d>0$ we define $D_{\delta}(v)$ as the image of the exponential map of the $\d$-ball in $V(v)$ centered at the origin.
For any $\delta$ sufficiently small and $v\in T^1S$, the center-stable foliation $W^{cs}$ induces an one-dimensional stable foliation $\W^s$ on $D_{\delta}(v)$ whose leaves are defined by 
$$\W^s_{v,\d}(w):= W^{cs}(w) \cap D_\d(v).$$
Likewise, $W^{cu}$ induces an one-dimensional foliation $\W^u_{v,\d}$ on $D_\d(v)$. When the underlying $D_\delta(v)$ is clear, we will often suppress $v$ and simply write $\W^{s/u}(w)$. Restricted to a compact subset of $\Reg$, the foliations $\W^{s/u}$ are uniformly transverse.

We will now choose the scale $\d>0$ to work with.
First, since $\Reg_{T}(\eta)$ is a compact subset of $\Reg$, there exist $\d_{LPS}=\delta(T,\eta)>0$ and $\kappa:=\kappa(T,\eta)>1$ such that vectors in a $\delta_{LPS}$-neighborhood of any $v\in \Reg_{T}(\eta)$ have the local product structure with constant $\kappa$; see Definition \ref{defn: LPS}. 

Since $E^s$ and $E^u$ are perpendicular to $E^c$, for sufficiently small $\d>0$, both $D_\d(v)$ and the foliations $W^{s/u}$ and $\W^{s/u}$ are nearly perpendicular to the geodesic flow lines. The slight failure of the orthogonality can be accounted for as follows. Once and for all, we fix $\b$ larger than but sufficiently close to $1$, and then we choose $\delta\in (0,\d_{LPS}/2)$ such that for any $v\in T^1S$,  $w\in D_{\delta}(v)$, and $u\in \W^{u}_v(w)$, we have 
\begin{equation}\label{eq: d and d^s}
\k^{-1} d^{u}(g_tu,w)\leq d(u,w)\leq \beta d^{u}(g_tu,w)
\end{equation}
where $t\in \R$ is a unique real number such that $g_tu$ belongs to the local unstable leaf $W^u(w)$ of $w$. The metric $d$ here denotes the metric from \eqref{eq: Knieper}, and the multiplicative factor $\k^{-1}$ shows up in the lower bound because $g_tu$ is equal to $[w,u]$, and the bounds from Definition \ref{defn: LPS} apply here.
Likewise, we may assume that analogous properties hold for any $u\in \W^{s}_v(w)$ with respect to $d^s$. Such $\delta>0$ fixed here will be the scale we will work with throughout the proof.

%We fix some $\b>1$ arbitrarily close to 1, and choose $\delta$ such that 
%\begin{equation}\label{eq: delta}
%0<\delta<\min\{\delta_\b,\delta_{LPS}/2\}.
%\end{equation} 

For $v\in \Lambda$, we say a subset of $D_\d(v)$ is a \emph{su-rectangle} if it is a rectangle where each edge is a subset of $W^{cs/cu}(w) \cap D_\d(v)$ for some $w \in \Sing$.  
Since both foliations $W^s$ and $W^u$ are minimal under the action of geodesic flow \cite[Lemma 4.6]{chen2021properties} and $\text{Sing}\neq \emptyset$, for each $v\in \Lambda$ we can build a su-rectangle around $v$ of arbitrarily small diameter contained in $D_{\delta}(v)$. 
In particular, we can choose a finite subset $\{v_i\}_{i=1}^n\subset \Lambda$ and build a sufficiently small su-rectangle $C_i:=C_{v_i}\subset D_\d(v_i)$ around $v_i$ such that the union $\displaystyle \bigcup_{1\leq i \leq n}g_{[-\frac{1}{2},\frac{1}{2}]}C_{v_i}$ contains $\Lambda$ and is contained in $U$. Moreover, we may construct $C_i$'s such that they are pairwise disjoint.
Our desired cross section is $\displaystyle \C:=\bigcup_{1\leq i \leq n}C_i$.

Since each $v_i$ belongs to $ \Lambda$, the induced foliations $\W^{s/u}$ are uniformly transverse on $\C$, and each $C_i$ has a natural product structure: for any $v_1,v_2\in C_i$, both
$$[v_1,v_2]_\C:=\W^u_{v,\delta}(v_1)\cap \W^s_{v,\delta}(v_2)$$ and $[v_2,v_1]_\C=\W^u_{v,\delta}(v_2)\cap \W^s_{v,\delta}(v_1)$ are contained in $C_i$.
For such $v_1,v_2$, there exists a unique real number $t\in \R$ such that $g_t[v_1,v_2]_\C$ coincides with $[v_1,v_2]$; see Figure \ref{fig: 1}. Note that it plays the same role as the $t$ appearing in \eqref{eq: d and d^s}. From the choice of $\b$ and $\k$, we have 
\begin{equation}\label{eq: d relation}
d(v_1,[v_1,v_2]_\C) \leq \b d^u(v_1,[v_1,v_2]) \leq \b \k d(v_1,v_2).
\end{equation}

Denote by $\partial C_i$ the boundary of $C_i$ for $1\leq i \leq n$ and $\partial \C$ the union of $\partial C_i$.
Note that for any $v_0\in \text{Sing}$ and $w_0\in W^{cs}(v_0)$, their forward Lyapunov exponents satisfy $\chi^+(w_0)=\chi^+(v_0)=0$. This follows from \cite[Proposition 2]{burns2014lyapunov} which makes use of the flat strip theorem; since the flat strip theorem remains valid for manifolds with no focal points  \cite[Theorem 2]{eschenburg1977horospheres}, the proof readily extends to our setting. Likewise, if $w_0\in W^{cu}(v_0)$, then $\chi^-(w_0)=\chi^-(v_0)=0$.
Since $\Lambda$ is uniformly hyperbolic, each $v\in \Lambda$ has $\chi(v)>0$. Hence, $\Lambda$ does not intersect the boundary of any su-rectangle. In particular, we have $\Lambda \cap \partial \C = \emptyset$.

\begin{figure}[h]
  \centering
  \begin{minipage}[b]{0.39\textwidth}\label{fig: 1}
    \includegraphics[width=\textwidth]{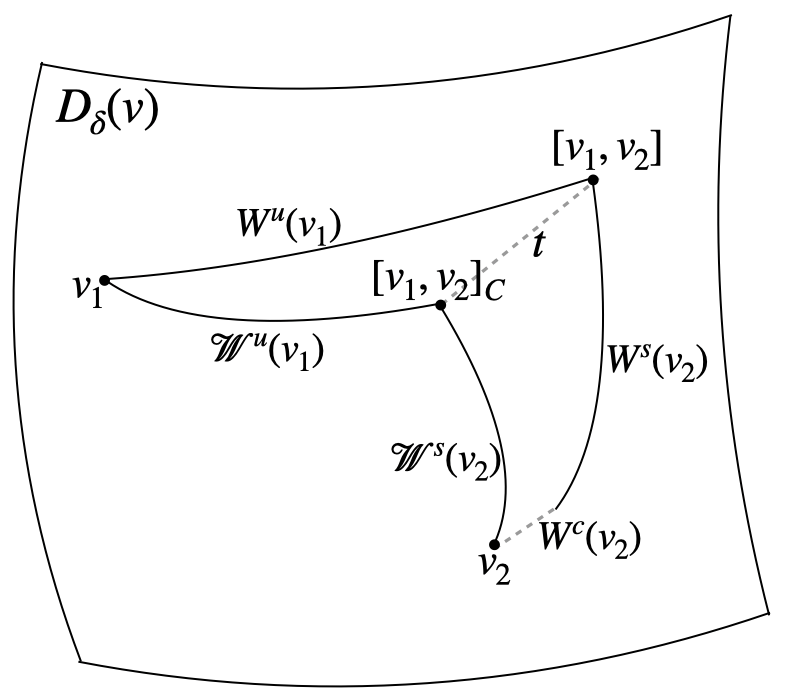}
    \caption{}
  \end{minipage}
  \begin{minipage}[b]{0.6\textwidth}\label{fig: 2}
    \includegraphics[width=\textwidth]{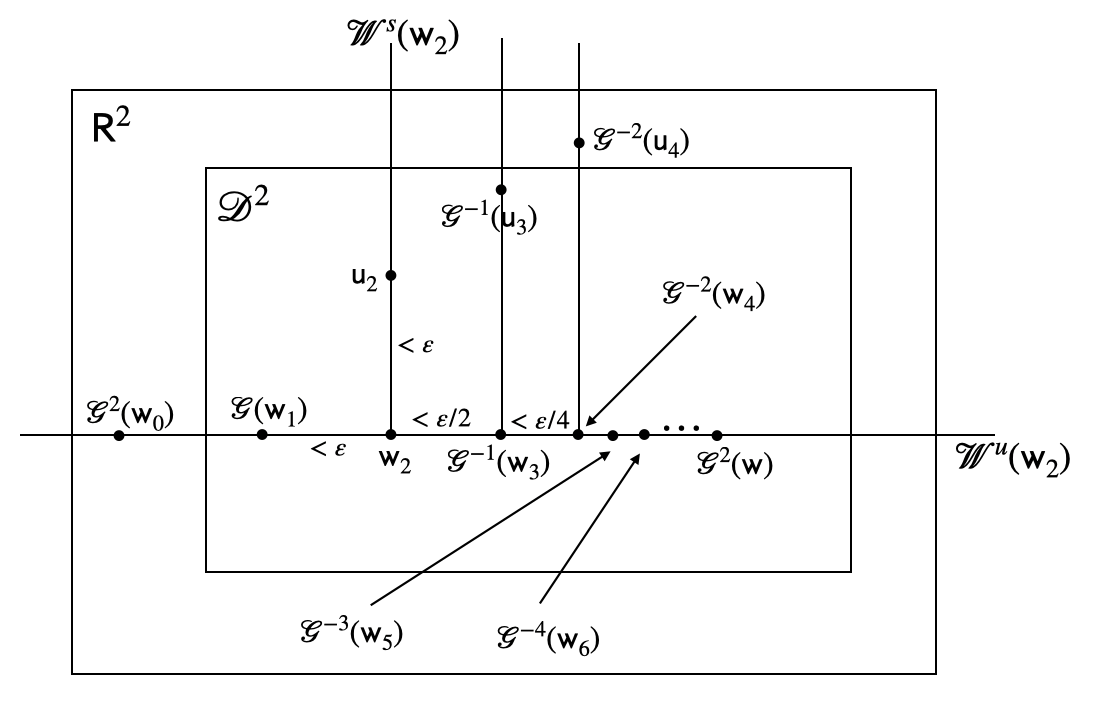}
    \caption{}
  \end{minipage}
\end{figure}
We fix a sufficiently small $0 \leq \a \ll \d$ and construct finitely many su-rectangles 
$$\mathcal{R}=\mathcal{R}(\alpha):=\{R_1,R_2,\cdots,R_m\}$$
 on $\C$ such that their union contains $\Lambda \cap \C$ and that each $R_i$ has a diameter at most $\a$ and contains at least one vector in $\Lambda$. We may assume that they have pairwise disjoint interiors and that the distance between $\Lambda \cap \C$ and $\bigcup\limits_{j=1}^m \partial R_j$ is positive using the same reasoning from the previous paragraph.

 By choosing $\a$ sufficiently small, we can ensure that the every vector in the union $\bigcup\limits_{j=1}^m g_{[0,1]}R_j$ is sufficiently close to $\Lambda$, and hence, is contained in $g_{[-\frac{1}{2},\frac{1}{2}]}\C$. In particular, the first return time $\tau \colon \bigcup\limits_{j=1}^m R_j \to [0,1]$ to $\C$ is well-defined, and we define accordingly the first return map by 
 $$\mathcal{F}(v):=g_{\tau(v)}v.$$ 
Since $g_{[-\frac{1}{2},\frac{1}{2}]}\C $ is contained in $U$ which is then contained in $\Reg_T(\eta)$, the uniform hyperbolicity established in Lemma \ref{lem: dist formula} applies to $\F$; see Lemma \ref{lem: hyperbolicity}. 

\begin{remark}\label{rem: F}
While $\F(v)$ necessarily belongs to $\C$, it may not belong to $\bigcup\limits_{j=1}^m R_j$. In particular, $\F$ may not be infinitely iterated on $\bigcup\limits_{j=1}^m R_j$. However, we will only need to consider and focus on the set of vectors on which $\F$ can be infinitely iterated.
\end{remark}

Since $C_i$'s are pairwise disjoint, there exists $c_1>0$ such that $\tau$ is bounded from below by $c_1$. 
Using $d(\Lambda,\partial \C)>0$, we may assume that $\mathcal{F}$ and $\tau$ are smooth on $R_j$ for every $1\leq j \leq m$.
To sum up, we have
\begin{enumerate}
    \item $\mathcal{R}=\{R_j\}_{j=1}^m$ is a collection of closed su-rectangles in $\C$ with mutually disjoint interiors. 
    \item The union $\displaystyle \bigcup_{j=1}^m R_j$ contains $\displaystyle \Lambda\cap \C$ and is contained in the interior of $\C$.
    \item Each $R_j$ has diameter at most $\alpha$ and contains at least one vector in $\Lambda\cap \C$.
    \item Both $\mathcal{F}$ and $\tau$ are smooth on $R_j$, and $\mathcal{F}(R_j)$ is contained in a single $C_i$ for every $1\leq j \leq m$.
\end{enumerate}

The following lemma shows that $\F$ preserves the local product structure on $\mathcal{R}$.

\begin{lemma}  \label{lemmalpsforR}
For any $v,w\in R_{i}$ such that $\mathcal{F}(v),\mathcal{F}(w)\in R_{j}$ for some $1 \leq i,j\leq m$, then
$$
    \mathcal{F}([v,w]_\C)=[\mathcal{F}(v),\mathcal{F}(w)]_\C.
$$

Similarly, if $v,w\in R_{i}$ and $\mathcal{F}^{-1}(v),\mathcal{F}^{-1}(w)\in R_{j}$, then
$$    \mathcal{F}^{-1}([v,w]_\C)=[\mathcal{F}^{-1}(v),\mathcal{F}^{-1}(w)]_\C.
$$
\end{lemma}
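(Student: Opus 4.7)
The plan is to argue that the bracket $[v,w]_\C$ is preserved by $\mathcal{F}$ by tracking the two defining leaves $\mathcal{W}^u(v)$ and $\mathcal{W}^s(w)$ separately under the geodesic flow. First I would set $p := [v,w]_\C$, $v' := \mathcal{F}(v)$, and $w' := \mathcal{F}(w)$, noting that by definition $p \in \mathcal{W}^u(v) \cap \mathcal{W}^s(w) \subseteq W^{cu}(v) \cap W^{cs}(w)$. Since $R_i$ is a su-rectangle and is therefore closed under the bracket operation, we have $p \in R_i$, and property (4) of the preliminary construction guarantees $\mathcal{F}(R_i) \subseteq C_k = C_{v_k}$ for a single index $k$; moreover $v', w' \in R_j \subseteq C_k$.

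The key step is an immediate application of the $G$-invariance of $W^{cu}$ and $W^{cs}$ from Proposition \ref{prop: properties}. Since $p \in W^{cu}(v)$, one has $\mathcal{F}(p) = g_{\tau(p)}p \in W^{cu}(g_{\tau(p)}v)$, and because $g_{\tau(p)}v$ and $v' = g_{\tau(v)}v$ lie on the same flow orbit, they share a common center-unstable leaf, so $\mathcal{F}(p) \in W^{cu}(v')$. Intersecting with $D_\delta(v_k)$, which contains $\mathcal{F}(p)$, places $\mathcal{F}(p)$ on $\mathcal{W}^u(v')$. Interchanging the roles of stable and unstable gives the parallel conclusion $\mathcal{F}(p) \in \mathcal{W}^s(w')$. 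Combining,
\[
\mathcal{F}(p) \in \mathcal{W}^u(v') \cap \mathcal{W}^s(w') = [v',w']_\C,
\]
where the rightmost equality uses that the intersection is a unique point thanks to the local product structure on $C_k$. This is exactly the first claim, and the second statement for $\mathcal{F}^{-1}$ follows by the same reasoning with the roles of $W^{cu}$ and $W^{cs}$ interchanged.

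The main potential obstacle is confirming that $\mathcal{F}(p)$ actually lands in the disk $D_\delta(v_k)$ on which the foliations $\mathcal{W}^{s/u}$ are defined; only then may one identify $\mathcal{W}^u(v') \cap \mathcal{W}^s(w')$ with the single point $[v',w']_\C$. This is precisely what property (4) of the preliminary construction ensures: the whole image $\mathcal{F}(R_i)$ sits inside a single $C_k$, so no ambiguity arises from which transversal is being used. Once this is in place, the proof is driven entirely by the $G$-invariance of the center-(un)stable foliations.
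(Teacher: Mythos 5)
Your proof is correct and follows essentially the same approach as the paper: both rely on the $G$-invariance of the center-stable and center-unstable foliations to show that $\mathcal{F}$ transports the cross-section bracket, together with the fact that $\mathcal{F}(R_i)$ lies in a single $C_k$ so the product point is uniquely determined there. The paper tracks explicit time shifts $t_0,t_1$ relating $[\cdot,\cdot]_\C$ to the flow-transverse bracket $[\cdot,\cdot]$ and flows by $\tau(v)$ rather than $\tau(p)$, while you track leaf memberships directly, but the underlying argument is the same.
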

\begin{proof} [Proof of Lemma \ref{lemmalpsforR}]
Since two statements are symmetric, we will only show the first statement. 
Let $t_0,t_1$ be the unique real numbers such that $[v,w] =g_{t_0}[v,w]_\C\in W^u(v)$ and $[\F(v),\F(w)] =g_{t_1}[\F(v),\F(w)]_\C \in W^u(\F(v))$.
% where the bracket $[\cdot,\cdot]$ is from Definition \ref{defn: LPS}.

Note that $g_{\tau(v)+t_0}[v,w]_\C$ is the unique vector which lies on both $W^u(\F(v))$ and $W^{cs}(g_{\tau(v)}w)$. Since $W^{cs}(g_{\tau(v)}w)$ coincides with $W^{cs}(\F(w))$, we have $g_{\tau(v)+t_0}[v,w]_\C = g_{t_1}[\F(v),\F(w)]_\C$. In particular,  $\F$ maps $[v,w]_\C$ to $[\F(v),\F(w)]_\C$ with $\tau([v,w]_\C)=\tau(v)+t_0-t_1$.
\end{proof}

We use the elements from $\mathcal{R}$ to establish the alphabet in the target shift space. Following \cite{burns2014lyapunov}, for $N\geq 1$ we define 
$$\mathcal{R}_N :=\Big\{\mathcal{D} = \bigcap_{j=-N}^{j=N}\mathcal{F}^{-j}R^j \colon R^j\in \mathcal{R} \text{ and } \mathcal{D} \cap \Lambda \neq \emptyset\Big\}$$ as the collection of sets of the form $\displaystyle \bigcap_{j=-N}^{j=N}\mathcal{F}^{-j}R^j $ that contains at least one vector in $\Lambda$.

By translating Lemma \ref{lem: dist formula} to this setting, the following lemma establishes the uniform hyperbolicity of $\F$ restricted to vectors that belong to the same element of $\mathcal{R}$ under $\F$. 
%In particular, the following lemma improves \eqref{eq: almost non-increase} for such vectors.
\begin{lemma}\label{lem: hyperbolicity}
There exists $\g, C>0$ such that the following holds: for any $n\in \N$, $v \in T^1S$, and $w \in \W^s(v)$ such that $\F^j(v)$ and $\F^j(w)$ belong to the same element of $\mathcal{R}$ for every $0 \leq j \leq n$,
$$d(\F^n(v),\F^n(w)) \leq C e^{-\g n}d(u,v).$$
Similarly, for any $v \in T^1S$ and $w \in \W^u(v)$ such that $\F^j(v)$ and $\F^j(w)$ belong to the same element of $\mathcal{R}$ for every $-n \leq j \leq 0$,
$$d(\F^{-n}(v),\F^{-n}(w)) \leq C e^{-\g n}d(u,v).$$
%In particular, when $v,w$ belong to the same element of $\mathcal{R}_n$, then both inequalities hold.
\end{lemma}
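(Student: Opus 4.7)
The plan is to use the inequalities in \eqref{eq: d and d^s} to convert $d$-distances between vectors in $\mathcal{W}^s$ into intrinsic $d^s$-distances on $W^s$, apply Lemma \ref{lem: dist formula} for exponential contraction along the flow, and then convert back. Setting $T_v := \sum_{j=0}^{n-1}\tau(\mathcal{F}^j(v))$, the lower bound $\tau \geq c_1$ gives $T_v \geq nc_1$, while the construction of $\mathcal{R}$ ensures that the orbit segment $\{g_\tau v : 0 \leq \tau \leq T_v\}$ stays inside $g_{[-\frac{1}{2},\frac{1}{2}]}\mathcal{C} \subseteq U \subseteq \mathrm{Reg}_T(\eta)$, and likewise for the orbit of $w$.

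First I would pass from $w \in \mathcal{W}^s(v) = W^{cs}(v) \cap D_\delta(v)$ to its strong-stable representative $u_0 := g_{s_0} w \in W^s(v)$ for the unique small $s_0 \in \R$, and similarly introduce $u_n := g_{s_n}\mathcal{F}^n(w) \in W^s(\mathcal{F}^n(v))$, using that $\mathcal{F}^n(w) \in \mathcal{W}^s(\mathcal{F}^n(v))$ by inductive application of Lemma \ref{lemmalpsforR}. A key observation is the identity $u_n = g_{T_v} u_0$: both points lie on the orbit of $w$ as well as on $W^s(\mathcal{F}^n(v))$, and since $E^c \cap E^s = \{0\}$ these two one-dimensional curves are transverse in $W^{cs}(\mathcal{F}^n(v)) = W^{cs}(v)$, hence meet in at most one point locally. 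Applying \eqref{eq: d and d^s} at the two endpoints and Lemma \ref{lem: dist formula} to the $W^s(v)$-segment from $v$ to $u_0$ then yields
$$d(\mathcal{F}^n(v), \mathcal{F}^n(w)) \leq \beta \, d^s(u_n, \mathcal{F}^n(v)) \leq \beta C \, d^s(u_0, v) \, e^{-\eta T_v/(2T)} \leq \beta C \kappa \, d(v,w) \, e^{-\eta c_1 n/(2T)},$$
which is the desired estimate with $\gamma := \eta c_1/(2T)$ and constant $\beta C \kappa$.

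The main obstacle is verifying the hypothesis of Lemma \ref{lem: dist formula}, namely that the $W^s(v)$-segment between $v$ and $u_0$ remains in $\mathrm{Reg}_T(\eta)$ under $g_\tau$ for all $\tau \in [0, T_v]$. Its initial length is bounded by $d^s(u_0, v) \leq \kappa d(v, w) \leq \kappa\alpha$ since the elements of $\mathcal{R}$ have diameter at most $\alpha$, and by Proposition \ref{prop: properties} (5) this length is non-increasing along the forward flow. Consequently every flowed point lies within $d$-distance $\kappa\alpha$ of the orbit $g_\tau v \subseteq \overline{U}$. Since $\lambda_T$ is continuous and $\overline{U}$ is compactly contained in the open set $\{\lambda_T > \eta/2\}$, for $\alpha$ sufficiently small---which we are free to arrange in the construction of $\mathcal{R}$---the entire flowed segment lies in $\mathrm{Reg}_T(\eta/2)$. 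Applying Lemma \ref{lem: dist formula} with $\eta/2$ in place of $\eta$ then suffices for the argument, at worst halving the decay rate.

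The unstable statement follows by a symmetric argument using the backward flow: the analogues of \eqref{eq: d and d^s} and the second half of Lemma \ref{lem: dist formula} apply to the segment in $W^u(v)$ between $v$ and the strong-unstable representative of $w$, which stays in $\mathrm{Reg}_T(\eta/2)$ under $g_{-\tau}$ while the backward iterates $\mathcal{F}^{-j}(v), \mathcal{F}^{-j}(w)$ remain in common rectangles.
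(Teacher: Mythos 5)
Your proposal is correct and follows essentially the same route as the paper's proof: pass from $w\in\W^s(v)$ to its strong-stable representative $g_{t_0}w\in W^s(v)$, observe that the time-$S_n\tau(v)$ map of the geodesic flow carries the $W^s(v)$-segment from $v$ to $g_{t_0}w$ onto the $W^s(\F^n(v))$-segment from $\F^n(v)$ to $g_{t_1}\F^n(w)$, apply Lemma \ref{lem: dist formula} using $S_n\tau(v)\geq nc_1$, and convert back via \eqref{eq: d and d^s}. The only difference is that you spell out two points the paper leaves implicit: the identity $u_n = g_{T_v}u_0$ via transversality of $E^c$ and $E^s$, and the verification that the flowed $W^s$-segment remains in the region of uniform hyperbolicity. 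Your fallback to $\Reg_T(\eta/2)$ is not actually needed — the paper's construction already arranges $g_{[-\frac{1}{2},\frac{1}{2}]}\C \subset U \subseteq \Reg_T(\eta)$ with $U$ open, and by Proposition \ref{prop: properties}(5) the flowed segment stays within $d$-distance $O(\k\alpha)$ of the orbit of $v$, which stays in $g_{[-\frac{1}{2},\frac{1}{2}]}\C$; so taking $\alpha$ small keeps the entire segment in $U \subseteq \Reg_T(\eta)$ directly, giving the decay rate $\eta c_1/(2T)$ with no halving.
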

\begin{proof}We prove the first statement; the second statement can be proved analogously. 
As above, there exist small $t_0,t_{1}\in \R$ such that $g_{t_{0}}w\in W^s(v)$ and that $g_{t_{1}}\F^{n}(w)\in W^s(\F^{n}(v))$. In particular, the line segment connecting $v$ and $g_{t_0}w$ along $W^s(v)$ is mapped under the time $S_n\tau(v):=\sum\limits_{j=0}^{n-1}\tau(\mathcal{F}^j(v))$ map of the geodesic flow to the line segment connecting $\F^{n}(v)$ and $g_{t_{1}}\mathcal{F}^{n}(w)$ along $W^s(\F^{n}(v))$. Since the entire process lies completely within $\Reg_T(\eta)$ from the construction, Lemma \ref{lem: dist formula} applies. 

Denoting the constant from Lemma \ref{lem: dist formula} by $C_0$, we have 
\begin{align*}
 d^s(\mathcal{F}^{n}(v),g_{t_{1}}\mathcal{F}^{n}(w)) &\leq C_0\exp\Big( -\frac{\eta}{2T} \cdot S^n_\tau(v) \Big) d^s(v,g_{t_0}w)\\
    &\leq C_0\k\exp\Big(-\frac{\eta}{2T} \cdot c_1 n \Big) d(v,w)
\end{align*}
where the second inequality uses \eqref{eq: d and d^s} as well as $S^n_\tau(v) \geq nc_1$ from the fact that the first return time $\tau$ is bounded below by $c_1$. Applying \eqref{eq: d and d^s} again to $ d^s(\mathcal{F}^{n}(v),g_{t_1}\mathcal{F}^{n}(w))$ establishes the lemma with $C :=C_0 \b\k$ and $\g := \eta c_1/(2T)$.
\end{proof}

The following lemma is an easy consequence of Lemma \ref{lem: hyperbolicity}.

\begin{lemma} \label{lemmasizeofRN}
For any $\ep>0$, there exists $N_2=N_2(\ep)\in \mathbb{N}$ such that $\text{diam}(\D)<\ep$ for every $N >N_2$ and $\D\in \mathcal{R}_N$. 
\end{lemma}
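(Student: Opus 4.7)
The plan is to reduce the diameter bound to two one-dimensional estimates---one along the unstable foliation $\W^u$ and one along the stable foliation $\W^s$---and then to extract exponential contraction from Lemma~\ref{lem: hyperbolicity}. Fix $\D = \bigcap_{j=-N}^{N} \F^{-j}R^j \in \mathcal{R}_N$ and arbitrary $v, w \in \D$. The first step will be to check that the product point $u := [v,w]_\C$ itself belongs to $\D$: combining both halves of Lemma~\ref{lemmalpsforR} with the fact that each $R^j$ is a su-rectangle (hence closed under $[\cdot,\cdot]_\C$), a short induction on $|j|$ yields $\F^j(u) \in R^j$ for all $-N \leq j \leq N$. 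Then $d(v,w) \leq d(v,u) + d(u,w)$ reduces the problem to bounding the unstable leg $v,u$ and the stable leg $u,w$ separately.

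For the unstable leg, since $u \in \W^u(v)$ and $\F^k(v), \F^k(u)$ both lie in $R^k$ for every $0 \leq k \leq N$, I would invoke the second (unstable) assertion of Lemma~\ref{lem: hyperbolicity} applied to the base pair $\F^N v, \F^N u$ with exponent $n = N$, obtaining
\[
d(v,u) \,=\, d(\F^{-N}(\F^N v), \F^{-N}(\F^N u)) \,\leq\, C e^{-\g N} d(\F^N v, \F^N u) \,\leq\, C e^{-\g N} \alpha,
\]
where the final step uses $\operatorname{diam}(R^N) \leq \alpha$. The stable leg is handled symmetrically: $w \in \W^s(u)$ and the iterates $\F^k(u), \F^k(w)$ agree in $R^k$ for $-N \leq k \leq 0$, so applying the first (stable) assertion of Lemma~\ref{lem: hyperbolicity} to the pair $\F^{-N} u, \F^{-N} w$ yields $d(u,w) \leq C e^{-\g N} \alpha$. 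Combining, $\operatorname{diam}(\D) \leq 2 C e^{-\g N} \alpha$, which tends to zero uniformly over $\D \in \mathcal{R}_N$; the conclusion follows by choosing any $N_2 > \g^{-1} \log(2 C \alpha / \ep)$.

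The argument is essentially mechanical once the hyperbolicity of $\F$ along the Markov-type partition is in hand, and I anticipate no serious obstacle. The one place demanding care is the closure of $\D$ under $[\cdot,\cdot]_\C$, since Lemma~\ref{lemmalpsforR} only says $\F$ intertwines $[\cdot,\cdot]_\C$ rather than preserving it set-theoretically---at each inductive step one must verify that $\F^{j}(u)$ lands in the prescribed rectangle $R^j$, which is automatic because $R^j$ is a su-rectangle containing both $\F^j(v)$ and $\F^j(w)$. The passage between the ambient metric $d$ and the intrinsic metrics $d^{s/u}$ is already absorbed in the constant $C$ of Lemma~\ref{lem: hyperbolicity}, so no extra bookkeeping is required.
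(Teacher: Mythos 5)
Your proof is correct and follows essentially the same strategy as the paper's: establish that $\D$ is closed under the bracket $[\cdot,\cdot]_\C$ via Lemma~\ref{lemmalpsforR}, reduce the diameter estimate to a stable leg and an unstable leg, and apply the exponential contraction of Lemma~\ref{lem: hyperbolicity} together with $\operatorname{diam}(R_i)\leq\alpha$. Your decomposition $d(v,w)\leq d(v,[v,w]_\C)+d([v,w]_\C,w)$ is in fact a slightly cleaner route than the paper's (which fixes $v\in\D\cap\Lambda$, proves the leg estimates with threshold $\ep/(4\kappa\beta+2)$, and then recovers the general bound via $w=[[w,v]_\C,[v,w]_\C]_\C$ and the local-product-structure constants), but the underlying ingredients and structure are the same.
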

\begin{proof} [Proof of Lemma \ref{lemmasizeofRN}]
Let $\ep>0$ be given. In order to prove the lemma, it suffices to show that there exists $N_2 \in \N$ such that for any $N>N_2$, $\D\in \mathcal{R}_N$, $v\in \D\cap \Lambda$, and $w\in \D$, we have $\displaystyle d(v,w)<\ep/2$. 
 
We claim that we only need to show the cases where $w$ is on $\W^{s}(v) \cap \D$ or $\W^{u}(v) \cap \D$ with the upper bound $ \ep/2$ replaced by $\ep/(4\kappa\b+2)$. 
Indeed, suppose that $d(v,u)< \ep/(4\k\b+2)$ for any $u\in \W^{s}(v) \cap \D$ or $\W^{u}(v) \cap \D$. Then for any $w \in \D$, both $[v,w]_\C$ and $[w,v]_\C$ belong to $\D$ from Lemma \ref{lemmalpsforR}, and the triangle inequality gives $d([v,w]_\C,[w,v]_\C)<\ep/(2\k\b+1)$. Since $w$ coincides with $[[w,v]_C,[v,w]_\C]_\C$, we obtain $\displaystyle d([w,v]_\C,w)<\frac{\k\b \ep }{2\k\b+1}$ from \eqref{eq: d relation}. Combined with $\displaystyle d(v,[w,v]_\C)<\ep/(4\k\b+2)$, we get
$$d(v,w) \leq d(v,[w,v]_\C)+d([w,v]_\C,w) \leq \frac{\k\b\ep}{2\k\b+1}+\frac{\ep}{4\k\b+2}  = \frac{\ep}{2}.$$

Since the diameter of $R_i$ is bounded above by $\a$, the rest of the proof is now due to Lemma \ref{lem: hyperbolicity}. 
\end{proof}

While Lemma \ref{lem: hyperbolicity} establishes uniform hyperbolicity of $\F$, the following observation provides an alternative way to compare distance under small number of iterations of $\F$.
Consider any $u,v$ in the same element of $\C$ such that $u \in \W^s(v)$, and suppose that $\F^n(v)$ and $\F^n(u)$ belong to the same element of $\C$ for some $n\in \N$. There exist unique real numbers $t_0,t_1$ such that $g_{t_0}u\in  W^s(v)$ and $g_{t_1}\F^n(u) \in W^s(\F^n(v))$. Then $v$ and $g_{t_0}u$ are mapped to $\F^n(v)$ and $g_{t_1}\F^n(u)$ respectively under the time $S_n\tau(v)$ map of the geodesic flow. In particular,
using \eqref{eq: d and d^s} we have
\begin{equation}\label{eq: almost non-increase}
d(\F^n(v),\F^n(u)) \leq \b d^s(\F^n(v),g_{t_1}\F^n(u)) \leq \b d^s(v,g_{t_0}u) \leq \b\k d(v,u)
\end{equation}
where the second inequality is due to the fact that the $d^s$-distance is non-increasing with respect to the forward geodesic flow. Such an observation may be interpreted as that the distance between any two vectors on the same $\W^s$ leaf is almost non-increasing under $\F$. 
Likewise, the analogous statement holds for $d^u$ with respect to $\F^{-1}$.
\subsection{Choice of alphabets for the subshift}
We will choose large $N$ and use elements in $\mathcal{R}_N$ as the alphabet of the shift space. 
\begin{definition}
Given a bi-infinite sequence $(\cdots,a_{-1},a_0,a_1,\cdots)=(a_i)_{i\in \Z}$ with $a_i\in \mathcal{R}_N$ for all $i\in \mathbb{Z}$, we follow the definition from \cite{burns2014lyapunov} and call it \emph{$N$-admissible} if for any $i\in \mathbb{Z}$, there exists $u_i\in a_i\cap \Lambda$ such that $$\mathcal{F}(u_i)\in a_{i+1}.$$
\end{definition}
Denote by $\mathcal{A}_N$ the set of all $N$-admissible sequences. Notice that the $\mathcal{A}_N$ naturally has the local product structure defined by $$[a,b] := (\ldots,a_{-2},a_{-1},a_0,b_1,b_2,\ldots)$$
for any $a = (a_i)_{i\in \Z}$ and $b =(b_i)_{i\in \Z}$ with $a_0 = b_0$, and such a product structure will translate to the desired product structure on $\wt{\Lambda}$.

\begin{definition}
For any $a=(a_i)_{i\in \Z}\in \mathcal{A}_N$ and $\ep>0$, we call $w\in \C$ an \emph{$\ep$-shadowing} of $a$ if there exists $u_i\in \Lambda\cap a_i \cap \mathcal{F}^{-1}(a_{i+1})$ for each $i \in \Z$ such that
$$d(\mathcal{F}^i(w),u_i)<\ep.$$
\end{definition}

We will show in the next subsection that for any $\ep>0$ sufficiently small, there exists $N_0\in \N$ such that for any $N>N_0$, every element in $\mathcal{A}_N$ has a unique $\ep$-shadowing. Furthermore, we will show that such a shadowing map $\psi:\mathcal{A}_N\to \C$ is injective. 
To construct $\psi$, we begin by setting 
$$\Delta:=d\Big(\Lambda\cap \C, \bigcup\limits_{j=1}^m \partial R_j\Big),$$
 which is necessarily positive because $\mathcal{R}$ consists of su-rectangles and $\Lambda$ does not intersect the boundary of any su-rectangle. Fix $N_1\in \N$ such that $Ce^{-\g N_1}<1/2$ where $C,\g$ are from Lemma \ref{lem: hyperbolicity}, and choose 
 $$\ep\in\Big(0, \frac{\Delta}{2(1+\b^2\k^2)}\Big) \text{ and } N_0>\max\{N_1,N_2(\ep)\}$$
where $N_2$ is from Lemma \ref{lemmasizeofRN}.

We briefly summarize the consequences of such choices of constants.
With such a choice of $\Delta$, whenever $u \in \Lambda \cap \C$ and $v \in \C$ satisfies $d(u,v) <\Delta$, then $v$ belongs to the same element of $\mathcal{R}$ that contains $u$. From the choice of $N_0$,
whenever $w\in \W^s(u)$ such that $\F^i(w)$ and $\F^i(u)$ belong to the same element of $\mathcal{R}$ for all $0 \leq i \leq N_0$, then
$$
d(\mathcal{F}^{N_0}(u),\mathcal{F}^{N_0}(w))\leq \frac{1}{2}d(u,w).
$$
and likewise for $w \in \W^u(v)$ with respect to $\F^{-N_0}$.
From Lemma \ref{lemmasizeofRN}, whenever $\mathcal{F}^i(u)$ and $\F^i(v)$ belong to the same element of $\mathcal{R}$ for all $-N_0 \leq  i\leq N_0$, then $u$ and $v$ belong to the element of $\mathcal{R}_{N_0}$, and hence $d(u,v)<\ep$. We will use these facts repeatedly in the following lemma where we construct the required shadowing map $\psi$, and instead of the first return map $\F$, from now on we will work with its $N_0$-th power $\G:=\F^{N_0}.$

%The following lemma constructs the required shadowing map $\psi$.
While we follow the classical proof of the shadowing lemma using the uniform hyperbolicity established in Lemma \ref{lem: hyperbolicity}, such hyperbolicity is only guaranteed contingent on the assumption that the vectors whose distance are being compared remain in the same elements of $\mathcal{R}$. In particular, at each step 
we have to ensure that relevant vectors in consideration belong to the suitable element of $\mathcal{R}$.

\subsection{Construction of the subshift} With the above choice of $\ep$ and $N_0$, the goal of this subsection is to build an injective coding map using the $N_0$-th power map $\G:=\F^{N_0}$. 
\begin{proposition}  \label{prop: psi injective}
Any $2N_0$-admissible sequence has a unique $\ep$-shadowing. Moreover, the shadowing map $\psi: \mathcal{A}_{2N_0}\to \C$ is injective. 
\end{proposition}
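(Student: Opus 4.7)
The plan is to adapt the classical pseudo-orbit shadowing argument to this cross-section setup, using the strong contraction of the $N_0$-th power $\G=\F^{N_0}$ and the local product structure preserved by $\F$ on the rectangles of $\mathcal{R}$ (Lemma~\ref{lemmalpsforR}). Given $a \in \mathcal{A}_{2N_0}$ and witnesses $u_i \in \Lambda\cap a_i \cap \F^{-1}(a_{i+1})$, the choice $2N_0 > N_2(\ep)$ combined with Lemma~\ref{lemmasizeofRN} yields $\text{diam}(a_i) < \ep$. In particular $\F(u_i)$ and $u_{i+1}$ both lie in $a_{i+1}$, so $\{u_i\}$ behaves like an $\ep$-pseudo-orbit of $\F$, and shadowing $a$ amounts to finding $w$ with $\F^i(w) \in a_i$ for every $i$.

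For existence, I plan to construct $w$ as the unique point in the nested family
$$T_n := \bigcap_{|j|\leq n}\F^{-j}(a_j).$$
I would show $T_n \neq \emptyset$ by induction on $n$, at each stage adjusting a point of $T_{n-1}$ along the stable or unstable direction via the Bowen bracket $[\cdot,\cdot]_\C$ paired with the new witness $u_{\pm n}$; Lemma~\ref{lemmalpsforR} guarantees that $\F$ sends such brackets to brackets provided both endpoints lie in a common rectangle, a condition ensured by the witness structure and the smallness of $\text{diam}(a_i)$. The diameter estimate then follows from Lemma~\ref{lem: hyperbolicity} applied to $\G$: every $N_0$ iterates spent in common rectangles contract distances along $\W^s$ and $\W^u$ by a factor at most $1/2$, so $\text{diam}(T_n)$ decays geometrically and $\bigcap_n T_n = \{w\}$ is a single point. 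The inclusion $\F^i(w) \in a_i$ together with $\text{diam}(a_i) < \ep$ then delivers the required shadowing bound.

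For uniqueness of the shadowing and injectivity of $\psi$, the argument is comparable. If $w,w'$ both $\ep$-shadow the same $a$, then $d(\F^i w, \F^i w') < 2\ep < \Delta$ for every $i$, forcing all iterates to share a common element of $\mathcal{R}$; forming $y := [w,w']_\C$ and invoking Lemma~\ref{lem: hyperbolicity} in forward time along the stable leg $\{w,y\}$ and in backward time along the unstable leg $\{y,w'\}$ collapses them to a single point, giving $w=w'$. For injectivity, if $\psi(a)=\psi(a')=w$ then the two witness sequences satisfy $d(u_i,u_i')<2\ep<\Delta$, and since each $u_i \in \Lambda$ sits at distance $\geq \Delta$ from every rectangle boundary, $u_i'$ lies in the same element of $\mathcal{R}$ as $u_i$. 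Propagating this matching to the iterates $\F^k u_i$, $\F^k u_i'$ across $|k|\leq 2N_0$---where the smallness of $\ep$ relative to $\Delta/(2(1+\b^2\k^2))$ absorbs the bounded expansion and contraction of $\F^k$ on $\Lambda$ over this finite window---identifies the full rectangle-sequences defining $a_i$ and $a_i'$, so $a=a'$.

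The main obstacle I anticipate is the existence step. Since $\F$ is only a first-return map on a non-invariant cross section with variable return times, the classical shadowing lemma cannot be applied verbatim; one must carefully shepherd the Bowen bracket through the induction, ensuring at each step that the intermediate points stay far enough from the rectangle boundaries to keep Lemma~\ref{lemmalpsforR} applicable and the $\G$-contraction of Lemma~\ref{lem: hyperbolicity} available. Once this geometric bookkeeping is in place, the uniqueness and injectivity conclusions follow routinely from the constants $\Delta$, $\ep$, $N_0$ fixed at the end of $\mathsection\ref{sec: t}$.
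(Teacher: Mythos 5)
Your plan follows the same overall blueprint as the paper's argument (Bowen brackets against the witnesses, $\G=\F^{N_0}$-contraction, the constants $\Delta$, $\ep$, $N_0$), and your uniqueness and injectivity arguments are sound, but there is a genuine gap in the existence step, and it is precisely the obstacle you flag in your last paragraph without resolving it. You assert that Lemma~\ref{lemmalpsforR} applies because ``both endpoints lie in a common rectangle, a condition ensured by the witness structure and the smallness of $\mathrm{diam}(a_i)$,'' but this is exactly what must be proven, and it does not follow from those two facts alone. The witnesses $u_i$ have controlled $\F$-iterates only inside the $\pm 2N_0$ window that admissibility guarantees, and $\F$ is merely a first-return map to a non-invariant cross section (Remark~\ref{rem: F}): for a newly bracketed point such as $\w_n := [\G(\w_{n-1}),\u_n]_\C$, there is no a priori reason that $\F^{j}(\G(\w_{n-1}))$ lands in any rectangle of $\mathcal{R}$ for $1\le j\le N_0$, and hence no a priori reason that $\G^{-j}(\w_n)$ is even well-defined for $j\ge 3$. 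As a consequence, the sets $T_n := \bigcap_{|j|\le n}\F^{-j}(a_j)$ in your construction are not obviously non-empty (indeed $\F^{-j}(a_j)$ itself is only defined on the locus where $\F^j$ makes sense), so the nesting argument does not get off the ground without additional input.

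The paper's resolution of this is the real content of the proof: it uses the observation \eqref{eq: almost non-increase}, that the distance along $\W^s$ (resp.~$\W^u$) is almost non-increasing under forward (resp.~backward) applications of $\F$, together with the defining property of $\Delta$, to show inductively that each iterate of the constructed bracket stays within $\Delta$ of a vector in $\Lambda$ and therefore falls into the correct rectangle $R^j$; see Lemma~\ref{lem: w_1}, the verification of \eqref{eq: correct rectangle}, and Lemma~\ref{lem: inductive}. To repair your proposal one would need to supply an analogous quantitative control for the points produced at each stage of the nested-intersection argument. The rest of your outline --- the geometric decay of $\mathrm{diam}(T_n)$ via Lemma~\ref{lem: hyperbolicity}, the uniqueness via $2\ep<\Delta$ together with Lemma~\ref{lemmasizeofRN}, and injectivity via recovering the rectangle itinerary from the orbit of $\psi(a)$ --- matches the paper and would go through once the existence step is filled in.
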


Let $a=(a_i)_{i\in \Z} \in \mathcal{A}_{2N_0}$ be $2N_0$-admissible. For each $i\in \Z$, let $u_i \in a_i\cap \Lambda $ such that $\mathcal{F}(u_i) \in a_{i+1}$ and $R^i$ be the element of $\mathcal{R}$ containing $a_i$. 
For simplicity of notations, we denote $$\u_i:=u_{iN_0} \text{ and } \RR^i:=R^{iN_0}$$ so that $\G$ maps $\u_i$ into $\RR^{i+1}$ for each $i \in \Z$. Moreover, we denote by $\D^i$ the element of $\mathcal{R}_{N_0}$ containing $\u_i$, whose diameter is bounded above by $\ep$ from Lemma \ref{lemmasizeofRN} and the choice of $N_0$.

The idea behind the construction of $\psi$ is quite simple which we briefly sketch in this paragraph. We first want to find $\w \in \RR^0 \cap \W^u(\u_0)$ such that $\G^n(\w)$ is well-defined belongs to $\G^n(\w) \in \RR^n$ for all $n\in \N$. Since $\G$ is uniformly hyperbolic on the domain in which it is well-defined, we can construct 
$\w$ as the limit of a (exponentially converging) Cauchy sequence $\{\G^{-n}(\w_n)\}_{n\in \N}$ on $\RR^0 \cap \W^u(\u_0)$ for some well-chosen $\w_n \in \RR^n$ whose image under $\G^{-n}$ is well-defined. Then we find $\v\in \RR^0 \cap \W^s(\u_0)$ satisfying the analogous properties with respect to $\G^{-1}$, and define $\psi(a)$ as the local product $[\v,\w]_\C$. It will then be easy to verify that $\psi(a)$ is the unique shadow of $a$, as claimed in the proposition.

For each $n\in \N_0$, we will first inductively construct $\w_{n} \in \RR^{n}$ satisfying the following properties: $$d(\w_n,\u_n)<\ep,$$ and for each $0 \leq j \leq n$, 
\begin{equation}\label{eq: 1}
\G^{-j}(\w_n) \in \RR^{n-j} \cap \W^u(\w_{n-j})
\end{equation}
and
\begin{equation}\label{eq: 2}
d(\G^{-j+1}(\w_{n-1}),\G^{-j}(\w_n))<\ep/2^{j}.
\end{equation}

Setting $\w_0 := \u_0$, the above listed properties (except for the last property which is irrelevant as we did not define $\w_{-1}$) are trivially satisfied for $n=0$. 

From the definition of $2N_0$-admissibility, $\G^i(\w_0)$ belongs to $\RR^{i}$ for all $j \in \{-2,-1,0,1,2\}$. In particular, $$\w_{1}:=[\G(\w_0),\u_1]_\C \in \RR^1$$ 
is well-defined.  For $j\in \{-2,-1,0,1\}$, $\G^j(\w_1)$ coincides with $[\G^{1+j}(\w_0),\G^{j}(\u_1)]_\C$ from Lemma \ref{lemmalpsforR}, and hence belongs to $\RR^{1+j}$. This implies that $\G(\w_0)$, $\w_1$, and $\u_1$ belong in the same element $\D^1$ of $\mathcal{R}_{N_0}$ and that $\G^{-1}(\w_1)\in \D^{0}$. Since the diameter of $\D^1$ is at most $\ep$, the choice of constant $N_0$ gives that $\displaystyle d(\w_0,\G^{-1}(\w_1)) < \frac{1}{2} d(\G(\w_0),\w_1) \leq \frac{\ep}{2}$. In particular, the above listed properties \eqref{eq: 1} and \eqref{eq: 2} for $\w_1$ hold for $n=1$:
$$d(\w_1,\u_1)<\ep,~d(\G(\w_0),\w_1)<\ep,\text{ and } d(\w_0,\G^{-1}(\w_1))<\ep/2.$$

Before moving onto the construction of $\w_2$, we establish another property of $\w_1$. 
From the above paragraph, we have $\G(\w_1) = [\G^2(\w_0),\G(\u_1)]_\C$ which belongs to $\mathsf{R}^2$. Since $\F$ is well-defined on each rectangle in $\mathcal{R}$, we know that $\F(\G(\w_1))$ is well-defined and belongs to $\C$. A priori, we do not know whether it belongs to one of the rectangles in $\mathcal{R}$ nor which rectangle it belongs to, if it belongs to one. 
This is because although we can write $\F(\G(\w_1))$ as $[\F(\G^2(\w_0)),\F(\G(\u_1))]_\C$ where $\F(\G(\u_1)) \in R^{2N_0+1}$, we do not know which rectangle $\F(\G^2(\w_0))$ belongs to, if it belongs to one; see Remark \ref{rem: F}. However, 
the following lemma shows that $\F(\G(\w_1))$ belongs to $R^{2N_0+1}$ as expected, and the same holds for $\F^j(\G(\w_1))$ for all $1 \leq j \leq N_0$.

\begin{lemma}\label{lem: w_1}
$\F^j(\G(\w_1)) \in R^{2N_0+j}$ for all $1 \leq j \leq N_0$. In particular, $\G^2(\w_1)\in \mathsf{R}^3$ is well-defined, and hence, $\G(\w_1) \in \D^2$.
\end{lemma}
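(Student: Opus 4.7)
The plan is to prove Lemma \ref{lem: w_1} by induction on $j$, exploiting the fact that $\G(\w_1) = [\G^2(\w_0), \G(\u_1)]_\C$ lies on the local stable manifold $\W^s(\G(\u_1))$ with $\G(\u_1)\in \Lambda$. Two tools are central: the contraction estimate from Lemma \ref{lem: hyperbolicity} at scale $N_0$ for the initial distance bound, and the almost-non-increasing estimate \eqref{eq: almost non-increase} along stable leaves for controlling successive iterates of $\F$.

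As a preliminary step, I establish $d(\G(\w_1), \G(\u_1)) < \ep/2$. Since $\w_1 \in \W^s(\u_1)$ and both vectors lie in the common element $\D^1 \in \mathcal{R}_{N_0}$, their first $N_0$ iterates under $\F$ lie in a common chain of rectangles. Lemma \ref{lem: hyperbolicity} therefore yields
$$d(\G(\w_1),\G(\u_1)) = d(\F^{N_0}(\w_1),\F^{N_0}(\u_1)) \leq Ce^{-\g N_0}\,d(\w_1,\u_1) < \tfrac{\ep}{2},$$
using $Ce^{-\g N_0} < 1/2$ from the choice of $N_0$ and $d(\w_1,\u_1) < \ep$ from Lemma \ref{lemmasizeofRN}.

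The induction on $j$ proceeds as follows. The base case $j=0$ is simply $\G(\w_1) \in \RR^2 = R^{2N_0}$. For the inductive step, suppose $\F^i(\G(\w_1)) \in R^{2N_0+i}$ for $0 \leq i \leq j-1$. Then $\F^{j-1}(\G(\w_1))$ lies in the same rectangle $R^{2N_0+j-1}$ as $\F^{j-1}(\G(\u_1))$, and since $\F$ sends any rectangle in $\mathcal{R}$ into a single component $C_{i'}$ of $\C$, the vectors $\F^j(\G(\w_1))$ and $\F^j(\G(\u_1))$ lie in that common $C_{i'}$. Invoking \eqref{eq: almost non-increase} for the pair $\G(\w_1), \G(\u_1)$ along $\W^s$ gives
$$d(\F^j(\G(\w_1)),\F^j(\G(\u_1))) \leq \b\k\,d(\G(\w_1),\G(\u_1)) < \b\k\,\ep/2 < \Delta,$$
where the last inequality uses $\ep < \Delta/(2(1+\b^2\k^2))$ together with $\b\k/(1+\b^2\k^2) \leq 1/2$. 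Since $\F^j(\G(\u_1))$ lies in $\Lambda$ and hence has distance at least $\Delta$ from $\bigcup \partial R_k$, the vector $\F^j(\G(\w_1))$ must lie in the same rectangle $R^{2N_0+j}$, completing the induction.

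Taking $j = N_0$ yields $\G^2(\w_1) = \F^{N_0}(\G(\w_1)) \in R^{3N_0} = \RR^3$. The claim $\G(\w_1) \in \D^2$ then follows by combining the forward iterates $\F^j(\G(\w_1)) \in R^{2N_0+j}$ for $0 \leq j \leq N_0$ just obtained with the backward iterates $\F^j(\G(\w_1)) = \F^{N_0+j}(\w_1) \in R^{N_0+(N_0+j)}$ for $-N_0 \leq j \leq 0$, which are automatic from $\w_1 \in \D^1$. The main subtlety is verifying at each inductive step that $\F^j(\G(\w_1))$ lies in the same component of $\C$ as $\F^j(\G(\u_1))$, so that \eqref{eq: almost non-increase} applies; this is resolved by the structure of the induction, since the hypothesis at step $j-1$ forces the two vectors to share a rectangle whose $\F$-image is contained in a single component of $\C$.
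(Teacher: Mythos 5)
Your proof is correct and follows essentially the same inductive argument as the paper: establish $d(\G(\w_1),\G(\u_1)) < \ep/2$ via the choice of $N_0$, then inductively apply \eqref{eq: almost non-increase} against $\G(\u_1)$ to bound $d(\F^j(\G(\w_1)),\F^j(\G(\u_1)))$ by $\b\k\ep/2 < \Delta$ and place $\F^j(\G(\w_1))$ in the correct rectangle. Your version is slightly more explicit about why the inductive hypothesis guarantees the two iterates land in a common component of $\C$ so that \eqref{eq: almost non-increase} is applicable at each step, but the underlying reasoning is the same.
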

\begin{proof}
We begin by noting that $\G(\w_1) \in \W^s(\G(\u_1))\cap \mathsf{R}^2$ and that $d(\G(\w_1),\G(\u_1)) \leq \ep/2$ from the choice of $N_0$.
By applying \eqref{eq: almost non-increase} with $n=1$ gives
$$d(\F(\G(\u_1)),\F(\G(\w_1))) \leq \b\k d(\G(\u_1),\G(\w_1)) \leq \b\k\ep/2 \leq \Delta.$$
In particular, from the defining property of $\Delta$ and the fact that $\F(\G(\u_1)) \in \Lambda$ we have $\F(\G(\w_1)) \in R^{2N_0+1}$. 
Therefore, $\F$ can be iterated for $\F(\G(\w_1))$ since $\F$ is well-defined on each rectangle in $\mathcal{R}$.
Now inductively applying the same argument using \eqref{eq: almost non-increase} with $n=2,\ldots, N_0$ proves the lemma.
\end{proof}

With such properties of $\w_1$ in mind, we define 
$$\w_2:=[\G(\w_1),\u_2]_\C \in \RR^2.$$
See Figure \ref{fig: 2} which also contains $\G^2(\w)$ where $\w$ is a vector (yet to be defined) described in the sketch of proof appearing below Proposition \ref{prop: psi injective}. 
As we will see in the following lemma, the statement $\G(\w_1) \in \D^2$ of the above lemma ensures that $d(\G(\w_1),\w_2)<\ep$ which corresponds to \eqref{eq: 2} of $\w_2$ for $j = 0$.
\begin{lemma} $\w_2$ satisfies \eqref{eq: 1} and \eqref{eq: 2}. Moreover, we have $\G(\w_2) \in \D^3$.
\end{lemma}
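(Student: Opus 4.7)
The plan is to follow the construction of $\w_1$ and the argument of Lemma \ref{lem: w_1} essentially verbatim, using the fact that $\G(\w_1) \in \D^2$ (from Lemma \ref{lem: w_1}) as the inductive input. The bracket $\w_2 := [\G(\w_1), \u_2]_\C$ is well-defined since $\G(\w_1), \u_2 \in \D^2 \subset \RR^2$, and applying Lemma \ref{lemmalpsforR} to each of the defining equations of $\D^2$ shows that $\w_2$ itself lies in $\D^2$. This in particular handles the $j=0$ case of \eqref{eq: 1}, and it yields $d(\G(\w_1), \w_2) < \ep$ (the $j=0$ case of \eqref{eq: 2}) since $\mathrm{diam}(\D^2) \leq \ep$ by Lemma \ref{lemmasizeofRN}.

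To treat the remaining cases $j=1,2$ of \eqref{eq: 1}, I will iterate Lemma \ref{lemmalpsforR} to commute $\G^{-j}$ with the bracket and obtain
$$\G^{-j}(\w_2) = \bigl[\G^{-(j-1)}(\w_1),\, \G^{-j}(\u_2)\bigr]_\C.$$
The first factor lies in $\RR^{2-j}$ by the inductive hypothesis \eqref{eq: 1} for $\w_1$, and the second factor lies in $\RR^{2-j}$ by $2N_0$-admissibility applied to $\u_2 = u_{2N_0}$; thus the bracket itself lies in $\RR^{2-j} \cap \W^u(\G^{-(j-1)}(\w_1))$. For the remaining cases $j=1,2$ of \eqref{eq: 2}, I will invoke the $\tfrac{1}{2}$-contraction of $\G^{-1}$ on $\W^u$ coming from the choice of $N_0$: since $\G^{-j}(\w_2) \in \W^u(\G^{-(j-1)}(\w_1))$ and the pair stays inside a common rectangle of $\mathcal{R}$ at each of the $N_0$ intermediate $\F^{-1}$-steps (from $\w_2 \in \D^2$ on one side, together with backward control on iterates of $\w_1$ coming from its construction as $[\G(\u_0), \u_1]_\C$ and admissibility on the other), iterating the contraction delivers $d(\G^{-(j-1)}(\w_1), \G^{-j}(\w_2)) \leq 2^{-j}\ep$.

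For the moreover statement $\G(\w_2) \in \D^3$, I will mirror the proof of Lemma \ref{lem: w_1} with $(\w_1, \u_1)$ replaced by $(\w_2, \u_2)$. From $\w_2 \in \W^s(\u_2) \cap \D^2$ with $d(\w_2, \u_2) \leq \ep$, the $\tfrac{1}{2}$-contraction of $\G$ on $\W^s$ gives $d(\G(\w_2), \G(\u_2)) \leq \ep/2$, which already places $\G(\w_2) \in \RR^3$. Then \eqref{eq: almost non-increase} applied with $n = 1, \ldots, N_0$ yields $d(\F^n(\G(\w_2)), \F^n(\G(\u_2))) \leq \b\k \cdot \ep/2 < \Delta$; since $\F^n(\G(\u_2)) \in \Lambda \cap R^{3N_0+n}$ by admissibility, the defining property of $\Delta$ forces $\F^n(\G(\w_2)) \in R^{3N_0+n}$ for $0 \leq n \leq N_0$. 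Combined with the backward iterates already controlled by $\w_2 \in \D^2$, this gives $\G(\w_2) \in \D^3$.

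The main obstacle is not conceptual but bookkeeping: at every invocation of Lemma \ref{lemmalpsforR}, Lemma \ref{lem: hyperbolicity}, or \eqref{eq: almost non-increase}, one must verify that the intermediate $\F$-iterates of the relevant pair lie in a common rectangle of $\mathcal{R}$. This is kept tractable by the indexing consistency $R^{i,k} = R^{i+k}$ enforced by $2N_0$-admissibility (valid because $\Lambda$ avoids $\partial \C$, so vectors in $\Lambda$ uniquely determine their containing rectangle), together with the choice of $\alpha$ small enough that all distances encountered in the construction stay well below $\Delta$.
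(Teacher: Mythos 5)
Your proposal is correct and takes essentially the same approach as the paper: commute iterates of $\w_2$ with the bracket via Lemma \ref{lemmalpsforR}, locate them in the appropriate rectangles via $2N_0$-admissibility of $\u_2$ and the established properties of $\w_1$ (including $\G(\w_1)\in\D^2$ from Lemma \ref{lem: w_1}), apply the $N_0$-contraction for the distance bounds, and mirror the proof of Lemma \ref{lem: w_1} for the statement $\G(\w_2)\in\D^3$. The one place where your sourcing is slightly loose is the second contraction step ($j=2$), where the containment actually needed is $\G^{-1}(\w_2)\in\D^1$ rather than a direct consequence of $\w_2\in\D^2$; as you hint with the bookkeeping remark, this follows from $\G^{-1}(\w_2)=[\w_1,\G^{-1}(\u_2)]_\C$ together with $\w_1\in\D^1$ and admissibility of $\u_2$, which is exactly what the paper records by asserting that $\w_1$ and $\G^{-1}(\w_2)$ both lie in $\D^1$.
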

\begin{proof}
Using the fact that $a \in \mathcal{A}_{2N_0}$ is $2N_0$-admissible, we have $\G^{j}(\u_2) \in \RR^{2+j}$ for $-2 \leq j \leq 2$. From the definition of $\w_1$ and Lemma \ref{lem: w_1}, we have $\G^{1+j}(\w_1) \in \RR^{2+j}$ for $-2 \leq j \leq 1$. In particular, for the same range of $j$,
$\G^j(\w_2)$ is defined as $[\G^{1+j}(\w_1),\G^{j}(\u_2)]_\C$, and hence belongs to $\RR^{2+j}$. This implies that $\G(\w_1)$, $\w_2$, and $\u_2$ belong in the same element $\D^2$ of $\mathcal{R}_{N_0}$ and that $\w_1$ and $\G^{-1}(\w_2)$ belong in $\D^1$. Then the uniform hyperbolicity of $\G$ coming from the choice of constant $N_0$ gives \eqref{eq: 2} for $\w_2$: 
 $$d(\w_2,\u_2)<\ep, \text{ and } d(\G^{-j+1}(\w_1),\G^{-j}(\w_2))<\ep/2^j \text{ for } j \in \{0,1,2\}.$$
The remaining statement that $\G(\w_2)$ belongs to $\D^3$ follows just as in Lemma \ref{lem: w_1}.
\end{proof}

We describe another iteration prior to generalizing this process. Let
$$\w_3:=[\G(\w_2),\u_3]_\C \in \RR^3.$$
\begin{lemma} $\w_3$ satisfies \eqref{eq: 1} and \eqref{eq: 2}.
\end{lemma}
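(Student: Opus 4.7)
The plan is to mimic, essentially verbatim, the proof for $\w_2$ in the preceding lemma, with the moreover clause $\G(\w_2)\in \D^3$ playing the role that $\G(\w_1)\in \D^2$ played there. In fact $\w_3$ is already the representative inductive step: once this case is handled, the same argument---paired with an analogous moreover clause $\G(\w_n)\in \D^{n+1}$ proved along the way via a Lemma \ref{lem: w_1}-style estimate---will handle every subsequent $\w_n$.

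Concretely, I would combine $2N_0$-admissibility, which gives $\G^j(\u_3)\in \RR^{3+j}$ for $-2\leq j\leq 2$, with the preceding lemma and (unpacking $\G(\w_2)\in\D^3$ via the definition of $\mathcal{R}_{N_0}$) its moreover clause, which together give $\G^j(\w_2)\in \RR^{2+j}$ for $-2\leq j\leq 2$. Applying Lemma \ref{lemmalpsforR} then yields
$$
\G^j(\w_3) \;=\; [\G^{1+j}(\w_2),\G^j(\u_3)]_\C \;\in\; \RR^{3+j}
$$
for $-2\leq j\leq 1$, verifying \eqref{eq: 1} for $j\in\{0,1,2\}$. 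Since $\G^{1+j}(\w_2)$, $\G^j(\w_3)$, and $\G^j(\u_3)$ all lie in a single element $\D^{3+j}$ of $\mathcal{R}_{N_0}$, the choice of $N_0$ halves the $\W^u$-distance under each application of $\G^{-1}$, establishing \eqref{eq: 2} for the same $j$.

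The new wrinkle relative to $\w_2$ is the case $j = 3$ of \eqref{eq: 1}, which lies outside the $\pm 2$ window directly controlled by $2N_0$-admissibility of the $\u_i$. I would propagate information backwards through the already-established properties of $\w_1$ and $\w_0$: starting from $\G^{-2}(\w_3)\in \W^u(\w_1)\cap \RR^1$, invoking \eqref{eq: 1} for $\w_1$ to place $\G^{-1}(\w_1)\in \RR^0\cap \W^u(\w_0)$, and applying the backward unstable contraction in Lemma \ref{lem: dist formula} once more to obtain $d(\G^{-3}(\w_3),\G^{-1}(\w_1))\leq \ep/2$. Anchoring this through $\G^{-1}(\u_1)\in \Lambda\cap \RR^0$ (which is close to $\G^{-1}(\w_1)$ via the stable-leaf identification and the diameter bound on $\D^0$) and invoking the defining property of $\Delta$ then forces $\G^{-3}(\w_3)\in \RR^0\cap \W^u(\w_0)$. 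A final halving contraction, combined with \eqref{eq: 2} for $\w_2$, yields \eqref{eq: 2} at $j = 3$.

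The main obstacle I expect is pure bookkeeping: once $j$ exceeds the $\pm 2$ admissibility window, rectangle-containment has to be relayed through the inductive hypotheses on $\w_2,\w_1,\w_0$, and triangle inequalities must be chained so that the chain eventually anchors to a vector in $\Lambda$, where the defining property of $\Delta$ can be invoked. Formulating the moreover clause $\G(\w_n)\in \D^{n+1}$ cleanly so that it sets up the next stage of the induction uniformly in $n$ will also require some care.
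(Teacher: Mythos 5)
Your split into $j\in\{0,1,2\}$ and $j=3$ matches the paper's structure, and for $j\in\{0,1,2\}$ your argument is the paper's. You also correctly identify the essential difficulty at $j=3$: $2N_0$-admissibility of $a$ does not place $\F^{-j}(\G^{-2}(\u_3))$ in rectangles for $1\leq j\leq N_0$, so Lemma \ref{lemmalpsforR} cannot be used to define $\G^{-3}(\w_3)$, and one must instead fall back on a $\Delta$-argument anchored at $\Lambda$.

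Your resolution of the $j=3$ case is, however, circular. You apply the backward contraction of Lemma \ref{lem: dist formula} to get $d(\G^{-3}(\w_3),\G^{-1}(\w_1))\leq\ep/2$ and only afterwards invoke $\Delta$ to place $\G^{-3}(\w_3)$ in $\RR^0$. But the hypothesis of Lemma \ref{lem: dist formula} (equivalently the halving property of $\G^{-1}$ built into the choice of $N_0$) is that the relevant $\W^u$-segment remain in $\Reg_T(\eta)$ under the entire backward flow, which here amounts to knowing $\F^{-j}(\G^{-2}(\w_3))\in R^{N_0-j}$ for all $1\leq j\leq N_0$ — exactly the rectangle-containment you are trying to establish, and without which $\G^{-3}(\w_3)=\F^{-N_0}(\G^{-2}(\w_3))$ is not even known to be well-defined (cf.\ Remark \ref{rem: F}). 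The paper breaks this circle by first proving \eqref{eq: correct rectangle} one $\F^{-1}$-step at a time using only the \emph{unconditional} non-increasing of $d^u$ under backward flow (not the contraction), the projection comparison \eqref{eq: d and d^s}, and the $\Delta$-argument at each step (anchored through the chain $\G^{-1}(\w_2)\to\w_1\to\G(\w_0)$ to the $\Lambda$-vectors $\F^{N_0-j}(\u_0)$); only after $\G^{-3}(\w_3)$ is thereby shown to be well-defined and in $\RR^0\cap\W^u(\w_0)$ does the contraction enter, to yield the $\ep/8$ bound of \eqref{eq: 2} at $j=3$. So you have the right two ingredients, but they must be applied in the reverse order and iterated $\F^{-1}$-by-$\F^{-1}$ rather than in a single $\G^{-1}$ leap; this is not bookkeeping but the logical spine of the lemma.
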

\begin{proof}
Proceeding as in the above lemma, $\G^j(\w_3)$ belongs to $\RR^{3+j}$ for all $j\in \{-2,-1,0,1\}$.
Moreover, $\G(\w_2)$, $\w_3$, and $\u_3$ belong in the same element $\D^3$ of $\mathcal{R}_{N_0}$ and that $\w_2$ and $\G^{-1}(\w_3)=[\w_2,\G^{-1}(\u_3)]_\C$ belong in $\D^2$. Lemma \ref{lemmasizeofRN} and the choice of constant $N_0$ then give
$$
 d(\w_3,\u_3)<\ep, \text{ and } d(\G^{-j+1}(\w_2),\G^{-j}(\w_3))<\ep/2^j \text{ for } j \in \{0,1,2\},
$$
where $\G^{-2}(\w_3)=[\G^{-1}(\w_2),\G^{-2}(\u_3)]_\C$ from Lemma \ref{lemmalpsforR}.

A priori, this is all that can be deduced from the construction of $\w_3$; that is, we do not quite have the well-definedness of $\G^{-3}(\w_3)$ nor the properties corresponding to \eqref{eq: 1} and \eqref{eq: 2} for $j=3$.
%)a In particular, unlike $\G^{-2}(\w_3)$ and $\G^{-1}(\w_3)$ which are respectively defined as $[\G^{-1}(\w_2),\G^{-2}(\u_3)]_\C$ and $[\w_2,\G^{-1}(\u_3)]_\C$, the same is not quite true for $\G^{-3}(\w_3)$. 
This is because $a=(a_i)_{i\in \Z} \in \mathcal{A}_{2N_0}$ is only $2N_0$-admissible, so $\F^{-j}(\G^{-2}(\u_3))$ does not necessarily belong to $R^{N_0-j}$ for $1 \leq j \leq N_0$, and hence, $\F^{-j}(\G^{-2}(\w_3))$ cannot be defined as $[\F^{-j}(\G^{-1}(\w_2)),\F^{-j}(\G^{-2}(\u_3))]_\C$ via Lemma \ref{lemmalpsforR}. Instead, we can directly show using the definition of $\Delta$ that 
\begin{equation}\label{eq: correct rectangle}
\F^{-j}(\G^{-2}(\w_3)) \in R^{N_0-j} \text{ for all } 1\leq j \leq N_0.
\end{equation}

Indeed, from the fact $d(\G^{-1}(\w_2),\G^{-2}(\w_3)) <\ep/4$ established above, there exists a small $t_0\in \R$ such that $g_{t_0}\G^{-2}(\w_3)\in W^u_{\k\ep/4}(\G^{-1}(\w_2))$ by \eqref{eq: d and d^s}.
Since the $d^u$-distance is non-increasing with respect to the backward geodesic flow, we have
\begin{equation}\label{eq: projection}
g_{t_0-\tau_0}\G^{-2}(\w_3)\in W^u_{\k\ep/4}(\F^{-1}(\G^{-1}(\w_2)))
\end{equation}
where $\tau_0 = \tau(\F^{-1}(\G^{-1}(\w_2)))$.

Since $d(\F^{-1}(\G^{-1}(\w_2)),\F^{-1}(\w_1))<\ep/2$, $d(\F^{-1}(\w_1),\F^{-1}(\G(\w_0)))<\ep$, and $\F^{-1}(\G(\w_0))$ belongs to $\Lambda \cap R^{N_0-1}$, we have from the defining property of $\Delta$ that
$$
\W^u_{\b\k\ep/4}(\F^{-1}(\G^{-1}(\w_2))) \subset R^{N_0-1}
$$
because $\ep+\ep/2+\b\kappa\ep/4<2\ep+\b\kappa\ep/4<\Delta$.
All vectors in $\W^u_{\b\k\ep/4}(\F^{-1}(\G^{-1}(\w_2)))$ project to $W^u(\F^{-1}(\G^{-1}(\w_2)))$ along the flow direction, and by \eqref{eq: d and d^s} the image of such a projection contains  $W^u_{\k\ep/4}(\F^{-1}(\G^{-1}(\w_2)))$.
From \eqref{eq: projection} there exists $t_1\in \R$ such that $g_{t_0+t_1-\tau_0}\G^{-2}(\w_3)$ belongs to $R^{N_0-1}$, and hence, must be equal to $\F^{-1}(\G^{-2}(\w_3))$. This establishes \eqref{eq: correct rectangle} for $j=1$, and by repeating the argument, we can show \eqref{eq: correct rectangle} for the other $j$'s inductively. In particular, it shows that $\G^{-3}(\w_3)$ belongs to $\mathsf{R}^0 \cap \W^u(\w_0)$ and $d(\G^{-2}(\w_2),\G^{-3}(\w_3))<\ep/8$ from the definition of $N_0$. These correspond to \eqref{eq: 1} and \eqref{eq: 2} of $\w_3$ with $j=3$.
\end{proof}

Suppose now that $\w_0,\w_1,\ldots,\w_{n-1}$ are constructed with the listed properties \eqref{eq: 1} and \eqref{eq: 2} for some $n\in \N$.
For the general inductive step, we set
$$\w_{n}:=[\G(\w_{n-1}),\u_{n}]_\C$$ and verify the listed properties above in the following lemma:
\begin{lemma}\label{lem: inductive}
For all $0 \leq j \leq n$, $\G^{-j}(\w_n)$ is well-defined and satisfies \eqref{eq: 1} and \eqref{eq: 2}.
\end{lemma}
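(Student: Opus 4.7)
The plan is to induct on $j$, following exactly the template established for $\w_1, \w_2, \w_3$ and extending the $\Delta$-argument of \eqref{eq: correct rectangle} to handle arbitrarily many backward $\F$-iterations. The outer inductive hypothesis on $\w_{n-1}$ supplies \eqref{eq: 1} and \eqref{eq: 2} for $0 \leq j' \leq n-1$, and I will assume (established as in Lemma \ref{lem: w_1} during the preceding step) the auxiliary fact $\G(\w_{n-1}) \in \D^n$. By $2N_0$-admissibility, $\G^{j''}(\u_n) \in \RR^{n+j''}$ for $-2 \leq j'' \leq 2$.

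For the small range $0 \leq j \leq 2$, Lemma \ref{lemmalpsforR} gives $\G^{-j}(\w_n) = [\G^{-j+1}(\w_{n-1}), \G^{-j}(\u_n)]_\C \in \RR^{n-j}$ directly, because both bracket arguments lie in $\RR^{n-j}$ by the inductive hypothesis on $\w_{n-1}$ and the $2N_0$-admissibility of $\u_n$. The uniform contraction coming from the choice of $N_0$, together with Lemma \ref{lemmasizeofRN} applied to the common element of $\mathcal{R}_{N_0}$ containing $\G^{-j+1}(\w_{n-1})$ and $\G^{-j}(\w_n)$, then yields $d(\G^{-j+1}(\w_{n-1}), \G^{-j}(\w_n)) < \ep/2^j$.

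For $3 \leq j \leq n$, I would induct a second time on $j$, replicating the $\Delta$-argument in \eqref{eq: correct rectangle}. Given $\G^{-j+1}(\w_n) \in \RR^{n-j+1}$ and $d(\G^{-j+2}(\w_{n-1}), \G^{-j+1}(\w_n)) < \ep/2^{j-1}$ from the previous stage, use \eqref{eq: d and d^s} to project $\G^{-j+1}(\w_n)$ onto $W^u(\G^{-j+2}(\w_{n-1}))$ at distance $\k\ep/2^{j-1}$, then iterate $\F^{-k}$ for $1 \leq k \leq N_0$, using the non-expansion of $d^u$ under backward geodesic flow. At each stage, a triangle-inequality chain from $\F^{-k}(\G^{-j+2}(\w_{n-1}))$ down through $\F^{-k}(\G^{-j+2}(\w_{n-2})), \ldots$ to an anchor point in $\Lambda \cap R^{(n-j+1)N_0 - k}$---with consecutive gaps bounded by $\ep/2^{j-1}, \ep/2^{j-2}, \ldots$ from the outer inductive hypothesis---produces total deviation at most $\sum_{i=1}^{j-1}\ep/2^i + \b\k\ep/2^{j-1} < \Delta$, so the defining property of $\Delta$ forces $\F^{-k}(\G^{-j+1}(\w_n))$ into $R^{(n-j+1)N_0 - k}$. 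At $k = N_0$ this reads $\G^{-j}(\w_n) \in \RR^{n-j} \cap \W^u(\w_{n-j})$, and the proximity \eqref{eq: 2} at stage $j$ then follows from Lemma \ref{lem: hyperbolicity} combined with the choice of $N_0$.

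The main obstacle is the uniform bookkeeping for this triangle-inequality estimate across every pair $(j,k)$ with $3 \leq j \leq n$ and $1 \leq k \leq N_0$: the cumulative deviation from the $\Lambda$-anchor must remain below $\Delta$ throughout. The choice $\ep < \Delta/(2(1+\b^2\k^2))$ together with the geometric decay of the successive gaps as $\ep/2^j$ was calibrated precisely so that these partial sums are dominated by a fixed convergent geometric series and never exceed $\Delta$. Once this bookkeeping is verified, both inductions close and the lemma follows; the auxiliary fact $\G(\w_n) \in \D^{n+1}$ needed to continue the outer induction is then proved exactly as in Lemma \ref{lem: w_1}.
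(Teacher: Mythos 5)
Your proposal follows the paper's proof essentially verbatim: the same double induction, the same direct local‑product‑structure argument and Lemma \ref{lemmasizeofRN} for $j\in\{0,1,2\}$, and the same $\Delta$-argument generalizing \eqref{eq: correct rectangle} for the inner inductive step. Two bookkeeping slips are worth flagging. First, the chain you write, $\F^{-k}(\G^{-j+2}(\w_{n-1})),\ \F^{-k}(\G^{-j+2}(\w_{n-2})),\ldots$, keeps the $\G$-power fixed as the $\w$-index drops; it should increase, as in $\G^{-j+2}(\w_{n-1}),\ \G^{-j+3}(\w_{n-2}),\ldots,\w_{n-j+1},\ \G(\w_{n-j})$, since the outer hypothesis \eqref{eq: 2} links $\G^{-j'}(\w_{m})$ to $\G^{-j'+1}(\w_{m-1})$. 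Second, your total deviation $\sum_{i=1}^{j-1}\ep/2^{i}+\b\k\ep/2^{j-1}$ undercounts the chain by roughly $2\ep$: it omits the terminal hop from $\w_{n-j+1}$ to $\G(\w_{n-j})$ (the $j'=0$ case of \eqref{eq: 2}, worth $\ep$) and the final hop from the $\w$-pivot $\F^{N_0-j'}(\w_{n-j})$ to the $\Lambda$-anchor $\F^{N_0-j'}(\u_{n-j})$ (worth another $\ep$ via \eqref{eq: 1}). The correct tally, as in the paper, is $\ep+\sum_{i=0}^{k-1}\ep/2^{i}+\b\k\ep/2^{k}<2\ep+\b\k\ep/2^{k}<\Delta$, which is exactly what the choice $\ep<\Delta/(2(1+\b^2\k^2))$ was calibrated for. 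These are corrections to the arithmetic, not to the method, which is identical to the paper's.
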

\begin{proof} The proof resembles that of $\w_3$ above. Proceeding as above, $\G(\w_{n-1})$, $\w_{n}$, and $\u_{n}$ belong in the same element $\D^{n}$ and that $\w_{n-1}$ and $\G^{-1}(\w_{n})$ belong in $\D^{n-1}$. Lemma \ref{lemmasizeofRN} and the choice of $N_0$ then give
$$
 d(\w_{n},\u_{n})<\ep, \text{ and } d(\G^{-j+1}(\w_{n-1}),\G^{-j}(\w_{n}))<\ep/2^j \text{ for } j \in \{0,1,2\}.
$$
This proves \eqref{eq: 2} for $j \in \{0,1,2\}$. 

For the inductive step, suppose $\G^{-k}(\w_n) \in \mathsf{R}^{n-k} \cap \W^u(\w_{n-k})$ and $d(\G^{-k+1}(\w_{n-1}),\G^{-k}(\w_n))<\ep/2^k$ for some $k\geq 2$. As in the proof of \eqref{eq: correct rectangle}, but instead using $\F^{N_0-j}(\u_{n-k-1})$ as the vector lying in $\Lambda$ and pivoting at $\F^{N_0-j}(\w_{n-k-1})$, we can show
$$\F^{-j}(\G^{-k}(\w_n))\in R^{(n-k)N_0-j} \text{ for all } 1 \leq j \leq N_0$$
by observing that $\displaystyle \ep+ \sum\limits_{i=0}^{k-1}\ep/2^i+\b\k\ep/2^k<2\ep+\b\kappa\ep/2^k<\Delta$. In particular, $\G^{-k-1}(\w_n)$ is well-defined and belongs to $\mathsf{R}^{n-k-1} \cap\W^u(\w_{n-k-1})$, and it follows from the definition of $N_0$ that $d(\G^{-k}(\w_{n-1}),\G^{-k-1}(\w_n))<\ep/2^{k+1}$, completing the inductive process.
%For $j \geq 3$, we have
%\begin{align*}
%d(\u_{n-j},\G^{-n+1}(\w_{n-1})) &\leq d(\u_{n-j},\w_{n-j})+\sum\limits_{i=0}^{j-2}d(\G^{-i}(\w_{n-j+i}),\G^{-i-1}(\w_{n-j+i+1}))\\
%&\leq \ep+\ep \sum\limits_{i=0}^{j-2} 2^{-i-1}\leq 2\ep,
%\end{align*}
%where we have used the previous inductive hypothesis in the second inequality.
%In particular, it follows from the defining property of $\Delta$ and $\ep$ that the local leaf $\W^u_{\ep/2^j}(\G^{-j+1}(\w_{n-1}))$ is entirely contained in $\mathsf{R}^{n-j}$. Then Lemma \ref{lem: simple} implies that $\G^{-j}(\w_n)$ is well-defined and belongs to $\mathsf{R}^{n-j} \cap  \W^u(\w_{n-j})$, and that $d(\G^{-j+1}(\w_{n-1}),\G^{-j}(\w_{n})) \leq \ep/2^{j}$.
%For the inductive step, suppose $\G^{-k}(\w_{n}) \in \RR^{n-k}$ and $d(\G^{-k+1}(\w_{n-1}),\G^{-k}(\w_{n}))<\ep/2^k$ for some $k\geq 2$.
%Proceeding as above, we have 
%\begin{align*}
%d(\u_{n-k-1},\G^{-k-1}(\w_{n}))&\leq d(\u_{n-k-1},\w_{n-k-1})+\sum_{i=0}^k d(\G^{-i}(\w_{n-k-1+i}),\G^{-i-1}(\w_{n-k+i}))\\
%&\leq \ep +\ep \sum\limits_{i=0}^{k-1} 2^{-i-1}+d(\G^{-k}(\w_{n-1}),\G^{-k-1}(\w_{n}))\\
%&\leq 3\ep,
%\end{align*}
%where we have used the previous inductive steps as well as $d(\G^{-k}(\w_{n-1}),\G^{-k-1}(\w_{n})) \leq \b^2 d(\G^{-k+1}(\w_{n-1}),\G^{-k}(\w_{n}))\leq \b^2\ep/2^{k}$ from \eqref{eq: almost non-increase}. Since $\u_{n-k-1} \in \Lambda$, the defining property of  $\Delta$ and $\ep$ gives
%$$\G^{-k-1}(\w_{n}) \in \RR^{n-k-1}\text{ which then implies } d(\G^{-k}(\w_{n-1}),\G^{-k-1}(\w_{n})) \leq \ep/2^{k+1}.$$
\end{proof}

As in the proof of Lemma \ref{lem: w_1} and \eqref{eq: correct rectangle}, we have $\F^j(\G^{-n}(\w_n))\in R^j$ for all $-N_0 \leq j \leq 0$ and $nN_0 \leq j \leq (n+1)N_0$. It then follows from Lemma \ref{lemmasizeofRN} and \eqref{eq: 1} that
$$d(\F^j(\G^{-n}(\w_n)),u_j) <\ep \text{ for all }0 \leq j \leq nN_0.$$

From the obtained sequence $\{\w_n\}_{n\in \N}$ satisfying \eqref{eq: 1} and \eqref{eq: 2}, we define 
$$\w:=\lim\limits_{n \to \infty} \G^{-n}(\w_n),$$
which belongs to $ \RR^0 \cap \W^u(\u_0)$.
Here the convergence of the limit is guaranteed because $\G^{-n}(\w_n)$ forms a Cauchy sequence due to \eqref{eq: 2}. Moreover, both $d(\u_0,\w)$ and $d(\G^{n}(\w),\w_n)$ are bounded above by $ \sum\limits_{i=1}^\infty 2^{-i}\ep= \ep$. From Lemma \ref{lemmasizeofRN} we then have $d(\F^j(\w),u_j)<\ep$ for all $j\in \N$.

We repeat the construction using the negative indices of $a = (a_i)_{i\in \Z}\in \mathcal{A}_{2N_0}$ and this gives us a sequence $\{\v_n\}_{n \leq 0}$ satisfying the analogous properties as $\{\w_n\}_{n \geq 0}$ with respect to $\W^s$ instead. In particular, we may define
$$\v:=\lim\limits_{n\to \infty}\G^n(\v_n),$$
which belongs to $\RR^0 \cap \W^s(\u_0)$.
Finally, the desired shadowing map $\psi$ can be defined as
$$\psi(a) := [\v,\w]_\C.$$
See Figure \ref{fig: 3}.

\begin{figure}[h]\label{fig: 3}
  \centering
    \includegraphics[width=0.75\textwidth]{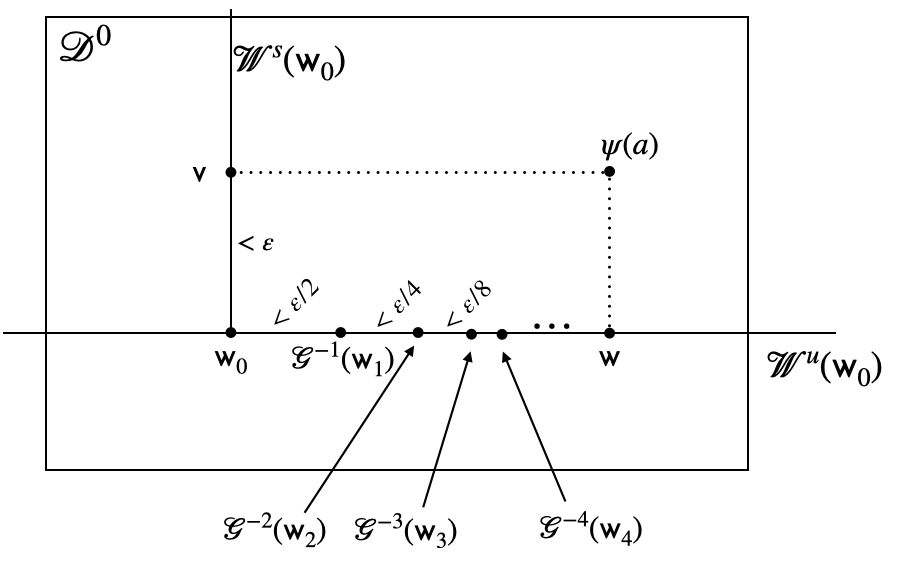}
    \caption{}
\end{figure}

Since $d(\u_0,\w)<\ep$ and $d(\u_0,\v)<\ep$, we have $d(\w,\v)<2\ep$ and hence $d(\w,\psi(a)) \leq 2\b\k\ep$ from \eqref{eq: d relation}.
%For each $n \in \N$, the distance between $\u_n$ and $\G^n(\w)$ is bounded above by $2\ep$. 
From \eqref{eq: almost non-increase}, we have $d(\F^j(\w),\F^j(\psi(a)))\leq \b\k d(\w,\psi(a)) \leq 2\b^2\k^2\ep$ for all $j\in \N$,
and hence,
$$d(u_j,\F^j(\psi(a))) \leq d(u_j,\F^j(\w))+d(\F^j(\w),\F^j(\psi(a)) \leq \ep+2\b^2\k^2\ep.$$
%\textcolor{red}{In the last term we have $\ep+2\b^2\kappa^2\ep$} 
Since $u_j\in \Lambda$ and $\ep+2\b^2\k^2\ep <\Delta$, we have $\F^j(\psi(a)) \in R^j$ for all $j\in \N$. 
The analogous inequality for the negative indices $n\leq 0$ can similarly be verified using $\v_n$ and $\G^n(\v)$, and this shows that $\F^j(\psi(a))\in R^j$ for all $j\in \Z$. Then it follows from Lemma \ref{lemmasizeofRN} that $\psi(a)$ indeed $\ep$-shadows $a$ with respect to $\F$, as required.

The well-definedness and injectiveness of $\psi$ can also easily be verified. For well-definedness, if we had used a different sequence $\{u_i'\}_{i\in \Z}$ for $a = (a_i)_{i\in \Z}$ and obtained $\psi(a)' \in \RR^0$, then both $\G^n(\psi(a))$ and $\G^n(\psi(a)')$ belong to $\RR^n$ for all $n\in \Z$, and this implies that $\psi(a)=\psi(a)'$ from Lemma \ref{lemmasizeofRN}. The injectivity of the map $\psi$ follows because the orbit of $\psi(a)$ can conversely be used to determine $a$. 
This completes the proof of Proposition \ref{prop: psi injective}.

\subsection{Completing the proof of Proposition \ref{prop: key}}
Using $\psi$, we construct $\wt\Lambda$ as follows:
$$ \wt\Lambda:=\bigcup_{t\in \mathbb{R}}g_t(\psi(\mathcal{A}_{2N_0})).$$ 
We claim that $\wt\Lambda$ is the desired set satisfying the statements of Proposition \ref{prop: key}. Indeed, every vector in $\Lambda$ is contained in $\mathcal{A}_{2N_0}$ and mapped to itself by $\psi$, so $\Lambda\subset \wt\Lambda$. From its construction, $\wt\Lambda$ is $G$-invariant and compact as it is the image of a compact set under the continuous map $\psi$. Moreover, $\wt\Lambda$ is uniformly hyperbolic as it contained in $ U$, and it is locally maximal because it inherits the local product structure of $\mathcal{A}_{2N_0}$. This completes the proof of Proposition \ref{prop: key}.

\bibliographystyle{amsalpha}
\bibliography{bg_generalize}
\end{document}